\theoremstyle{plain}
\def\endproof{\hspace*{\fill}\mbox{\ \rule{.1in}{.1in}}\medskip }
\newtheorem{theorem}{Theorem}[section]
\newtheorem{corollary}[theorem]{Corollary}
\newtheorem{lemma}[theorem]{Lemma}
\newtheorem{proposition}[theorem]{Proposition}
\theoremstyle{definition}
\newtheorem{remark}[theorem]{Remark}
\newcommand{\R}{\mathbb{R}} 
\newcommand{\bee}{\begin{equation}}
\newcommand{\eee}{\end{equation}} 
\newcommand{\bees}{\begin{equation*}}
\newcommand{\eees}{\end{equation*}} 
\newcommand{\ds}{\displaystyle} 
\numberwithin{equation}{section}
\numberwithin{figure}{section}
\def\bees{\begin{equation*}}
\def\eees{\end{equation*}}
\def\bee{\begin{equation}}
\def\eee{\end{equation}}
\numberwithin{equation}{section}
\numberwithin{figure}{section}
\def\ds{\displaystyle}
\def\Det{{\mathcal{D}et}\,}
\def\cc{{\rm curl}~ {\rm curl}\,} 
\def \sym{{\rm sym}\,} 
\def\R{{\mathbb R}}
\definecolor{Green}{rgb}{0, 0.65,0}
\begin{document}

\title[Convex integration for the Monge-Amp\`ere equation]
{Convex integration for the Monge-Amp\`ere equation \\ in two dimensions} 
\author{Marta Lewicka and Mohammad Reza Pakzad}
\address{Marta Lewicka and Mohammad Reza Pakzad, University of Pittsburgh, Department of Mathematics, 
139 University Place, Pittsburgh, PA 15260}
\email{lewicka@pitt.edu, pakzad@pitt.edu} 
\subjclass[2000]{}
\keywords{}

%\date{August 6, 2015}

\begin{abstract} 
This paper concerns the questions of flexibility and rigidity
of solutions  to the Monge-Amp\`ere equation which arises as a
natural geometrical constraint in prestrained nonlinear elasticity. In
particular, we focus on anomalous i.e. ``flexible''
weak solutions that  can be constructed through methods of convex
integration \`a la Nash \& Kuiper and establish the related
$h$-principle for the Monge-Amp\`ere equation in two dimensions.
\end{abstract}

\maketitle
\tableofcontents

\section{Introduction.}\label{intro}

In this paper we study the $\mathcal{C}^{1,\alpha}$ solutions to
the Monge-Amp\`ere equation in two dimensions: 
\bee \label{MA}
\Det \nabla^2 v  := -\frac 12 \cc  (\nabla v \otimes \nabla v) =  f
\qquad \mbox{in } \Omega\subset\R^2.
\eee 
Our results concern the dichotomy of ``rigidity vs. flexibility'', in
the spirit of the analogous results and techniques appearing in the
contexts of: the low co-dimension isometric immersion problem \cite{Nash2, kuiper,
  bori1, bori2, CDS}, and the Onsager's conjecture for
Euler equations \cite{Sz, DS0, DS, CoTi,  eyink}. 

In the first, main part of the paper we show that below the
regularity threshold $\alpha<1/7$,  the very weak
$\mathcal{C}^{1,\alpha}(\bar\Omega)$ solutions to
(\ref{MA}) as defined below, are dense in the set
of all continuous functions (see Theorems \ref{weakMA} and
\ref{w2oi-hld}). These flexibility statements are a consequence of 
the convex integration $h$-principle, that is a method 
proposed in \cite{gromov} for solving certain partial differential   
relations and that turns out to be applicable to our setting of the Monge-Amp\`ere
equation as well. Here, we directly adapt the
iteration method of Nash and Kuiper \cite{Nash2, kuiper}, in order to construct the
oscillatory solutions to (\ref{MA}). \footnote{We remark that the
recent work of De Lellis, Inaunen and Szekelyhidi \cite{1/5} showed
that the flexibility exponent $\frac{1}{7}$ can be improved to
$\frac{1}{5}$ in the case of the isometric immersion problem in $2$
dimensions. We expect similar improvement to be possible also in the
present case of equation (\ref{MA}); this will be investigated in
our future work.}

In the second part of the paper we prove that the same class of very
weak solutions fails the above flexibility in the regularity regime
$\alpha>2/3$. Our results are parallel with those concerning  
isometric immersions \cite{bori1, CDS, Pak}, Euler
equations \cite{CoTi, eyink}, Perona-Malik equation \cite{PM1, PM2},
the active scalar equation \cite{IV},
and should also be compared with results on the regularity of Sobolev solutions to the
Monge-Amp\`ere equation \cite{Pak, Sve, LMP, JP} whose study is important
in the context of nonlinear elasticity and with the rigidity results
for the Monge-Amp\`ere functions \cite{Je1, J2010}.

\subsection{The weak determinant Hessian.}
Let $\Omega\subset \R^2$ be an open set.  
Given a function $v\in W^{1,2}_{loc}(\Omega)$, we define its very weak Hessian
(denoted by ${\mathcal H^\ast_2}$ in \cite{Iwa, FM}) as: 
$$ \Det \nabla^2v = -\frac 12 \cc  (\nabla v \otimes \nabla v), $$
understood in the sense of distributions.
A straightforward approximation argument shows that if $v\in W^{2,2}_{loc}$ then $ L^1_{loc} (\Omega) \ni \Det
\nabla^2 v = \det \nabla^2 v$ a.e. in $\Omega$, 
where $\nabla^2 v$ stands for the Hessian matrix field of $v$.  
We also remark that this notion of the very weak Hessian is distinct from the distributional Hessian ${\rm Det}
\nabla^2 v= {\rm Det}  \nabla (\nabla v)$ (denoted by ${\mathcal H}u$
in \cite{Iwa, FM}), that is defined through the distributional determinant ${\rm Det}$:
$$ {\rm Det} \nabla\psi = -\mbox{div } \big(\psi_2\nabla^\perp\psi_1\big)
= \partial_2 (\psi_2 \partial_1 \psi_1)  -\partial_1
(\psi_2 \partial_2 \psi_1) \quad \mbox{ for}   \quad \psi=(\psi_1,
\psi_2)\in W^{1, 4/3}(\Omega,\R^2).$$ 
Contrary to the distributional Hessian, the
very weak Hessian is not continuous with respect to the weak topology. Indeed,
an example of a sequence $v_n \in W^{1,2}(\Omega)$  is constructed in
\cite{Iwa}, where $\Det \nabla^2 v = -1$ while $v_n$ converges weakly to $0$.
One consequence of the proof of our Theorem \ref{weakMA} below is that
$\Det \nabla^2$ is actually weakly discontinuous everywhere in
$W^{1,2}(\Omega)$ (see Corollary \ref{ss}). 

\medskip
 
Here is our first main result:

\begin{theorem}\label{weakMA} 
Let $f \in L^{7/6} (\Omega)$ on an open, bounded, simply connected
$\Omega\subset\mathbb{R}^2$.  Fix an exponent:
$$\alpha<\frac{1}{7}.$$ 
Then the set of $\mathcal{\mathcal{C}}^{1,\alpha}(\bar\Omega)$ solutions to
(\ref{MA}) is dense in the space $\mathcal{\mathcal{C}}^0(\bar\Omega)$. 
More precisely, for every $v_0\in \mathcal{\mathcal{C}}^0(\bar\Omega)$ there exists a sequence
$v_n\in\mathcal{\mathcal{C}}^{1,\alpha}(\bar\Omega)$, converging uniformly to $v_0$ and satisfying:
\bee \label{MA2}
\Det \nabla^2 v_n  = f \quad \mbox{ in } \Omega.
\eee 
When $f\in L^p(\Omega)$ and $p\in (1,\frac{7}{6})$, the same result is true
for any $\alpha<1- \frac{1}{p}$.
\end{theorem}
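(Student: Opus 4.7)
My first step is to convert \eqref{MA2} into a ``metric matching'' equation amenable to Nash--Kuiper iteration. Given $f\in L^p(\Omega)$, I choose a symmetric matrix field $A\in W^{2,p}(\Omega,\R^{2\times 2}_{\mathrm{sym}})$ with $-\frac{1}{2}\cc A = f$ (e.g.\ solve $-\frac{1}{2}\Delta\varphi = f$ and take $A = \varphi\,\mathrm{Id}$, or use a potential specific to the curl\,curl operator). On the simply connected $\Omega$, Saint-Venant's compatibility then reduces \eqref{MA2} to the problem of finding $v\in\mathcal C^{1,\alpha}(\bar\Omega)$ and an auxiliary field $w\in\mathcal C^{1,\alpha}(\bar\Omega,\R^2)$ such that
\[
\tfrac{1}{2}\nabla v\otimes\nabla v \,+\, \sym\nabla w \;=\; \tfrac{1}{2}A \quad\text{in }\Omega,
\]
since the kernel of $\cc$ on symmetric fields is exactly $\sym\nabla$. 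This is the precise analogue of the 2D-into-3D isometric embedding problem, where $v$ and $w$ play the role of the two coordinate functions of an immersion and $A$ is the prescribed metric deficit.

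\textbf{Subsolutions and the corrugation step.}
I call $(v,w)$ a \emph{subsolution} if the deficit $D := \tfrac12 A - \tfrac12\nabla v\otimes\nabla v - \sym\nabla w$ is positive definite and smooth. Starting from a uniform approximation $\tilde v$ of the given $v_0$, I can construct an initial subsolution by choosing $w$ to balance a prescribed small constant deficit. The heart of the argument is the one-step \emph{corrugation}: given a subsolution with deficit of the form $a(x)^2\,\xi\otimes\xi$ and a large frequency $\lambda$, define
\[
\tilde v := v + \tfrac{a(x)}{\lambda}\,\Gamma(\lambda\,x\!\cdot\!\xi), \qquad
\tilde w := w - \tfrac{a(x)^2}{2\lambda}\,\gamma(\lambda\,x\!\cdot\!\xi)\,\xi \;+\; (\text{lower order}),
\]
with $\Gamma,\gamma$ periodic profiles satisfying $\tfrac12(\Gamma')^2+\gamma' = 1$ and $\int\gamma = 0$. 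A direct computation shows that $\tfrac12\nabla\tilde v\otimes\nabla\tilde v+\sym\nabla\tilde w$ reproduces $\tfrac12\nabla v\otimes\nabla v+\sym\nabla w + a^2\,\xi\otimes\xi$ modulo errors of order $\|a\|_{\mathcal C^1}/\lambda$. Since in dimension two every positive definite $D$ admits a smooth decomposition $D = \sum_{k=1}^{3}a_k^2\,\xi_k\otimes\xi_k$ with three fixed directions $\xi_k$, a \emph{stage} is obtained by composing three steps, reducing the deficit essentially to zero up to errors controlled by the chosen frequencies.

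\textbf{Iteration and H\"older estimates.}
I then iterate stages $(v_n,w_n)\to(v_{n+1},w_{n+1})$ with a doubly exponential schedule of frequencies $\lambda_n$, amplitudes $\delta_n$ and deficits $\|D_n\|_0\lesssim\delta_{n+1}$. The bookkeeping tracks three norms: the $\mathcal C^0$ increment $\lesssim\delta_n^{1/2}/\lambda_n$, the $\mathcal C^1$ increment $\lesssim\delta_n^{1/2}$, and the $\mathcal C^2$ bound $\lesssim\delta_n^{1/2}\lambda_n$. Summability of the $\mathcal C^0$ series gives uniform convergence $v_n\to v_\infty$, vanishing of $D_n$ yields $\Det\nabla^2 v_\infty = f$, while the standard Nash--Kuiper interpolation $\|v\|_{\mathcal C^{1,\alpha}}\lesssim \|v\|_{\mathcal C^1}^{1-\alpha}\|v\|_{\mathcal C^2}^{\alpha}$ applied between successive stages forces the relation
\[
\delta_{n+1}\lambda_{n+1}^{-2}\;\sim\;\delta_n\lambda_n^{-2\alpha/(1-\alpha)}\cdot\lambda_n^{-2}.
\]
Balancing amplitudes and frequencies through the three-step stage, and requiring the stage-to-stage map to be contracting in the relevant norm, gives a convergent iteration exactly for $\alpha<1/7$, which is where I expect the main technical fight and the exponent $1/7$ emerges.

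\textbf{The $L^p$ case and main obstacle.}
For $f\in L^p$ with $1<p<7/6$, the field $A$ is only $W^{2,p}$, so I mollify $A$ at each stage on a length scale commensurate with $\lambda_n^{-1}$ and treat the mollification error as an extra contribution to the deficit; Calder\'on--Zygmund type bounds yield $\|A - A_{\ell}\|_0\lesssim \ell^{2-2/p}\|f\|_{L^p}$, and matching this against $\delta_{n+1}$ shifts the admissible exponent to $\alpha<1-1/p$. The main obstacle I anticipate is the simultaneous control of the $\mathcal C^2$ norms of $v_n,w_n$ across a full stage (since these grow with $\lambda_n$ and feed into the next step through the amplitudes $a_k$), together with ensuring that the decomposition $D=\sum a_k^2\,\xi_k\otimes\xi_k$ can be chosen smoothly with the desired $\mathcal C^k$ bounds in terms of $D$ itself, uniformly as the deficit shrinks. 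This is the delicate bookkeeping that ultimately pins down the exponent $1/7$.
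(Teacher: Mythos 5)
Your overall strategy mirrors the paper's: recast \eqref{MA2} as $\frac{1}{2}\nabla v\otimes\nabla v+\sym\nabla w = A$ with $-\cc A=f$, run a Nash--Kuiper iteration driven by a three-direction primitive decomposition of the deficit, and extract $\mathcal{C}^{1,\alpha}$ convergence by interpolating between $\mathcal{C}^1$ and $\mathcal{C}^2$; the reduction step is exactly how the paper derives Theorem~\ref{weakMA} from Theorem~\ref{w2oi-hld}. Three points in your iteration sketch, however, would fail as written and are precisely what the paper's construction is organized around.

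First, the correction $-\frac{1}{\lambda}\Gamma_1(x,\lambda x\cdot\xi)\nabla v(x)$ in $\tilde w$ is not \lq\lq lower order'': it is what cancels the $O(1)$ cross-term $\partial_t\Gamma_1\,\sym\big(\xi\otimes\nabla v\big)$ produced by $\frac{1}{2}\nabla\tilde v\otimes\nabla\tilde v$, and dropping it leaves an uncontrolled error of size $\sim\|a\|_0\|\nabla v\|_0$ which does not vanish as $\lambda\to\infty$ (compare the boxed cancellations in the proof of Proposition~\ref{step}). Second, it is false that every positive definite $D$ decomposes as $\sum_{k=1}^{3}a_k^2\,\xi_k\otimes\xi_k$ with three \emph{fixed} directions and nonnegative coefficients: the cone spanned by three rank-one matrices is simplicial and does not contain the full round positive-definite cone, and if instead you let $\xi_k$ track $D$ you lose the needed smoothness. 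The paper circumvents this by shifting $w$ by a multiple of $\mathrm{Id}_2$ so that the rescaled deficit sits in a fixed small ball around $\mathrm{Id}_2$ where Lemma~\ref{decomposeId} applies (step~2 of Proposition~\ref{stg-hld}). Third, mollifying $v,w,A$ at scale $\ell\sim\lambda^{-1}$ is not enough: at that scale the $\ell^{-1}$ growth of derivatives exactly offsets the $\lambda^{-1}$ gain in the step estimate and the deficit does not decrease. The paper mollifies at the strictly larger scale $\ell=\|\mathcal{D}\|_0^{1/2}/M>\lambda^{-1}$, and it is the interplay of this $\ell$ with the geometric parameter $\sigma$ and the $\mathcal{C}^2$ growth $\|\tilde v\|_2\lesssim M\sigma^3$ that yields the condition $\alpha(6+s)<s$ with $s<\min\{1,\frac{6\beta}{2-\beta}\}$, hence $\alpha<\min\{\frac17,\frac{\beta}{2}\}$. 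Your $L^p$ conclusion follows once this is set up, since $A\in W^{2,p}$ gives $\beta=2-2/p$ via Morrey, matching $\alpha<1-1/p$.
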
 
 
In order to better understand Theorem \ref{weakMA}, we point out a
connection between the solutions to \eqref{MA} and the  
isometric immersions of Riemannian metrics, motivated 
by a study of nonlinear elastic plates. 
Since on a simply connected domain $\Omega$, the kernel of the differential operator $\cc$ consists 
of the fields of the form $\sym \nabla w$, a solution to \eqref{MA} with the vanishing right
hand side $f\equiv 0$ can be characterized by the criterion: 
\begin{equation}\label{2nd}
\exists w:\Omega \to \R^2 \qquad \frac 12 \nabla v \otimes \nabla v +
\sym \nabla w =0 \quad \mbox{in} \,\, \Omega.
\end{equation}  
The equation in (\ref{2nd}) can be seen as an equivalent condition for
the following $1$-parameter family of
deformations, given through the out-of-plane displacement $v$ and
the in-plane displacement $w$
(albeit with different orders of magnitude $\varepsilon$ and  $\varepsilon^2$):
$$ \phi_\varepsilon= {id} + \varepsilon v e_3 + \varepsilon^2 w: \Omega\to\R^3 $$ 
to form a  $2$nd order infinitesimal isometry (bending), i.e. to induce the change of
 metric on the plate $\Omega$ whose $2$nd order terms in $\varepsilon$ disappear:
$$(\nabla \phi_\varepsilon)^T \nabla\phi_\varepsilon - \mbox{Id}_2 = o(\varepsilon^2).$$ 

In this context, we take the cue about Theorem
\ref{weakMA} from the celebrated work of Nash and Kuiper \cite{Nash2,
  kuiper}, where they show the density of co-dimension one
$\mathcal{C}^1$ isometric immersions of Riemannian manifolds in the
set of short mappings. Since we are are now dealing with the $2$nd order
infinitesimal isometries rather than the exact isometries, the
classical metric pull-back equation:  
$$ y^\ast g_e = h,   $$ 
for a mapping $y$ from $(\Omega,h)$ into $\R^3$ equipped with 
the standard Euclidean metric $g_e$,  is replaced by the 
compatibility equation of the tensor $T(v,w)= \frac 12 \nabla v \otimes
\nabla v + \sym \nabla w$ with a matrix field $A_0$ that satisfies: $ - \cc A_0 =f$:
\begin{equation}\label{1.4} 
T(v,w) = A_0  
\end{equation}
Note that there are many potential choices for $A_0$, for example one
may take $A_0(x) = \lambda(x) \mbox{Id}_2$ with $\Delta\lambda = -f$ in $\Omega$.
Again, equation (\ref{1.4}) states precisely that the metric
$(\nabla\phi_\varepsilon)^T\nabla\phi_\varepsilon$ agrees with the
given metric $h=\mbox{Id}_2 + 2\varepsilon^2 A_0$ on $\Omega$, up to
terms of order $\varepsilon^2$. The Gauss curvature $\kappa$ of the metric $h$
satisfies:
$$\kappa(h) = \kappa(\mbox{Id}_2 + 2\varepsilon^2 A_0) =
-\varepsilon^2 \cc A_0 + o(\varepsilon^2),$$
while $\kappa((\nabla\phi_\varepsilon)^T\nabla\phi_\varepsilon) =
-\varepsilon^2\cc\big(\frac{1}{2}\nabla v\otimes\nabla v + \sym w\big) +
o(\varepsilon^2)$, so the problem (\ref{MA}) can also be interpreted as
seeking for all appropriately regular out-of-plane displacements $v$
that can be matched, by a higher order in-plane displacement perturbation $w$, to
achieve the prescribed Gauss curvature $f$ of $\Omega$, at its highest
order term.

In this paper, similarly as in the isometric immersion case, we show that solutions to
(\ref{1.4}) are ample. We design a scheme inspired by the work
of Nash and Kuiper, which pushes a ``short infinitesimal isometry'',
i.e. a couple $(v_0, w_0)$ such that $T(v_0, w_0) <A_0$, 
towards an exact  solution to (\ref{1.4}) in successive
small steps.   Note that both $y^\ast g_e = (\nabla y)^T \nabla y$ and
the term $\nabla v \otimes \nabla v$ in $T(v,w)$
have a quadratic structure, which is crucial  in the
analysis of \cite{Nash2, kuiper} and also of this paper. Here, not only the presence of the linear term
$\sym \nabla w$ in $T(u,w)$ does not destroy the adaptation of the
Nash-Kuiper scheme, but it actually allows for this construction to work.

\subsection{Convex integration for the Monge-Amp\`ere equation in two dimensions.}
 
As we will see in section \ref{C10}, Theorem \ref{weakMA} follows
easily from the statement of our next main result:
 
\begin{theorem}\label {w2oi-hld}
Let $\Omega\subset\mathbb{R}^2$ be an open and bounded domain. Let
$v_0\in\mathcal{C}^1(\bar\Omega)$, $w_0\in\mathcal{C}^1(\bar\Omega,\mathbb{R}^2)$ and $A_0 \in
\mathcal{C}^{0,\beta} (\bar \Omega,  \R^{2\times 2}_{sym})$, for some $\beta\in (0,1)$, be such that: 
\bee\label{wA-inequ-holder}
\exists c_0>0 \qquad A_0 - \big(\frac 12 \nabla v_0
\otimes \nabla v_0 + \sym \nabla w_0\big)  > c_0 {\rm Id}_2 \qquad \mbox{in } \bar\Omega.
\eee 
Then, for every exponent $\alpha$ in the range:
$$0<\alpha < \min\Big\{\frac{1}{7}, \frac{\beta}{2}\Big\},$$ 
there exist sequences $v_n \in \mathcal{C}^{1,\alpha}(\bar \Omega)$ 
and $w_n \in \mathcal{C}^{1,\alpha} (\bar \Omega,\R^2)$ which converge
uniformly to $v_0$ and $w_0$, respectively, and which satisfy:
\bee\label{2oi-holder} 
A_0 = \frac 12 \nabla v_n \otimes \nabla v_n + \sym \nabla w_n \qquad
\mbox{in } \bar\Omega.
\eee
\end{theorem}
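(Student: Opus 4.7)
The plan is to implement a Nash--Kuiper style convex integration scheme adapted to (\ref{2oi-holder}). I view a pair $(v,w)$ as an approximate solution with deficit
$$D(v,w) := A_0 - \Big(\frac{1}{2}\nabla v\otimes\nabla v + \sym\nabla w\Big),$$
and iteratively construct $(v_n,w_n)$ so that $D(v_n,w_n)\to 0$ uniformly while $(v_n,w_n)$ converges in $\mathcal{C}^{1,\alpha}$. The basic building block (a \emph{primitive step}, or Nash twist) cancels one rank-one symmetric component $a^2\xi\otimes\xi$ of the deficit by means of a high-frequency corrugation: given $\xi\in\mathbb{S}^1$, amplitude $a$ and a large frequency $\lambda$, set
$$\tilde v = v + \frac{a}{\lambda}\Gamma_1(\lambda\xi\cdot x), \qquad \tilde w = w - \frac{a}{\lambda}\Gamma_1(\lambda\xi\cdot x)\,\nabla v + \frac{a^2}{\lambda}\Gamma_2(\lambda\xi\cdot x)\,\xi,$$
with $\Gamma_1,\Gamma_2$ mean-zero $1$-periodic profiles chosen so that $\int_0^1(\Gamma_1')^2=1$ and $2\Gamma_2'=1-(\Gamma_1')^2$. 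A direct expansion then yields
$$\tfrac{1}{2}\nabla\tilde v\otimes\nabla\tilde v + \sym\nabla\tilde w = \tfrac{1}{2}\nabla v\otimes\nabla v + \sym\nabla w + \tfrac{a^2}{2}\,\xi\otimes\xi + O(\lambda^{-1}),$$
the crucial point being that the cross term $a\Gamma_1'\sym(\nabla v\otimes\xi)$ arising in $\frac{1}{2}\nabla\tilde v\otimes\nabla\tilde v$, which would constitute an obstruction in the pure isometric immersion setting, is here absorbed linearly via the perturbation of $w$.

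Next, one \emph{stage} of the scheme is organized as follows. Given a short pair $(v,w)$ with $D(v,w)>c\,\mathrm{Id}_2$, I decompose (a mollified version of) $D$ as a sum of at most three rank-one tensors $\sum_{k=1}^3 a_k(x)^2\,\xi_k\otimes\xi_k$ with smooth coefficients, via the standard partition-of-unity argument in the cone of positive definite $2\times 2$ matrices. After mollifying $(v,w,A_0)$ at an appropriate length scale $\ell$, I apply the primitive step three times in succession at rapidly increasing frequencies $\lambda_1\ll\lambda_2\ll\lambda_3$, producing a new pair $(\tilde v,\tilde w)$ whose deficit from $A_0$ has been reduced by a prescribed factor. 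Iterating this stage with geometrically decaying deficits $\delta_n\to 0$ and geometrically increasing frequencies $\lambda_n\to\infty$ yields the sequence $(v_n,w_n)$, and the interpolation $\|\cdot\|_{\mathcal{C}^{1,\alpha}}\lesssim\|\cdot\|_{\mathcal{C}^1}^{1-\alpha}\|\cdot\|_{\mathcal{C}^2}^{\alpha}$ converts $\mathcal{C}^0$-summability and controlled $\mathcal{C}^2$-growth into convergence in $\mathcal{C}^{1,\alpha}$.

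The main obstacle, and the point where the threshold $\alpha<1/7$ arises, is the precise balance of the parameters $(\ell_n,\lambda_n,\delta_n)$ across stages. One must simultaneously ensure that the $\mathcal{C}^0$-corrections per stage are summable, the $\mathcal{C}^1$-norms stay uniformly bounded (and their corrections small), the $\mathcal{C}^2$-norms grow only in the quantitative way tied to $\delta_n$ that the interpolation inequality demands, and the residual deficit after one stage is controlled simultaneously by $\lambda_n^{-1}$ (corrugation error) and by $\ell_n^\beta$ (the Hölder mollification error of $A_0$). The three corrugations per stage, each contributing a $\lambda^{1/2}$-type factor to the $\mathcal{C}^2$-norm, are what pin the achievable exponent at $\alpha<1/7$; the secondary bound $\alpha<\beta/2$ enters through the mollification of $A_0$, via the optimal choice of $\ell_n$ versus $\lambda_n$. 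The technical heart of the proof is the delicate bookkeeping of how the second and third corrugations within a single stage interact with the previously deposited (non-mollified) perturbations, so that the $O(\lambda^{-1})$ error terms do not blow up and the stage estimates close inductively.
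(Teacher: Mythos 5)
Your outline matches the paper's strategy at the level of the primitive corrugation (the Nash twist with the linear absorption of the cross term into $\sym\nabla w$) and at the level of the interpolation and parameter bookkeeping; in those respects you have reconstructed the paper's scheme. However, two concrete ingredients that make the $\mathcal{C}^{1,\alpha}$ iteration actually close are missing from the sketch, and both are load-bearing.

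First, the stage estimates of the $\mathcal{C}^{1,\alpha}$ scheme (the paper's Proposition \ref{stg-hld} / Theorem \ref{Holderdelta_01}) require the input deficit to satisfy $\|\mathcal{D}\|_0<\delta_0\ll1$, not merely $\mathcal{D}>c\,\mathrm{Id}_2$. The hypothesis (\ref{wA-inequ-holder}) gives only positive definiteness and no smallness, so you cannot launch the quantitative $\mathcal{C}^{1,\alpha}$ stage directly from $(v_0,w_0)$. The paper resolves this by a two-phase argument: it first runs the soft $\mathcal{C}^1$ Nash--Kuiper scheme (Theorem \ref{weak2oi}) to produce a $\mathcal{C}^1$ pair $(v,w)$ whose deficit is arbitrarily small, then approximates that pair in $\mathcal{C}^1$ by a $\mathcal{C}^2$ pair $(\tilde v,\tilde w)$ paying a negligible extra deficit, and only then starts the quantitative iteration. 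Without this reduction, the inductive bounds $\delta_k<1$ and the choice of the mollification scale $l=\|\mathcal D\|_0^{1/2}/M$ do not even make sense for a deficit of order one.

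Second, the decomposition of the mollified deficit into exactly three rank-one terms is \emph{not} obtained from the partition-of-unity construction in the cone of positive matrices. That construction (Lemma \ref{decompose}) yields a locally finite covering and a decomposition with up to $N_0$ simultaneously active terms; it is fine for the $\mathcal{C}^1$ scheme but destroys the exponent count that leads to $1/7$. Moreover, after mollification the induced deficit $\mathfrak{D}$ is small but need not be positive definite at all, so one cannot appeal to any decomposition in the positive cone directly. The paper instead exploits the linear $\sym\nabla w$ freedom again: it replaces $\mathfrak{w}$ by $\mathfrak{w}'=\mathfrak{w}-2\frac{\|\mathfrak D\|_0+\|\mathcal D\|_0}{r_0}\,\mathrm{id}_2$, which shifts the deficit to $\mathfrak{D}'=\mathfrak{D}+2\frac{\|\mathfrak D\|_0+\|\mathcal D\|_0}{r_0}\mathrm{Id}_2$, a field that after rescaling lies in a fixed small ball around $\mathrm{Id}_2$. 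There the single, global three-term decomposition of Lemma \ref{decomposeId} applies with uniform constants, giving $\mathfrak{D}'=\sum_{k=1}^3 a_k^2\,\xi_k\otimes\xi_k$ with $\|a_k\|_m\lesssim l^{-m}\|\mathcal D\|_0^{1/2}$. This shift trick, which has no counterpart in the isometric embedding problem and is special to the Monge--Amp\`ere structure, is the reason the exponent count is pinned at three and not at $N_0$. As stated, your sketch attributes the $3$ to the wrong mechanism and would, if followed literally, either produce too many terms or fail on a non-positive mollified deficit.
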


The above result is the Monge-Amp\`ere analogue of  {\cite[Theorem
  1]{CDS}}, where the authors improved on the Nash-Kuiper
method to obtain higher regularity within the flexibility regime.  
In our paper, we adapt the same methods to our problem. 

Convex integration was originally developed by Gromov \cite{gromov} to deal
with finding weak solutions of a differential inclusion $Lu(x)\in
K$  in $\Omega$, by  investigating certain classes of sub-solutions,  
e.g. functions $u$ that satisfy $Lu(x)\in \mbox{conv } K$ where the
original constraint set $K$ is replaced by its
appropriate convex hull $\mbox{conv } K$. Under specific circumstances, it leads to
establishing the density of very weak
solutions, satisfying $Lu \in L^\infty(\Omega)$, in the set of sub-solutions, and in case
the constraint set is a continuum the regularity might be improved to $Lu \in \mathcal{C}^0(\Omega)$.  

Recently, these methods were applied in the context of fluid dynamics
and yielded many  interesting results for  the Euler equations. In
\cite{DS0},  De Lellis and Sz\'ekelyhidi proved existence of weak solutions with bounded
velocity and pressure, their
non-uniqueness and the existence of energy-decreasing solutions. In \cite{DS}, using
iteration methods \`a la Nash-Kuiper, the same authors proved existence of
continuous periodic solutions of 
the $3$-dimensional incompressible Euler equations, which dissipate the total kinetic
energy.  These results are to be contrasted with \cite{CoTi,
  eyink}, where it was shown that $\mathcal{C}^{0,\alpha}$  
solutions of the Euler equations are energy conservative if
$\alpha>1/3$. There have been several improvements of 
\cite{DS0, DS} since, towards a possible proof of the Onsager's
conjecture which puts the H\"older regularity threshold for the energy conservation of
the weak solutions to the Euler equations at $\mathcal{C}^{0,1/3}$
\cite{Is1, Is2, B1, B2, B3, C2}. The stationary incompressible Euler equation has 
been studied in \cite{C2} where the existence of
bounded anomalous solutions have been proved. The authors indicate
that in $2$ dimensions, 
the relaxation set corresponding to the appropriate subsolutions is
smaller than in the case of the evolutionary equations. In this
context, we noticed a connection between our reformulation of the Monge-Amp\`ere equation
and the steady state Euler equation, which lead to our modest  Corollary \ref{corfluid}.

In this paper we use a direct iteration method to construct exact solutions of  (\ref{MA}). The
re-casting of the statement and the proof in the language of convex
integration  might shed more light on the structure of the
Monge-Amp\`ere equation, but it would not improve
the results and therefore we do not address this task. We note, however, that constructing Lipschitz
continuous piecewise affine approximating solutions  to
(\ref{2oi-holder})  for $A_0 \equiv 0$ is quite straightforward and
could be used to prove a convex integration density result via
the Baire category method as was done in \cite{DS0} for the Euler
equations (see also Figure \ref {figpar} and the corresponding explanation).

\subsection{Rigidity versus flexibility.}

The flexibility results obtained in view of the $h$-principle are usually coupled with the rigidity
results for more regular solutions. Rigidity of isometric
immersions of elliptic metrics for $\mathcal{C}^{1,\alpha}$ isometries
\cite{bori1, DS0} with $\alpha>2/3$, or the energy conservation of weak solutions of the
Euler equations for $\mathcal{C}^{0,\alpha}$ solutions
with $\alpha>1/3$, are results of this type.
For the Monge-Amp\`ere equations, we recall two recent statements regarding
solutions with Sobolev regularity:
following the well known unpublished work by \v{S}ver\'ak \cite{Sve}, we proved in
\cite{LMP} that if $v\in W^{2,2}(\Omega)$ is a 
solution to (\ref{MA}) with $f\in L^1(\Omega)$ and $f\geq c>0$ in
$\Omega$, then in fact $v$ must be $\mathcal{C}^1$ and globally convex (or concave). 
On the other hand, if $f=0$ then
\cite{Pak} likewise $v\in\mathcal{C}^1(\Omega)$ and $v$ must be
developable (see also \cite{Je1, J2010, JP}). A clear  {statement
  of rigidity} is still lacking for the general $f$, as is the case for isometric immersions, where rigidity
results are usually formulated only for elliptic \cite{CDS} or Euclidean metrics \cite{Pak, LP, JP}.  

In this paper, we prove the rigidity properties of solutions to
(\ref{MA}) in the H\"older regularity context when $f\equiv 0$. Namely, we prove:

\begin{theorem}\label{rig1} 
Let $\Omega\subset\R^2$ be an open, bounded domain and let:
$$\frac{2}{3}<\alpha<1.$$
If $v\in\mathcal{C}^{1,\alpha}(\bar\Omega)$ is a solution to $\Det
\nabla^2 v = 0$ in $\bar\Omega$, then $v$ must be developable. More
precisely, for all $x\in\Omega$ either $v$ is affine in a
neighbourhood of $x$, or there exists a segment $l_x$ joining
$\partial\Omega$ on its both ends, such that $\nabla v$ is constant on $l_x$.
\end{theorem}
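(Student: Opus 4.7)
My plan is to follow the mollification-plus-commutator paradigm of Constantin-E-Titi \cite{CoTi} (developed for Euler) and of the approach of \cite{CDS} (developed for isometric immersions), and then to pass from the resulting smallness of the approximate Monge-Amp\`ere determinant to the developability structure via a Hartman-Nirenberg-type argument along the lines of \cite{Pak, J2010}.

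For the analytical stage, let $\rho_\epsilon$ be a standard radial mollifier and set $v_\epsilon := v * \rho_\epsilon$, smooth on the interior $\Omega_\epsilon := \{x\in\Omega : \mathrm{dist}(x,\partial\Omega) > \epsilon\}$. Since the hypothesis $\cc(\nabla v \otimes \nabla v) = -2\Det\nabla^2 v = 0$ is distributional, it is stable under convolution, giving $\cc\bigl((\nabla v \otimes \nabla v)_\epsilon\bigr) = 0$ on $\Omega_\epsilon$. Combining with the pointwise identity $\cc(\nabla v_\epsilon \otimes \nabla v_\epsilon) = -2\det\nabla^2 v_\epsilon$ valid for the smooth $v_\epsilon$, we obtain
\begin{equation*}
\det\nabla^2 v_\epsilon = -\frac{1}{2}\, \cc\bigl(\nabla v_\epsilon \otimes \nabla v_\epsilon - (\nabla v\otimes\nabla v)_\epsilon\bigr),
\end{equation*}
where the quantity in parentheses is the Constantin-E-Titi commutator, admitting the quadratic-defect representation
\begin{equation*}
\nabla v_\epsilon(x)\otimes\nabla v_\epsilon(x) - (\nabla v\otimes\nabla v)_\epsilon(x) = -\int \phi(x,y)\otimes\phi(x,y)\, \rho_\epsilon(y)\, dy,
\end{equation*}
with $\phi(x,y) := \nabla v(x-y) - \nabla v_\epsilon(x)$. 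Since $\nabla v \in \mathcal{C}^{0,\alpha}$ we have $|\phi(x,y)| \lesssim \|v\|_{\mathcal{C}^{1,\alpha}} \epsilon^\alpha$ together with the vanishing first moment $\int \phi(x,y)\, \rho_\epsilon(y)\, dy = 0$. The naive bound on $\cc$ applied to such a quadratic expression is $\epsilon^{2\alpha-2}$, but the vanishing first moment combined with the symmetry of the tensor product permits the extraction of an additional factor of $\epsilon^\alpha$ by trading an $x$-derivative of $\phi$ for a $y$-derivative and then integrating by parts onto $\rho_\epsilon$. This yields the sharpened estimate
\begin{equation*}
\|\det\nabla^2 v_\epsilon\|_{L^\infty(K)} \lesssim_K \|v\|_{\mathcal{C}^{1,\alpha}(\bar\Omega)}^2 \, \epsilon^{3\alpha-2} \qquad \text{for every } K \Subset \Omega,
\end{equation*}
which tends to zero as $\epsilon \to 0$ under the hypothesis $\alpha > 2/3$.

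For the geometric stage, the approximate rank-deficiency of $\nabla^2 v_\epsilon$ is transferred to $v$. The area formula and the above estimate give $|\nabla v_\epsilon(U)| \le \int_U |\det \nabla^2 v_\epsilon|\,dx \to 0$ for every $U \Subset \Omega$; simultaneously, since $\|\nabla^2 v_\epsilon\|_{L^\infty(K)} \lesssim \epsilon^{\alpha-1}$, the smaller eigenvalue of $\nabla^2 v_\epsilon$ is bounded by $\epsilon^{2\alpha-1} \to 0$, so $\nabla^2 v_\epsilon$ admits a unit approximate kernel direction $\xi_\epsilon(x)$. Passing to the limit via uniform convergence $\nabla v_\epsilon \to \nabla v$ on compact sets and adapting the $W^{2,2}$ developability argument of \cite{Pak} to the present $\mathcal{C}^{1,\alpha}$ setting, the direction fields $\xi_\epsilon$ yield, after a compactness/extraction step, a field of line segments along which $\nabla v$ is constant; at every $x \in \Omega$ one obtains the claimed dichotomy, namely either $v$ is affine on a neighborhood of $x$, or a maximal such segment through $x$ exists and, by boundedness of $\Omega$, its endpoints lie on $\partial \Omega$.

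The main obstacle is the sharp commutator estimate in the first stage: extracting the exponent $3\alpha-2$ rather than the naive $2\alpha-2$ requires a delicate exploitation of the vanishing first moment of $\phi(x,\cdot)$ together with the symmetry of the quadratic integrand, exactly parallel to how the Onsager exponent $1/3$ arises for the Euler equations. The second, geometric stage is more topological and less subtle, though carrying the Pakzad-style developability argument from $W^{2,2}$ down to $\mathcal{C}^{1,\alpha}$ regularity, where $\nabla^2 v$ does not exist pointwise and must always be accessed through the approximation, still requires care in the limiting procedure for the approximate rulings.
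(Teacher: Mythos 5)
Your analytic stage contains a genuine error: the pointwise bound $\|\det\nabla^2 v_\epsilon\|_{L^\infty(K)} \lesssim \epsilon^{3\alpha-2}$ does not hold. Applying $\cc$ to the Constantin--E--Titi commutator $B_\epsilon = \nabla v_\epsilon\otimes\nabla v_\epsilon - (\nabla v\otimes\nabla v)_\epsilon$ costs two derivatives, and the estimate from Lemma~\ref{stima}\,(\ref{d}) gives $\|B_\epsilon\|_{2}\lesssim \epsilon^{2\alpha-2}$, hence only $\|\det\nabla^2 v_\epsilon\|_{L^\infty}\lesssim \epsilon^{2\alpha-2}$, which does \emph{not} vanish for $\alpha<1$. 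The mechanism you describe (``trading an $x$-derivative for a $y$-derivative and integrating by parts onto $\rho_\epsilon$'') already is what produces the $\epsilon^{2\alpha-k}$ bound: the vanishing first moment $\int\phi\,\rho_\epsilon=0$ kills one term, but the surviving terms $\int\phi\otimes\phi\,\nabla^2_y\rho_\epsilon\,\mathrm{d}y$ and $\nabla^2 v_\epsilon\otimes\nabla^2 v_\epsilon$ are each of order $\epsilon^{2\alpha-2}$; there is no additional factor of $\epsilon^\alpha$ available pointwise. The improvement to $\epsilon^{3\alpha-2}$ is genuinely an \emph{integral} phenomenon: in Proposition~\ref{deg} the paper pairs $\cc B_\epsilon$ against $g\circ\nabla v_\epsilon$, integrates by parts \emph{once}, and estimates $\|\nabla(g\circ\nabla v_\epsilon)\|_0\lesssim \epsilon^{\alpha-1}$ together with $\|B_\epsilon\|_1\lesssim\epsilon^{2\alpha-1}$, yielding $\epsilon^{3\alpha-2}$. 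This trilinear structure is exactly analogous to the way $1/3$ emerges in Onsager, and it cannot be collapsed into a pointwise bound.

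Even if you replace your $L^\infty$ claim with the integral statement that is actually provable, what you obtain is the degree formula of Proposition~\ref{deg}, i.e.\ $\deg(\nabla v, U, y)=0$ for a.e.\ $y$. The paper explicitly warns that this by itself does \emph{not} imply $|\nabla v(\Omega)|=0$: there exist $\mathcal{C}^{0,\alpha}$ vector fields whose local degree vanishes everywhere yet which map onto a full square (the Liu--Mal\'y example cited in the proof). Your passage ``the area formula \ldots give $|\nabla v_\epsilon(U)|\le\int_U|\det\nabla^2 v_\epsilon|\to 0$'' therefore has a double gap: the integrand is not small pointwise, and even $|\nabla v_\epsilon(U)|\to 0$ would not by itself pass to the limit to give $|\nabla v(U)|=0$, since uniform convergence of maps does not control the measure of the limit image. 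The paper closes exactly this gap by a degree-theoretic device absent from your proposal: it introduces $u_\delta(x) = \nabla v(x) + \delta(-x_2,x_1)$, exploits the exact identity $\det\nabla u_{\delta,l} = \det\nabla^2 v_l + \delta^2$ to force $\deg(u_\delta, U, y)\ge 1$ on the interior of the image, and then contrasts this with $\deg(\nabla v, U, \cdot)=0$ to conclude $\nabla v(U)\subset\nabla v(\partial U)$, a set of measure zero because $\alpha>1/2$. Finally, your geometric stage (``approximate kernel directions $\xi_\epsilon$'', ``adapting the $W^{2,2}$ developability argument'') is not a proof sketch so much as a hope; the paper instead applies Korobkov's theorem directly once $\mathrm{Int}(\nabla v(\Omega))=\emptyset$ is established, and then runs a separate elementary argument to show the resulting line segments extend to $\partial\Omega$. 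You would need to either invoke Korobkov as the paper does, or supply a genuinely new compactness argument for the approximate rulings, which is a substantial undertaking in its own right.
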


We also announce the following parallel rigidity result  $f\ge c>0$, that will be the
subject of the forthcoming paper \cite{LPconvex}:
\begin{theorem}\label{rig2} 
Let $\Omega\subset\R^2$ be an open, bounded domain and let:
$$\frac{2}{3}<\alpha<1.$$
If $v\in\mathcal{C}^{1,\alpha}(\bar\Omega)$ is a solution to $\Det
\nabla^2 v = f$ in $\bar\Omega$, where $f$ is a positive Dini
continuous, then $v$ is convex. In fact, it is also an
Alexandrov solution to $\det\nabla^2v = f$ in $\Omega$. 
\end{theorem}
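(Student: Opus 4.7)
The plan is to transfer to the Monge-Amp\`ere operator (\ref{MA}) the commutator-plus-degree strategy used by Borisov~\cite{bori1} and Conti-De Lellis-Sz\'ekelyhidi~\cite{CDS} in the rigidity theory of $C^{1,\alpha}$-isometric immersions above the threshold $\alpha=2/3$. The quadratic tensor $\nabla v\otimes\nabla v$ plays exactly the role of the pull-back metric, so the same critical exponent should control rigidity in the present setting.

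I would begin with a commutator estimate. Mollify by a standard kernel, $v_\e=v*\rho_\e$. Smoothness of $v_\e$ gives $\Det\nabla^2 v_\e=\det\nabla^2 v_\e$ pointwise, while convolving $\Det\nabla^2 v=f$ with $\rho_\e$ yields $-\tfrac12\cc[(\nabla v\otimes\nabla v)*\rho_\e]=f_\e$. For the commutator $E_\e:=\nabla v_\e\otimes\nabla v_\e-(\nabla v\otimes\nabla v)*\rho_\e$, the identity $(ab)_\e-a_\e b_\e=\int(a(\cdot-y)-a_\e)(b(\cdot-y)-b_\e)\rho_\e(y)\,dy$ gives the Constantin-E-Titi bound $\|E_\e\|_{C^0(\Omega')}\le C\|\nabla v\|_{C^{0,\alpha}}^2\,\e^{2\alpha}$ on each $\Omega'\Subset\Omega$. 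Since $\det\nabla^2 v_\e-f_\e=-\tfrac12\cc E_\e$, integrating $\cc$ by parts against a test function $\varphi$ yields $|\int(\det\nabla^2 v_\e-f_\e)\varphi\,dx|\le C\e^{2\alpha}\|\varphi\|_{C^2}$, so $\det\nabla^2 v_\e\to f$ in $\mathcal D'(\Omega)$; Dini continuity of $f$ additionally gives $f_\e\to f$ uniformly on compact subsets.

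Next I would upgrade this distributional convergence to convexity. For smooth $v_\e$ the signed area formula reads $\int_U \det\nabla^2 v_\e\,dx=\int_{\R^2}\deg(\nabla v_\e,U,y)\,dy$ for any Lipschitz subdomain $U\Subset\Omega$, and the previous step forces the left-hand side to converge to $\int_U f>0$. Combining this with the uniform convergence $\nabla v_\e\to\nabla v$ and the hypothesis $f\ge c>0$ is designed, as in \cite{bori1,CDS}, to rule out folding of $\nabla v_\e$: for small $\e$ the local Brouwer degree of $\nabla v_\e$ on small balls must equal $+1$, so $\nabla v_\e$ is locally injective with positive local Jacobian. Connectedness of $\Omega$, together with a possible global sign flip $v\leadsto -v$, then produces local strict convexity of $v_\e$ on fixed compacts, and the $C^1$ limit yields convexity of $v$ on $\Omega$. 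Once $v$ is convex, its classical Monge-Amp\`ere measure $\mu_v(E)=|\partial v(E)|$ is well defined and depends continuously on $v$ under uniform convergence of convex functions; the distributional identity from the first step identifies the weak limit with $f\,dx$, so $v$ is an Alexandrov solution of $\det\nabla^2 v=f$.

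The crux of the argument is this second part: the commutator estimate provides only distributional ($C^{-2}$) control of the defect $\det\nabla^2 v_\e-f_\e$, whereas convexity is a pointwise, topological property of $\nabla v$. The exponent $\alpha>2/3$ is the critical threshold at which the commutator gain $\e^{2\alpha}$ is strong enough to preserve the Brouwer degree of $\nabla v_\e$ in the limit, and the Dini continuity of $f$ is what prevents the modulus of positivity of $f_\e$ from deteriorating as $\e\to 0$.
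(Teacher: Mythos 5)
The paper does not actually give a proof of Theorem~\ref{rig2}; the authors explicitly defer it to the forthcoming reference \cite{LPconvex}, while pointing to Proposition~\ref{deg} (the degree formula, proved via a commutator estimate) as the key ingredient. Your first part --- the Constantin--E--Titi commutator estimate, the distributional convergence $\det\nabla^2 v_\e - f_\e \to 0$, and the passage to the degree formula --- matches the paper's Proposition~\ref{deg} in structure and in the role of the threshold $\alpha>2/3$.

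The gap is in the second part. From the commutator estimate you only control $\det\nabla^2 v_\e - f_\e$ \emph{distributionally}: its integrals against $C^2$ test functions decay like $\e^{3\alpha-2}$. This gives no pointwise information about $\det\nabla^2 v_\e$, so there is no reason for $\nabla v_\e$ to be locally orientation-preserving, for its local Brouwer degree to equal $+1$, for it to be locally injective, or for $v_\e$ to be locally convex on compacts --- all claims you make in a single step. The signed area formula $\int_U\det\nabla^2 v_\e=\int\deg(\nabla v_\e,U,\cdot)$ together with $\int_U\det\nabla^2 v_\e\to\int_U f>0$ yields only that the degree is positive \emph{on average}; a priori it could be $\pm k$ on different components, with a positive net integral. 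In fact the whole difficulty of the theorem is exactly here: upgrading the distributional statement to the pointwise, topological assertion of monotonicity of $\nabla v$ and hence convexity of $v$. The paper's own $f\equiv 0$ proof (Theorem~\ref{rig1}) illustrates what this entails: after establishing $\deg(\nabla v,U,y)=0$ one still needs the perturbation $u_\delta=\nabla v+\delta(-x_2,x_1)$ and its degree, a careful covering argument, and finally Korobkov's structure theorem to translate the measure-theoretic conclusion $|\nabla v(\Omega)|=0$ into developability. For $f\ge c>0$ one would analogously need to first deduce $\deg(\nabla v,U,y)\ge 1$ for $y\in\nabla v(U)\setminus\nabla v(\partial U)$ from Proposition~\ref{deg}, then invoke a genuine convexity criterion (along the lines of the $W^{2,2}$ case in \cite{LMP} and \cite{Sve}, where one shows $\nabla v$ is locally monotone/quasiregular). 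Your proposal skips this bridge entirely; as written, the step from ``degree integral is positive'' to ``$v_\e$ locally convex'' is false.

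A smaller point: you should also confirm that Proposition~\ref{deg} applies under the hypothesis of Theorem~\ref{rig2}, i.e.\ that a positive Dini-continuous $f$ lies in $L^p(\Omega)$ for some $p>1$ (it does, being bounded), and that the potential $A$ with $\mathrm{curl}\,\mathrm{curl}\,A=-f$ inherits the $C^{0,\beta}$ regularity needed in the proof of the commutator bound.
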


In proving Theorem \ref{rig1}, we use  a commutator estimate for
deriving  a degree formula in Proposition \ref{deg}.   
Similar commutator estimates are used in \cite{CoTi} for the Euler equations and in  
\cite{CDS} for the isometric immersion problem;
this is not surprising, since the presence of a quadratic term plays a
major role in  all three cases, allowing for the efficiency  
of the convex integration and iteration methods. Let us also mention that it is still an open problem which
  value of $\alpha$ is the critical value for the rigidity-flexibility dichotomy, and it
  is conjectured to be $1/3, 1/2$ or $2/3$.

\subsection{Notation.}
By $\mathbb{R}^{2\times 2}_{sym}$ we denote the space of symmetric
$2\times 2$ matrices, and by  $\mathbb{R}^{2\times 2}_{sym, >}$ we
denote the cone of symmetric, positive definite $2\times 2$ matrices.
The space of H\"older continuous functions
$\mathcal{C}^{k,\alpha}(\bar\Omega)$ consists of restrictions of
functions $f\in \mathcal{C}^{k,\alpha}(\mathbb{R}^2)$ to $\Omega\subset\R^2$.
Then, the $\mathcal{C}^k(\bar\Omega)$ norm of such restriction is
denoted by $\|f\|_k$, while its H\"older norm $\mathcal{C}^{k, \alpha}(\bar\Omega)$ is $\|f\|_{k,\alpha}$.
By $C>0$ we denote a universal constant which is independent of all
parameters, unless indicated otherwise.
% and by $[\cdot]_k$
% the $\mathcal{C}^0(\Omega)$ norm of its $k$th (multi-index) derivatives. 

\subsection {Acknowledgments.}
The authors would like to thank Camillo De Lellis for discussions about this problem. 
This project is based upon work supported by, among others, the National Science
Foundation.
M.L. was partially supported by the NSF grants DMS-0846996
and DMS-1406730. M.R.P. was partially supported by the NSF grant
DMS-1210258. A part of this work was completed while the authors
visited the Forschungsinstitut f\"ur Mathematik at ETH 
(Zurich, Switzerland). The institute's hospitality is gratefully acknowledged.

\section{The $\mathcal{C}^1$ approximations - preliminary results.}\label{C1}

In this and the next section we prove a weaker version of the result
in Theorem \ref{w2oi-hld}. Namely:

\begin{theorem}\label {weak2oi}
Let $\Omega\subset\mathbb{R}^2$ be an open and bounded domain. Let $v_0
\in \mathcal{C}^\infty (\bar \Omega)$, $w_0\in \mathcal{C}^\infty(\bar \Omega, \R^2)$ and
$A_0\in \mathcal{C}^\infty (\bar \Omega,  \R^{2\times 2}_{sym})$ be such that:
\bee\label{wA-inequ}
\exists c_0>0 \qquad A_0 - \big(\frac 12 \nabla v_0 \otimes \nabla v_0
+ \sym \nabla w_0\big) > c_0
{\rm Id}_2 \qquad \mbox{in } \bar\Omega.
\eee 
Then there exist sequences $v_n \in \mathcal{C}^1(\bar
\Omega)$ and $w_n \in \mathcal{C}^1(\bar \Omega,   \R^2)$ which converge uniformly
to $v_0$ and $w_0$ respectively, and which satisfy:
\bee\label{2oi} 
A_0 = \frac 12 \nabla v_n \otimes \nabla v_n + \sym \nabla w_n  \qquad
\mbox{in } \bar\Omega.
\eee
\end{theorem}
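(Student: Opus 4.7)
The plan is to carry out a Nash--Kuiper style iteration for the tensor equation $T(v,w):=\frac12\nabla v\otimes\nabla v+\sym\nabla w=A_0$, exploiting the hypothesis (\ref{wA-inequ}) that provides a strict subsolution with deficit $D_0:=A_0-T(v_0,w_0)>c_0\,{\rm Id}_2$. The key structural observation is that the in-plane term $\sym\nabla w$ furnishes exactly the degrees of freedom needed to absorb the oscillatory cross-terms produced when $\nabla v$ is perturbed by a single-frequency sinusoid; without this auxiliary unknown the construction would fail. I will alternate (a) a decomposition of the current deficit into a finite sum of rank-one positive matrices in constant directions, and (b) a sequence of one-dimensional oscillatory corrections to $(v,w)$, one per rank-one summand, that reduce the deficit while keeping $(v,w)$ close in $\mathcal C^0$.

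Two ingredients drive the scheme. First, an algebraic lemma: every smooth $P\in\mathcal C^\infty(\bar\Omega,\mathbb{R}^{2\times 2}_{sym,>})$ admits a representation $P=\sum_{k=1}^{N}a_k^2\,\eta_k\otimes\eta_k$ with $a_k\in\mathcal C^\infty(\bar\Omega)$ nonnegative and $\eta_k\in\mathbb{S}^1$ \emph{constant} unit vectors, via a smooth partition of unity that localizes $P$'s spectral decomposition. Second, a \emph{primitive step}: to absorb $a^2\,\eta\otimes\eta$, set
\begin{equation*}
\tilde v=v+\tfrac{2a}{\lambda}\sin(\lambda\,x\cdot\eta),\qquad \tilde w=w-\tfrac{a^2}{2\lambda}\sin(2\lambda\,x\cdot\eta)\,\eta-\tfrac{2a}{\lambda}\sin(\lambda\,x\cdot\eta)\,\nabla v.
\end{equation*}
Using the identity $2\cos^2(t)=1+\cos(2t)$, expanding $\frac12\nabla\tilde v\otimes\nabla\tilde v$ produces the desired mean term $a^2\,\eta\otimes\eta$ together with two mean-free oscillations, $a^2\cos(2\lambda\,x\cdot\eta)\,\eta\otimes\eta$ and $2a\cos(\lambda\,x\cdot\eta)\sym(\eta\otimes\nabla v)$, and the chosen corrections to $w$ are precisely the $\sym\nabla$-antiderivatives of their negatives, up to a remainder of order $\lambda^{-1}$ whose size is controlled by the $\mathcal C^2$-norms of $a$, $v$, $w$. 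Hence $T(\tilde v,\tilde w)=T(v,w)+a^2\,\eta\otimes\eta+E_\lambda$ with $\|E_\lambda\|_0+\|\tilde v-v\|_0+\|\tilde w-w\|_0\le C/\lambda$. Concatenating $N$ primitive steps, one per summand of a decomposition of a target increment $\Delta$, constitutes a \emph{stage} that absorbs the whole increment.

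The iteration runs successive stages with geometrically decreasing targets $\Delta_n$ summing (up to any prescribed tolerance) to $D_0$, and with frequencies $\lambda_n$ chosen inductively large enough that (i) the accumulated errors keep the residual deficit strictly positive definite, so that the decomposition lemma can be re-applied, (ii) $\|v_{n+1}-v_n\|_0+\|w_{n+1}-w_n\|_0\le 2^{-n}\epsilon$ for a prescribed $\epsilon>0$, giving uniform convergence, and (iii) the $\mathcal C^1$-increments, which scale like $\sqrt{\|\Delta_n\|_0}$ rather than $\lambda_n^{-1}$, form a summable series, producing Cauchyness of $(v_n,w_n)$ in $\mathcal C^1$. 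The main obstacle is the tension between (i) and the growth of $\mathcal C^2$-norms: each primitive step drives $\|v_n\|_2$ and $\|w_n\|_2$ upward, inflating the error constant $C=C(\|v_n\|_2,\|w_n\|_2,\|a\|_2)$ at the next stage and forcing $\lambda_{n+1}$ to grow fast enough to compensate. A careful nested inductive choice $\lambda_n\uparrow\infty$, combined with geometric shrinkage of $\Delta_n$, closes the loop and produces, for any $\epsilon>0$, a $\mathcal C^1$-limit $(v,w)$ solving $T(v,w)=A_0$ exactly with $\|v-v_0\|_0+\|w-w_0\|_0\le\epsilon$; setting $\epsilon=1/n$ delivers the required sequence.
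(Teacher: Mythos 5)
Your scheme coincides with the paper's own proof: the explicit trigonometric step is exactly the ansatz of Proposition~\ref{step} (there encoded by $\Gamma_1(x,t)=\tfrac{a(x)}{\pi}\sin 2\pi t$, $\Gamma_2(x,t)=-\tfrac{a(x)^2}{4\pi}\sin 4\pi t$), the constant-direction rank-one decomposition is Lemma~\ref{decompose}, and the stage built from one step per summand together with the $\mathcal{C}^1$-increments scaling as $\sqrt{\|\mathcal D\|_0}$ is precisely Proposition~\ref{stage} and the concluding iteration. One small caveat on the decomposition lemma as you phrase it: the constant directions $\eta_k$ arise from a partition of unity on the \emph{matrix cone} $\mathbb{R}^{2\times 2}_{sym,>}$ composed with the deficit (each chart furnishing a fixed triple of directions via a conjugated rank-one decomposition of the identity, as in Lemma~\ref{decomposeId}), not from localizing a pointwise spectral decomposition of $\mathcal D(x)$ in the variable $x$, which would produce $x$-dependent rather than constant $\eta_k$.
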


We start with a series of preliminary lemmas whose details we provide for the
sake of completeness.
The first lemma is an observation in convex integration, pertaining to
solving an appropriate differential inclusion to be used for
constructing the $1$-dimensional oscillatory perturbations in $v_n$
and $w_n$.  As always, $C>0$ is a universal constant, independent of all
parameters, in particular independent of the function $a$ below.
 
\begin{lemma}\label{convex} 
Let $a\in \mathcal{C}^\infty(\bar \Omega)$ be a nonnegative function
on an open and bounded set $\Omega\subset\mathbb{R}^2$.  There
exists a smooth $1$-periodic field $\Gamma = (\Gamma_1, \Gamma_2) \in 
\mathcal{C}^\infty(\bar \Omega \times \R, \R^2)$ such that  the
following holds  for all $(x,t)\in \bar \Omega\times \R$: 
\begin{equation}\label{jed}
\begin{split}
& \Gamma  (x, t+1)   = \Gamma(x,t),  \\
& \frac 12 |\partial_t \Gamma_1(x, t)|^2 + \partial_t \Gamma_2(x, t)  = a(x)^2,   
\end{split} 
\end{equation}
together with the uniform bounds:
\begin{equation}\label{dwa}
\begin{split}
& |\Gamma_1(x,t)| + |\partial_t \Gamma_1 (x,t)|   \le  Ca(x), \qquad
|\Gamma_2(x,t)| + |\partial_t \Gamma_2 (x,t)|   \le  Ca(x)^2, \\
& |\nabla_x \Gamma_1 (x, t)| \le C |\nabla a(x)|, \qquad  |\nabla_x \Gamma_2 (x, t)| \le C |a(x)| |\nabla a(x)|.
\end{split} 
\end{equation}
\end{lemma}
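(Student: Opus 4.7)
The plan is to look for $\Gamma$ in the product form
\[
\Gamma_1(x,t) = a(x)\,\Phi(t), \qquad \Gamma_2(x,t) = a(x)^2\,\Psi(t),
\]
where $\Phi,\Psi \in \mathcal{C}^\infty(\mathbb{R})$ are suitably chosen $1$-periodic profiles. This ansatz reduces the pointwise differential constraint \eqref{jed}$_2$ to the separated condition
\[
\tfrac 12 a(x)^2 \Phi'(t)^2 + a(x)^2 \Psi'(t) = a(x)^2,
\]
which (when $a(x)>0$, and trivially when $a(x)=0$) is equivalent to the single ODE identity
\[
\Psi'(t) = 1 - \tfrac 12 \Phi'(t)^2.
\]
The construction is therefore reduced to picking a scalar profile $\Phi$.

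The next step is to choose $\Phi$ so that both $\Phi$ and the resulting $\Psi$ are $\mathcal{C}^\infty$ and $1$-periodic. For periodicity of $\Phi$ we need $\int_0^1 \Phi'(t)\,dt = 0$, and for periodicity of $\Psi(t) = \int_0^t \bigl(1 - \tfrac 12 \Phi'(s)^2\bigr)\,ds$ we need $\int_0^1 \Phi'(t)^2\,dt = 2$. A natural choice is $\Phi'(t) = 2\sin(2\pi t)$, which has zero mean and $L^2$-norm squared equal to $2$; this forces
\[
\Phi(t) = \frac{1-\cos(2\pi t)}{\pi}, \qquad \Psi(t) = \int_0^t \cos(4\pi s)\,ds = \frac{\sin(4\pi t)}{4\pi},
\]
both smooth and $1$-periodic. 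One should then verify the product formulas $\Gamma_1(x,t)=a(x)\Phi(t)$ and $\Gamma_2(x,t)=a(x)^2\Psi(t)$ satisfy \eqref{jed}: the periodicity is automatic, and the pointwise identity holds by the choice of $\Psi$.

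Finally, the estimates \eqref{dwa} follow by direct differentiation using that $\Phi,\Phi',\Psi,\Psi'$ are uniformly bounded on $\mathbb{R}$ by absolute constants. Namely, $|\Gamma_1|+|\partial_t\Gamma_1| \le (\|\Phi\|_\infty + \|\Phi'\|_\infty)\,a(x)$ and $|\Gamma_2|+|\partial_t\Gamma_2| \le (\|\Psi\|_\infty + \|\Psi'\|_\infty)\,a(x)^2$; likewise $\nabla_x\Gamma_1 = \Phi(t)\nabla a(x)$ and $\nabla_x\Gamma_2 = 2\Psi(t)\,a(x)\nabla a(x)$, yielding the claimed bounds. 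There is no real obstacle here: the only point requiring care is matching the two periodicity conditions simultaneously, and this is handled by imposing the two integral constraints above and checking they are compatible with a single smooth profile. Since $a\in\mathcal{C}^\infty(\bar\Omega)$, smoothness of $\Gamma$ on $\bar\Omega\times\mathbb{R}$ is immediate from the product structure, even where $a$ vanishes.
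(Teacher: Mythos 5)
Your proof is correct and takes essentially the same route as the paper: the paper also uses the separable ansatz $\Gamma_1 = a\cdot\Phi(t)$, $\Gamma_2 = a^2\cdot\Psi(t)$, arriving at $\Gamma_1(x,t) = \frac{a(x)}{\pi}\sin(2\pi t)$ and $\Gamma_2(x,t) = -\frac{a(x)^2}{4\pi}\sin(4\pi t)$, which is your construction up to a time shift $t\mapsto t+\tfrac14$. The only cosmetic difference is that the paper explicitly frames the choice of $(\Phi',\Psi')$ as a one-dimensional convex integration step (a periodic zero-mean loop on the parabola $\tfrac12 s_1^2+s_2=1$), whereas you read off the same two integral constraints $\int_0^1\Phi'=0$ and $\int_0^1\Phi'^2=2$ directly from the ODE for $\Psi$.
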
 
\begin{proof}
Firstly, note that there exists a smooth $1$-periodic function $\gamma
\in \mathcal{C}^\infty (\R,  \R^2)$, such that for all $t\in \R$ there holds:
\begin{equation*}
\begin{split}
& \gamma  (t+1)   = \gamma(t),  \qquad \int_0^1 \gamma (t) ~\mbox{d}t =(0,0),\\
& \gamma(t) \in P := \Big \{(s_1,s_2) \in \R^2 ; ~ \frac 12 s_1^2 + s_2 =  1, ~|s_1| \le 2  \Big \}.
\end{split} 
\end{equation*}
Existence of $\gamma$ is a consequence of the fundamental lemma of
convex integration, since the intended average $(0,0)$ lies in the
convex hull of the parabola $P$ (see Figure \ref{figpar}).
Indeed, one can take:
\bees
\gamma(t) = \big(2\cos(2 \pi t), - \cos (4 \pi t)\big) \in P.
\eees 
It is now enough to ensure that $\partial_t\Gamma_1 = a(x)
\gamma_1(x)$ and $\partial_t\Gamma_2 = a(x)^2\gamma_2(x)$ to obtain
(\ref{jed}). Namely:
\bees
\Gamma_1(x,t) = \frac{a(x)}{\pi} \sin(2\pi t), \quad \Gamma_2(x,t) = - \frac{a(x)^2}{4\pi} \sin {4\pi t}.
\eees
We see directly that the bounds in (\ref{dwa}) hold.
\end{proof} 

\medskip

To compare with the problem of isometric
immersions, note that in that context, a $1$-dimensional convex integration  
lemma is similarly proved in \cite[Figure2,  p. 11] {Sz}, where instead of a parabola, 
the constraint set consists of a full circle.

\begin{figure}[h]
%\centering
\includegraphics[width=2.5in]{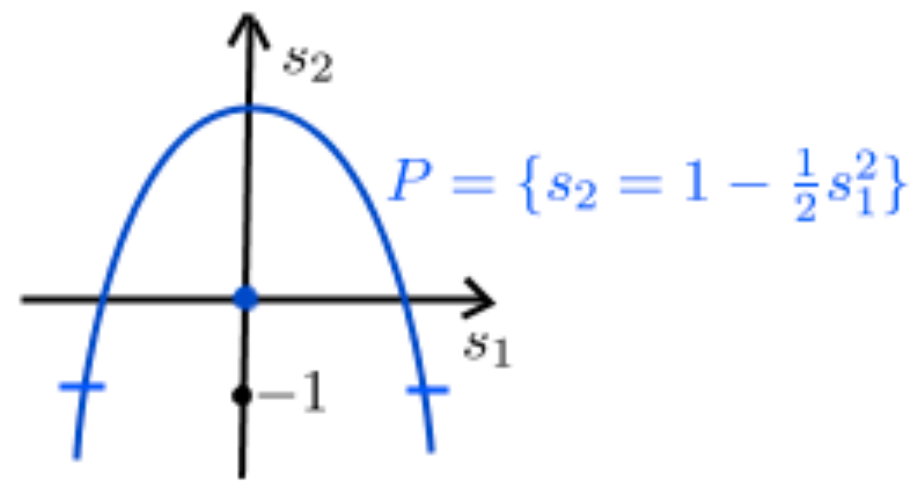}
\vspace{-0.4cm}
\caption{The parabola $P$ in the $1$d convex integration problem of
  Lemma \ref{convex}.} \label{figpar}
\end{figure}

\medskip
 
We will also need a special case of  \cite[Lemma 3]{CDS}  about
decomposition of positive definite symmetric matrices into
rank-one matrices. %We reproduce all the details of the proof:

\begin{lemma}\label{decomposeId} 
There exists a sufficiently small constant $r_0>0$ such that the
following holds. For every positive definite symmetric
matrix $G_0\in\mathbb{R}^{2\times 2}_{sym, >}$, there are three unit vectors $\{\xi_k\in\mathbb{R}^3\}_{k=1}^3$ and
three linear functions $\{\Phi_{k} :\R^{2\times 2}_{sym} \to \R\}_{k=1}^3$,
such that:  for any $G\in \R^{2\times 2}_{sym}$ we have
\bee\label{N=3Id}
\forall G\in\mathbb{R}^{2\times 2}_{sym}\qquad G= \sum_{k=1}^3 \Phi_{k}(G) \xi_k \otimes \xi_k,
\eee 
and  that each $\Phi_{k} $ is strictly positive on 
the ball $B(G_0,{r(G_0)})\subset\mathbb{R}^{2\times 2}_{sym}$ with
radius $r(G_0)= \frac{r_0}{|{G_0}^{-1/2}|^2}$.  
\end{lemma}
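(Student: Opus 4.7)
\medskip

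\noindent\emph{Plan of proof.} The plan is to reduce everything to the identity case by the natural conjugation $G \mapsto G_0^{-1/2} G G_0^{-1/2}$, which is a linear automorphism of $\mathbb{R}^{2\times 2}_{sym}$ sending $G_0$ to $\mathrm{Id}_2$. If $\tilde G = G_0^{-1/2} G G_0^{-1/2} = \sum_k \tilde\Phi_k(\tilde G)\,\tilde\xi_k\otimes\tilde\xi_k$ is a rank-one decomposition of $\tilde G$ with unit vectors $\tilde\xi_k$, then pushing forward yields
\begin{equation*}
G = \sum_{k=1}^{3} \tilde\Phi_k(\tilde G)\, (G_0^{1/2}\tilde\xi_k)\otimes(G_0^{1/2}\tilde\xi_k),
\end{equation*}
which after normalising each vector $\xi_k = G_0^{1/2}\tilde\xi_k/|G_0^{1/2}\tilde\xi_k|$ and setting $\Phi_k(G) := |G_0^{1/2}\tilde\xi_k|^2\,\tilde\Phi_k(G_0^{-1/2} G G_0^{-1/2})$ gives (\ref{N=3Id}).

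The second step is to construct the decomposition at the identity. I would take the three unit vectors pointing at angles $0, 2\pi/3, 4\pi/3$, namely $\tilde\xi_1=(1,0)$, $\tilde\xi_2=(-1/2,\sqrt{3}/2)$, $\tilde\xi_3=(-1/2,-\sqrt{3}/2)$. A direct computation shows $\sum_{k=1}^3 \tilde\xi_k\otimes\tilde\xi_k = \tfrac{3}{2}\mathrm{Id}_2$, so $\mathrm{Id}_2 = \sum_k \tfrac{2}{3}\,\tilde\xi_k\otimes\tilde\xi_k$. Since the three symmetric rank-one matrices $\tilde\xi_k\otimes\tilde\xi_k$ are easily checked to be linearly independent (hence a basis of the $3$-dimensional space $\mathbb{R}^{2\times 2}_{sym}$), the coefficients $\tilde\Phi_k(\tilde G)$ are uniquely determined linear functions of $\tilde G$, and in particular $\tilde\Phi_k(\mathrm{Id}_2) = \tfrac{2}{3} > 0$.

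The third step is to obtain the quantitative radius. The linear functions $\tilde\Phi_k$ are fixed and have some universal Lipschitz constant $C_1$ with respect to any chosen norm on $\mathbb{R}^{2\times 2}_{sym}$. The conjugation has operator norm controlled by $|G_0^{-1/2}|^2$, so whenever $|G - G_0| \le r_0/|G_0^{-1/2}|^2$ we have $|\tilde G - \mathrm{Id}_2| \le r_0$, hence $|\tilde\Phi_k(\tilde G) - 2/3| \le C_1 r_0$. Fixing $r_0$ universally small (e.g. $r_0 < 1/(3C_1)$) makes each $\tilde\Phi_k(\tilde G) > 1/3$ on $B(G_0,r(G_0))$, and therefore $\Phi_k(G) = |G_0^{1/2}\tilde\xi_k|^2\,\tilde\Phi_k(\tilde G) > 0$ as required.

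The only subtle point is the scaling of the radius with $|G_0^{-1/2}|^{-2}$, which is forced by the operator norm of the conjugation map and is exactly compensated by the fact that we can take $r_0$ to be a universal constant independent of $G_0$. Everything else is either explicit (the identity decomposition) or a standard use of linearity and finite-dimensional continuity.
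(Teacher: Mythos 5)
Your proof is correct and follows essentially the same route as the paper: conjugate by $G_0^{\pm 1/2}$ to reduce to the identity case, pick three unit vectors whose rank-one tensor squares form a basis of $\mathbb{R}^{2\times 2}_{sym}$ with positive coefficients at $\mathrm{Id}_2$, and invoke continuity (plus the operator-norm bound $|G_0^{-1/2}(G-G_0)G_0^{-1/2}| \leq |G_0^{-1/2}|^2 |G-G_0|$) to get a universal $r_0$. The only difference is cosmetic: you choose the three unit vectors equally spaced at angles $0, 2\pi/3, 4\pi/3$, which by symmetry yields the tidy identity $\mathrm{Id}_2 = \tfrac{2}{3}\sum_k \tilde\xi_k\otimes\tilde\xi_k$ with equal coefficients, whereas the paper uses a less symmetric explicit triple $\zeta_1,\zeta_2,\zeta_3$ with coefficients $3/4, 3/4, 1/2$. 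Both are verified to be bases by a determinant computation, and the quantitative step is identical. Your variant is arguably slightly cleaner but carries no essential difference in content.
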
 
\begin{proof}  
{\bf 1.} First, assume that $G_0 = {\rm Id}_2$. Set:
$$\zeta_1= \frac {1}{\sqrt{12}}(2+\sqrt 2, -2+\sqrt 2), \qquad \zeta_2= \frac
{1}{\sqrt{12}}(-2+\sqrt 2, 2+\sqrt 2), \qquad \zeta_3 = \frac {1}{\sqrt{2}} (1,1).$$
In order to check that the following matrices form a basis of the
$3$-dimensional space $\mathbb{R}^{2\times 2}_{sym}$:
$$\zeta_1\otimes\zeta_1 =
\frac{1}{12}\left[\begin{array}{cc} 6+4\sqrt{2} & -2 \\ -2 &
    6-4\sqrt{2} \end{array}\right],\quad \zeta_2\otimes\zeta_2 =
\frac{1}{12}\left[\begin{array}{cc} 6-4\sqrt{2} & -2 \\ -2 &
    6+4\sqrt{2} \end{array}\right],\quad \zeta_3\otimes\zeta_3 =
\frac{1}{2}\left[\begin{array}{cc} 1 & 1 \\ 1 & 1 \end{array}\right],$$
we validate that:
\bees
\det \left ( \ds \frac{1}{12} \left   [  \begin{array}{ccc} 
{6+ 4\sqrt{2}} &  6- 4\sqrt{2} &  6 \\ -2& -2 &  6 \\
6- 4\sqrt{2} &  6+ 4\sqrt{2}&  6 
\end{array} \right ]  \right )  \neq 0.
\eees 
Consequently, there exist linear mappings $\{\Psi_k:\mathbb{R}^{2\times
  2}_{sym}\to\mathbb{R}\}_{k=1}^3$ yielding the unique decomposition:   
\begin{equation}\label{trzy}
\forall G\in\mathbb{R}^{2\times 2}_{sym}\qquad G= \sum_{k=1}^3 \Psi_k(G) \zeta_k \otimes \zeta_k.
\end{equation}
Now, since $ {\rm Id}_2 = \frac 34 \zeta_1 \otimes \zeta_1 + \frac 34 \zeta_2\otimes\zeta_2 
+ \frac 12 \zeta_3 \otimes \zeta_3$, the continuity of each function $\Psi_k$
implies its positivity in a neighborhood of ${\rm Id}_2$ of some
appropriate radius $r_0$.
  
\smallskip

{\bf 2.} For an arbitrary $G_0\in \R^{2\times 2}_{sym, >}$ we set:
\bees 
\forall k=1\ldots3 \qquad \xi_k = \frac{1}{|G_0^{1/2}\zeta_k|}G_0^{1/2}\zeta_k
\quad \mbox{and}\quad \Phi_k(G)=|G_0^{1/2}\zeta_k|^2\Psi_k(G_0^{-1/2}
G  G_0^{-1/2}).
\eees 
Then, in view of (\ref{trzy}) we obtain (\ref{N=3Id}):
\bees
\forall G\in\mathbb{R}^{2\times 2}_{sym}\qquad G =
G_0^{-1/2}\Big(\sum_{k=1}^3 \Psi_k (G_0^{-1/2} G G_0^{-1/2}) \zeta_k
\otimes \zeta_k\Big) G_0^{1/2} = \sum_{k=1}^3 \Phi_k (G) \xi_k \otimes \xi_k,
\eees 
Finally, if $|G- G_0|< r(G_0)$ then $|G_0^{-1/2} G
G_0^{-1/2} - {\rm Id}_2| \leq |G_0^{-1/2}|^2 |G-G_0| < r_0$, and so indeed
$\Phi_k(G)>0$, since $\Psi_k(G_0^{-1/2} G G_0^{-1/2})>0$.
\end{proof} 
  
The above result can be localized in the following manner, similar to \cite[Lemma 3.3]{Sz}:

\begin{lemma}\label{decompose} 
There exists sequences of unit vectors \{$\eta_k\in\mathbb{R}^2\}_{k=1}^\infty$  and 
nonnegative smooth functions $\{\phi_k \in
\mathcal{C}^\infty_c(\mathbb{R}^{2\times 2}_{sym, >})\}_{k=1}^\infty$, such that:
\begin{equation}\label{cztery}
\forall G\in\mathbb{R}^{2\times 2}_{sym, >} \qquad
G = \sum^{\infty}_{k=1}  \phi_k(G)^2 \eta_k \otimes \eta_k
\end{equation}
and that:
\begin{itemize}
\item[(i)] For all $G\in\mathbb{R}^{2\times 2}_{sym, >}$, at most
  $N_0$ terms of the sum in (\ref{cztery}) are nonzero. The constant
  $N_0$ is independent of $G$.
\item[(ii)] For every compact  $K\subset\mathbb{R}^{2\times 2}_{sym, >}$, there
exists a finite set of indices $J(K)\subset \mathbb N$ such that
$\phi_k(G) = 0$ for all $k\not\in J(K)$ and $G\in K$.
\end{itemize}
\end{lemma}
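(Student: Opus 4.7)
The plan is to globalize the pointwise decomposition of Lemma \ref{decomposeId} to a smooth one on the open cone $\mathbb{R}^{2\times 2}_{sym,>}$ via a partition of unity. For each $G_0\in\mathbb{R}^{2\times 2}_{sym,>}$, Lemma \ref{decomposeId} supplies three unit vectors $\xi_1^{G_0},\xi_2^{G_0},\xi_3^{G_0}$ and three linear functionals $\Phi_1^{G_0},\Phi_2^{G_0},\Phi_3^{G_0}$ which are strictly positive on the open ball $B(G_0,r(G_0))\subset \mathbb{R}^{2\times 2}_{sym}$ and produce the decomposition (\ref{N=3Id}) there. The idea is simply to glue these ``local'' decompositions together with a suitable square-partition of unity.

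First I would choose a countable, locally finite open cover $\{U_\ell\}_{\ell=1}^\infty$ of $\mathbb{R}^{2\times 2}_{sym,>}$ with the property that $\overline{U_\ell}\subset B(G_\ell,r(G_\ell))$ for some $G_\ell\in\mathbb{R}^{2\times 2}_{sym,>}$; such a cover exists since the cone is a paracompact, second countable, locally compact open subset of $\mathbb{R}^{2\times 2}_{sym}$, and one may for instance apply a Whitney-type refinement to the covering $\{B(G_0,r(G_0)/2)\}_{G_0}$. Next, take any smooth partition of unity $\{\tilde\rho_\ell\}$ subordinate to $\{U_\ell\}$ and set
$$
\rho_\ell(G) = \frac{\tilde\rho_\ell(G)}{\bigl(\sum_{j}\tilde\rho_j(G)^2\bigr)^{1/2}}.
$$
The denominator is smooth and strictly positive on $\mathbb{R}^{2\times 2}_{sym,>}$, since $\sum_j \tilde\rho_j\equiv 1$ forces $\sum_j\tilde\rho_j^2>0$ pointwise, so $\rho_\ell\in \mathcal{C}^\infty_c(U_\ell)$ and moreover $\sum_\ell \rho_\ell^2\equiv 1$ on $\mathbb{R}^{2\times 2}_{sym,>}$. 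Reindexing the double sequence $(\ell,k)$ with $k\in\{1,2,3\}$ into a single sequence, I would then set
$$
\phi_{\ell,k}(G) = \rho_\ell(G)\,\sqrt{\Phi_k^{G_\ell}(G)}\quad\text{on }U_\ell, \qquad \phi_{\ell,k}(G)=0 \text{ elsewhere},\qquad \eta_{\ell,k}=\xi_k^{G_\ell}.
$$

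The main technical point, and the reason for passing to the shrunk cover $\{U_\ell\}$, is smoothness of $\phi_{\ell,k}$ at the boundary of its support: since $\Phi_k^{G_\ell}$ is a linear map that is strictly positive on the whole ball $B(G_\ell,r(G_\ell))\supset \overline{U_\ell}$, it is bounded below by a positive constant on the compact set $\overline{\mathrm{supp}\,\rho_\ell}$, so $\sqrt{\Phi_k^{G_\ell}}$ is smooth there and $\phi_{\ell,k}$ extends smoothly by zero. Once this is in place, the identity (\ref{cztery}) follows immediately by swapping sums and using (\ref{N=3Id}):
$$
\sum_{\ell,k}\phi_{\ell,k}(G)^2\,\eta_{\ell,k}\otimes\eta_{\ell,k}
= \sum_\ell \rho_\ell(G)^2\sum_{k=1}^3 \Phi_k^{G_\ell}(G)\,\xi_k^{G_\ell}\otimes \xi_k^{G_\ell}
= \Bigl(\sum_\ell \rho_\ell(G)^2\Bigr)G = G.
$$

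Finally, properties (i) and (ii) both reduce to local finiteness of $\{U_\ell\}$: at any given $G$, only finitely many $\rho_\ell$ (and hence at most three times that many $\phi_{\ell,k}$) are nonzero, which gives a uniform bound $N_0$ if we arrange the refinement so that each point lies in at most $N_0/3$ of the $U_\ell$ (always achievable in a finite-dimensional ambient space); and for compact $K\subset \mathbb{R}^{2\times 2}_{sym,>}$ the local finiteness together with compactness ensures that only finitely many $\rho_\ell$, hence $\phi_{\ell,k}$, do not vanish identically on $K$, yielding the required finite index set $J(K)$.
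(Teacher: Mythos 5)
Your proof follows the same overall strategy as the paper — glue the local decompositions from Lemma~\ref{decomposeId} via a partition of unity — but the two technical ingredients are handled differently, in one case less rigorously and in the other more carefully than the paper does.

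On the covering side, the paper does real work to produce a cover by balls $B(G_i,r(G_i))$ of \emph{uniformly bounded multiplicity}: it observes that $r(cG)=cr(G)$, applies the Besicovitch covering theorem to the annular sector $\mathcal{C}_0=\{1/2\le|G|\le 1\}$ to extract $\sigma_0$ disjoint subfamilies, scales dyadically along the cone, and uses the bound (\ref{gru}) on $r_0$ to show that the even/odd dyadic families remain pairwise disjoint. This gives the explicit $N_0=6\sigma_0$. Your proof replaces all of this with ``such a cover exists since the cone is paracompact, second countable, locally compact'' and, for the uniform bound on multiplicity, ``always achievable in a finite-dimensional ambient space.'' Paracompactness gives a \emph{locally} finite refinement, which only bounds the number of overlaps at each point; it does not by itself give a $G$-independent bound $N_0$, which is precisely what part~(i) asserts. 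The uniform bound requires either a Besicovitch-type argument or a Whitney decomposition of the open cone together with a check (which you gesture at but do not carry out) that the Whitney pieces can be inscribed in balls $B(G_\ell,r(G_\ell))$, i.e. that $r(G_\ell)$ is comparable to $\mathrm{dist}(G_\ell,\partial\mathbb{R}^{2\times 2}_{sym,>})$ with the right constants. So there is a gap here: the key quantitative conclusion of the lemma is asserted rather than derived.

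On the partition-of-unity side, your treatment is actually \emph{more} careful than the paper's. The paper sets $\phi_{i,k}=\theta_i\Phi_{k,G_i}$, which as written does not satisfy $\sum\phi_{i,k}^2\,\eta_{i,k}\otimes\eta_{i,k}=G$; one evidently needs $\phi_{i,k}=(\theta_i\Phi_{k,G_i})^{1/2}$, and then the smoothness of $\sqrt{\theta_i}$ at the boundary of $\mathrm{supp}\,\theta_i$ becomes an issue for a generic partition of unity. Your normalization $\rho_\ell=\tilde\rho_\ell/(\sum_j\tilde\rho_j^2)^{1/2}$, yielding $\sum_\ell\rho_\ell^2\equiv 1$ with each $\rho_\ell$ smooth and compactly supported, together with the observation that $\Phi_k^{G_\ell}$ is bounded away from zero on a neighborhood of $\overline{\mathrm{supp}\,\rho_\ell}$ so that $\sqrt{\Phi_k^{G_\ell}}$ is smooth there, cleanly resolves this. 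I would suggest keeping that part, and replacing the covering step with the paper's explicit Besicovitch-plus-dyadic-scaling construction (or a fully worked Whitney argument) so that $N_0$ is produced rather than postulated.
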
 
\begin{proof}
{\bf 1.} Let $r_0$ be as in Lemma \ref{decomposeId} and additionally 
ensure that:
\begin{equation}\label{gru}
r_0<\frac{1}{8}.
\end{equation}
Recall that for each $G\in\mathbb{R}^{2\times 2}_{sym, >}$ we have
denoted $r(G) = \frac{r_0}{|G^{-1/2}|^2}$ and that $B(G, r(G))\subset
\mathbb{R}^{2\times 2}_{sym, >}$. We first construct a locally finite
covering of $\mathbb{R}^{2\times 2}_{sym, >}$ with properties
corresponding to (i) and (ii).

Since the set $\mathbb{R}^{2\times 2}_{sym, >}$ is a cone, we have:
\begin{equation}\label{piec}
\mathbb{R}^{2\times 2}_{sym, >} = \bigcup_{k\in\mathbb{Z}}
2^k\mathcal{C}_0, \quad \mbox{ where } \quad 
\mathcal{C}_0 = \{G\in \mathbb{R}^{2\times 2}_{sym, >};  ~1/2 \le |G|\le 1\}.
\end{equation}
The collection $\{B(G, r(G))\}_{G\in \mathcal{C}_0}$ covers the sector
$\mathcal{C}_0$ by balls that have uniformly bounded radii:
$r(G)\leq r_0\frac{|G|}{\sqrt{2}}\leq r_0$. Hence, by the Besicovitch
covering theorem, it has a countable subcovering ${\mathcal G_0} = \bigcup_{\sigma=1}^{\sigma_0}
\mathcal{G}^\sigma_{0}$, consisting of $\sigma_0\in\mathbb{N}$
countable families $\{\mathcal{G}_0^\sigma\}_{\sigma=1}^{\sigma_0}$
of pairwise disjoint balls.

Note that for all $c>0$ one has:  $r(cG) = c r(G)$ and so: $B(cG,
r(cG)) = cB(G, r(G))$. Consequently, the collections
$\mathcal G_k^{\sigma}= \{ 2^k B; ~ B\in \mathcal G^\sigma_0\}$ each
consist of countably many pairwise disjoint balls, and $\mathcal{G}_k
= \bigcup_{\sigma=1}^{\sigma_0}\mathcal{G}_k^\sigma$ is a covering of
the dilated sector $2^k\mathcal{C}_0$, for every $k\in\mathbb{Z}$. Define:
\begin{equation}\label{szesc}
\forall \sigma=1\ldots \sigma_0 \qquad \mathcal {G}_{even}^\sigma =
\bigcup_{2\mid k} \mathcal{G}_k^{\sigma} \quad \mbox{ and } \quad \mathcal {G}_{odd}^\sigma
= \bigcup_{2\mid (k+1)} \mathcal{G}_k^{\sigma}. 
\end{equation}
Clearly, in view of (\ref{piec}), the $2\sigma_0$ families in
(\ref{szesc}) form a covering of $\mathbb{R}^{2\times 2}_{sym, >}$, namely:
$$\mathcal{G} = \bigcup_{\sigma=1}^{\sigma_0} \mathcal {G}_{even}^\sigma
\cup \bigcup_{\sigma=1}^{\sigma_0} \mathcal {G}_{odd}^\sigma.$$

We now prove that each of the
families in $\mathcal{G}$ consists of pairwise disjoint balls. We
argue by contradiction. Assume that:
\bees
\exists G\in {B}(G_1, r(G_1))\cap {B}(G_2, r(G_2)) \qquad \mbox{for
    some } \quad B(G_1, r(G_1))  \in  \mathcal {G}_{2k_1}^\sigma, \quad 
B(G_2, r(G_2))  \in  \mathcal {G}_{2k_2}^\sigma.
\eees 
Without loss of generality we may take $k_1=0$ and $k_2= k \ge 1$,  so that:
\bees
\frac{1}{2}\le |G_1| \le 1 \quad \mbox{and} \quad  2^{2k-1} \le |G_2| \le 2^{2k}.
\eees 
This yields a contradiction with (\ref{gru}), in view of:
\bees
\begin{split}
2^{2k-1} -1 &  \le |G_2| - |G_1| \le |G_2 - G_1| \le |G_2 - G| +
|G- G_1| \\ &  \le r(G_2) + r(G_1)  = r_0\big(\frac{1}{|G_2^{-1/2}|^2}
+ \frac{1}{|G_1^{-1/2}|^2}\big)
\le \frac{r_0}{\sqrt{2}} (|G_2| + |G_1|) \leq r_0 (2^{2k}+1),
\end{split}  
\eees

\smallskip

{\bf 2.} Note that $\mathcal{G}$ can be assumed locally finite, by
paracompactness. We write: $\mathcal G= \{B_i=B(G_i, r(G_i))\}_{i=1}^\infty$
and let $\{\theta_i\in\mathcal{C}_c^\infty(B_i)\}_{i=1}^\infty$ be a
partition of unity subordinated to $\mathcal{G}$. For each
$i\in\mathbb{N}$, let $\{\xi_{k, G_i}\}_{k=1}^3$ and $\{\Phi_{k,
  G_i}\}_{k=1}^3$ be the unit vectors and the linear functions as in
Lemma \ref{decomposeId}. Then:
\bees
\forall G\in \mathbb{R}^{2\times 2}_{sym, >}\qquad 
G = \sum_{i\in \mathbb N} \theta_i (G) G = \sum_{i\in \mathbb N}
\sum_{k=1}^3  \theta_i(G)\Phi_{k,G_i}(G)\xi_{k,G_i} \otimes
\xi_{k,G_i}, 
\eees 
and we see that (\ref{cztery}) holds by taking:
$$\eta_{i,k} = \xi_{k, G_i} \quad \mbox{ and }\quad \phi_{i,k} =
\big(\theta_i \Phi_{k, G_i}\big).$$
Since $\mbox{supp }\phi_{i,k}\subset \mathcal B_i$ and since each $G$
belongs to at most $2\sigma_0$ balls $B_i$, we see that (i)
holds with $N_0= 6\sigma_0$. On the other hand, condition (ii) follows
by local finiteness of $\mathcal{G}$.
\end{proof}

\section{The $\mathcal{C}^1$ approximations - a proof of Theorem \ref{weak2oi}.}\label{C22}

The first result in the approximating sequence construction is what
corresponds to a \lq step' in Nash and Kuiper's terminology. 
 
\begin{proposition}\label{step}
Let $\Omega\subset \R^2$ be an open and bounded set. Given are:
functions $v \in \mathcal{C}^\infty(\bar \Omega)$ and $w\in
\mathcal{C}^\infty (\bar \Omega, \mathbb{R}^2)$, a nonnegative function
$a \in \mathcal{C}^\infty(\bar \Omega)$, and a unit vector $\eta\in
\R^2$. Then, for every $\lambda>1$ there exist approximations
$\tilde v_\lambda \in \mathcal{C}^\infty(\bar \Omega)$ and $\tilde w_\lambda \in
\mathcal{C}^\infty (\bar \Omega, \mathbb{R}^2)$ satisfying the
following bounds:
% with a universal constant $C>0$, independent of all parameters:
\bee\label{stepresult1}
\begin{split}
&\left\|\big(\frac 12 \nabla \tilde v_\lambda \otimes \nabla \tilde v_\lambda +
\sym \nabla \tilde w_\lambda\big)   - \big(\frac 12 \nabla  v \otimes \nabla v
+ \sym \nabla w + a^2 \eta \otimes \eta\big)\right\|_0 \\ &
\qquad\qquad\qquad\qquad \qquad\qquad\qquad\qquad
\leq  \frac C \lambda  
\|a\|_{0}  \big(\|\nabla a\|_{0} + \|\nabla^2 v\|_0  \big) + \frac{C}{\lambda^2} \|\nabla a\|^2_{0}, 
\end{split}
\eee
\bee\label{stepresult2}
\| \tilde v_\lambda -  v\|_0 \le    \frac{C}{\lambda} \|a\|_0 \quad
\mbox{ and } \quad
\| \tilde w_\lambda -  w\|_0  \le \frac{C}{\lambda} \|a\|_0 (\|a\|_0+ \|\nabla v\|_0), 
\eee 
\bee\label{stepresult3}
\begin{split}
& \forall x\in \bar\Omega \qquad 
|\nabla \tilde v_\lambda(x) - \nabla v(x)|  \le C a(x) + \frac C\lambda \|\nabla a\|_0, \\
&|\nabla \tilde w_\lambda(x) - \nabla w(x)|  \le C a(x) (\|a\|_0+
\|\nabla v\|_0) + \frac C{\lambda} \Big(\|a\|_0  (\|\nabla a\|_0 +
\|\nabla^2 v\|_0) + \|\nabla a\|_0 \|\nabla v\|_0\Big). 
\end{split}
\eee 
%\textcolor{red}{It is important to have the last estimate in  pointwise}.
\end{proposition}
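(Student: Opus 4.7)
The plan is to follow the Nash--Kuiper corrugation strategy, adapted here to the presence of the linear companion $\sym\nabla w$. First apply Lemma \ref{convex} to the given function $a$ to produce the periodic profile $\Gamma=(\Gamma_1,\Gamma_2)$ satisfying $\frac{1}{2}|\partial_t\Gamma_1|^2+\partial_t\Gamma_2 = a^2$ together with the uniform bounds (\ref{dwa}). Then I would define the one-dimensional high-frequency ansatz in the direction $\eta$ as
\bees
\tilde v_\lambda(x) = v(x) + \frac{1}{\lambda}\Gamma_1\bigl(x,\lambda x\cdot\eta\bigr),\qquad \tilde w_\lambda(x) = w(x) + \frac{1}{\lambda}\Gamma_2\bigl(x,\lambda x\cdot\eta\bigr)\,\eta \;-\; \frac{1}{\lambda}\Gamma_1\bigl(x,\lambda x\cdot\eta\bigr)\,\nabla v(x).
\eees
The $\Gamma_2\eta$ piece is the natural secondary corrector that produces the required $\partial_t\Gamma_2\,\eta\otimes\eta$ increment in $\sym\nabla\tilde w_\lambda$, while the extra ``Nash twist'' $-\frac{1}{\lambda}\Gamma_1\nabla v$ is engineered precisely to cancel the otherwise dangerous mixed term generated by $\frac{1}{2}\nabla\tilde v_\lambda\otimes\nabla\tilde v_\lambda$.

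The main step is to verify the cancellation algebraically. The chain rule gives $\nabla\tilde v_\lambda = \nabla v + \partial_t\Gamma_1\,\eta + \frac{1}{\lambda}\nabla_x\Gamma_1$, and
\bees
\nabla\tilde w_\lambda = \nabla w + \partial_t\Gamma_2\,\eta\otimes\eta + \tfrac{1}{\lambda}\eta\otimes\nabla_x\Gamma_2 - \partial_t\Gamma_1\,\nabla v\otimes\eta - \tfrac{1}{\lambda}\nabla v\otimes\nabla_x\Gamma_1 - \tfrac{1}{\lambda}\Gamma_1\,\nabla^2 v.
\eees
Upon taking the symmetric part and expanding $\frac{1}{2}\nabla\tilde v_\lambda\otimes\nabla\tilde v_\lambda$, the two $\pm\partial_t\Gamma_1\,\sym(\nabla v\otimes\eta)$ contributions cancel, and so do the $\pm\frac{1}{\lambda}\sym(\nabla v\otimes\nabla_x\Gamma_1)$ terms. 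The defining identity $\frac{1}{2}|\partial_t\Gamma_1|^2+\partial_t\Gamma_2=a^2$ then identifies the leading coefficient of $\eta\otimes\eta$ as $a^2$, and what remains is the error
\bees
\mathcal{E}_\lambda(x) = \tfrac{1}{\lambda}\sym\bigl(\eta\otimes\nabla_x\Gamma_2\bigr) + \tfrac{1}{\lambda}\partial_t\Gamma_1\,\sym\bigl(\eta\otimes\nabla_x\Gamma_1\bigr) + \tfrac{1}{2\lambda^2}\nabla_x\Gamma_1\otimes\nabla_x\Gamma_1 - \tfrac{1}{\lambda}\Gamma_1\,\nabla^2 v.
\eees
Each summand is controlled pointwise via (\ref{dwa}) by $\frac{1}{\lambda}a|\nabla a|$, $\frac{1}{\lambda}a|\nabla a|$, $\frac{1}{\lambda^2}|\nabla a|^2$, and $\frac{1}{\lambda}a|\nabla^2 v|$ respectively; passing to the sup norm yields exactly (\ref{stepresult1}).

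The $\mathcal{C}^0$ bounds (\ref{stepresult2}) follow at once from $\|\Gamma_1\|_0\leq C\|a\|_0$ and $\|\Gamma_2\|_0\leq C\|a\|_0^2$ applied to the displayed ansatz. For the pointwise gradient inequalities (\ref{stepresult3}), I would revisit the same expansions of $\nabla\tilde v_\lambda$ and $\nabla\tilde w_\lambda$, this time retaining the sharp pointwise bounds $|\partial_t\Gamma_1(x,t)|\leq Ca(x)$ and $|\partial_t\Gamma_2(x,t)|\leq Ca(x)^2\leq Ca(x)\|a\|_0$ for the leading pieces, together with uniform sup-norm estimates on the $\frac{1}{\lambda}$ correctors. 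The main ``difficulty'' is not technical but one of ansatz design: without the twist $-\frac{1}{\lambda}\Gamma_1\nabla v$ in $\tilde w_\lambda$, the cross contribution $\partial_t\Gamma_1\,\sym(\nabla v\otimes\eta)$ would persist at order $\mathcal{O}(a)$, independent of $\lambda$, and would wreck (\ref{stepresult1}). Once this corrector is in place, the entire proposition reduces to the chain-rule computation above and careful collection of error terms.
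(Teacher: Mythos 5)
Your proposal is essentially identical to the paper's own proof: the ansatz for $\tilde v_\lambda$ and $\tilde w_\lambda$, the cancellation of the $\pm\partial_t\Gamma_1\,\sym(\nabla v\otimes\eta)$ and $\pm\frac{1}{\lambda}\sym(\nabla v\otimes\nabla_x\Gamma_1)$ pairs, the role of the identity $\frac{1}{2}|\partial_t\Gamma_1|^2+\partial_t\Gamma_2=a^2$, and the resulting error term $\mathcal{E}_\lambda$ all match the paper exactly. The remaining estimates follow from (\ref{dwa}) precisely as you describe, so this is a correct proof along the same route.
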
 
\begin{proof}
Using the $1$-periodic functions $\Gamma_i$ from Lemma \ref{convex}, we
define $\tilde v_\lambda$ and $\tilde w_\lambda$ as 
$\lambda$-periodic perturbations of $v$, $w$ in the direction $\eta$:
\bee\label{step-ansatz} 
\begin{split}
\tilde v_\lambda  (x) & = v(x) + \frac1{\lambda}  \Gamma_1 (x, \lambda x\cdot\eta) \\
\tilde w_\lambda  (x) & =  w(x) - \frac1{\lambda} \Gamma_1 (x, \lambda x \cdot
\eta) \nabla v(x) + \frac 1\lambda \Gamma_2 (x, \lambda x\cdot \eta) \eta. 
\end{split}
\eee  
The error estimates in \eqref{stepresult2} follow immediately from
(\ref{dwa}). The pointwise error estimates \eqref{stepresult3} follow
from (\ref{dwa}) in view of:
\bees
\begin{split}
\nabla \tilde v_\lambda  (x)  = & ~\nabla v(x) + \frac1{\lambda}
\nabla_x \Gamma_1 (x, \lambda x\cdot \eta) + \partial_t \Gamma_1 (x,
\lambda x \cdot \eta) \eta, \\ 
\nabla \tilde w_\lambda  (x)  =  & ~\nabla w(x) - \frac1{\lambda}\nabla v (x) \otimes
\nabla_x  \Gamma_1 (x, \lambda x \cdot \eta) 
-   \partial_t \Gamma_1 (x, \lambda x\cdot \eta)  \eta \otimes  \nabla v(x)
- \frac1{\lambda} \Gamma_1 (x, \lambda x \cdot \eta) \nabla^2 v(x) \\
& \qquad\quad \quad
+ \frac 1\lambda \eta\otimes \nabla_x
\Gamma_2 (x, \lambda x\cdot \eta) + \partial_t \Gamma_2 (x, \lambda x\cdot \eta)  \eta \otimes \eta.
\end{split}
\eees 
Finally, we compute:
\bees
\begin{split}
\frac 12\nabla \tilde v_\lambda  (x) \otimes &\nabla \tilde v_\lambda  (x)  -
\frac 12 \nabla v(x) \otimes \nabla v(x) \\ & = 
\boxed{\frac{1}{\lambda} \sym\big(\nabla v(x)\otimes \nabla_x  \Gamma_1 (x, \lambda x \cdot
\eta)\big) + \partial_t \Gamma_1 (x, \lambda x\cdot \eta) \sym
\big(\nabla v(x)\otimes\eta\big)} \\ & \quad 
+ \boxed{\boxed{\frac 12 |\partial_t \Gamma_1 (x, \lambda x\cdot \eta)|^2 \eta
\otimes \eta}}  \\ & \quad + \frac{1}{\lambda} \partial_t\Gamma_1(x, \lambda x\cdot\eta)
\sym\big(\eta\otimes \nabla_x\Gamma_1(x,\lambda x\cdot\eta)\big)  
+ \frac{1}{2\lambda^2}  \nabla_x\Gamma_1(x,
\lambda x\cdot\eta) \otimes \nabla_x\Gamma_1(x, \lambda x\cdot\eta), 
\end{split}
\eees 
and: 
\bees
\begin{split}
\sym \nabla \tilde w_\lambda & (x) - \sym \nabla w(x)  \\ & =  
\boxed{-\frac{1}{\lambda} \sym\big(\nabla v(x)\otimes \nabla_x  \Gamma_1 (x, \lambda x \cdot
\eta)\big) - \partial_t \Gamma_1 (x, \lambda x\cdot \eta) \sym
\big(\nabla v(x)\otimes\eta\big)} \\ & \quad 
- \frac{1}{\lambda} \Gamma_1(x, \lambda x\cdot\eta)\nabla^2 v(x)
+ \frac{1}{\lambda} \sym\big(\eta\otimes \nabla_x\Gamma_2(x,\lambda
x\cdot\eta)\big)  \\ & \quad 
+ \boxed{\boxed{\partial_t\Gamma_2 (x, \lambda x\cdot \eta)  \eta \otimes \eta}}.
\end{split}
\eees 
We see that the terms in boxes cancel out, while the terms in double
boxes add up to $a(x)^2\eta\otimes\eta$ in virtue of (\ref{jed}). Consequently:
\bees
\begin{split}
&\Big(\frac 12 \nabla \tilde v_\lambda (x) \otimes \nabla \tilde
v_\lambda (x) + \sym \nabla \tilde w_\lambda (x)\Big)   - \Big(\frac
12 \nabla  v (x)\otimes \nabla v (x) + \sym \nabla w (x) + a(x)^2 \eta
\otimes \eta\Big) \\ & \quad = 
\frac{1}{\lambda} \Big( \partial_t\Gamma_1(x, \lambda x\cdot\eta)
\sym\big(\eta\otimes \nabla_x\Gamma_1(x,\lambda x\cdot\eta)\big)  -
\Gamma_1(x, \lambda x\cdot\eta)\nabla^2 v(x) + \sym\big(\eta\otimes \nabla_x\Gamma_2(x,\lambda
x\cdot\eta)\big)\Big) \\ & \qquad + \frac{1}{2\lambda^2}  \nabla_x\Gamma_1(x,
\lambda x\cdot\eta) \otimes \nabla_x\Gamma_1(x, \lambda x\cdot\eta),
\end{split}
\eees 
which implies \eqref{stepresult1} in view of the bounds in (\ref{dwa}).
\end{proof} 
 
\bigskip

We now complete the \lq stage' in the approximating sequence construction.

\begin{proposition}\label{stage} 
Let $\Omega\subset\mathbb{R}^2$ be an open and bounded domain.
Let $v \in \mathcal{C}^\infty (\bar \Omega)$, $w\in \mathcal{C}^\infty(\bar \Omega, \R^2)$  
and $A\in \mathcal{C}^\infty (\bar \Omega,  \R^{2\times 2}_{sym})$ be
such that  the deficit function $\mathcal{D}$ defined below is positive definite
in $\bar\Omega$:
\bee\label{wA-inequ-stage}
\exists c>0\qquad \mathcal{D}= A- \big(\frac 12 \nabla v \otimes
\nabla v + \sym \nabla w \big) > c \mathrm{Id}_2 \quad \mbox{ in } \bar\Omega.
\eee 
Fix $\varepsilon > 0$. 
Then there exist $\tilde v \in \mathcal{C}^\infty (\bar \Omega)$ and
$\tilde w\in \mathcal{C}^\infty(\bar \Omega, \R^2)$ such that the new
deficit $\tilde{\mathcal{D}}$ is still positive definite, and bounded by
$\varepsilon$ together with the error in the approximations $\tilde
v$, $\tilde w$, namely:
\bee\label{stage4}
\exists \tilde c>0 \qquad \tilde{\mathcal{D}} = A- \big(\frac 12
\nabla \tilde v \otimes  \nabla \tilde v +\sym \nabla \tilde w \big) >
\tilde c \mathrm{Id}_2 \quad \mbox{ in } \bar\Omega,
\eee 
\bee\label{stage12}
\|\tilde{\mathcal{D}}\|_0 < \varepsilon \quad \mbox{ and } \quad
\| \tilde v - v\|_0 +  \|\tilde w - w\|_0 < \varepsilon.
\eee 
Moreover, we have the following uniform gradient error bounds:
\bee\label{stage3}
\|\nabla \tilde v - \nabla v\|_0  \le CN_0^{1/2} \|{\mathcal{D}}\|^{1/2}_0 \quad \mbox{ and } \quad
\|\nabla \tilde w - \nabla w\|_0 \le CN_0 (\|\nabla v\|_0 +
\|\mathcal{D}\|_0^{1/2}) \|{\mathcal{D}}\|^{1/2}_0,   
\eee 
where the constant $N_0\in\mathbb{N}$ is as in Lemma \ref{decompose}.
\end{proposition}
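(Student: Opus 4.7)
\medskip

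\noindent\textbf{Proof proposal.} The plan is to run a finite sequence of Nash--Kuiper \emph{steps} (Proposition \ref{step}) that cancel $\mathcal{D}$ up to a controlled positive residual. First, I would fix a small $\delta>0$ with $\delta < \min\{c/2, \varepsilon/4\}$, and replace $\mathcal{D}$ by
$$\mathcal{D}_0 := \mathcal{D} - \delta \mathrm{Id}_2,$$
which is still smooth and positive definite in $\bar\Omega$, hence takes values in a compact subset $K\subset\mathbb{R}^{2\times 2}_{sym,>}$. Lemma \ref{decompose}(ii) then provides a \emph{finite} index set $J(K)\subset\mathbb N$, of cardinality at most $N_0$, such that
$$\mathcal{D}_0(x) = \sum_{k\in J(K)} a_k(x)^2\, \eta_k\otimes\eta_k,\qquad a_k := \phi_k\circ\mathcal{D}_0\in\mathcal{C}^\infty(\bar\Omega,[0,\infty)).$$
Enumerate $J(K)=\{1,\ldots,N\}$ with $N\le N_0$.

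Next, I would iterate Proposition \ref{step} $N$ times. Set $(v^{(0)},w^{(0)}):=(v,w)$ and, for $k=1,\ldots,N$, define $(v^{(k)},w^{(k)}):=(\tilde v_{\lambda_k},\tilde w_{\lambda_k})$ applied to $(v^{(k-1)},w^{(k-1)}, a_k,\eta_k)$. By (\ref{stepresult1}), the resulting symmetric tensor satisfies
$$\frac 12\nabla v^{(k)}\otimes\nabla v^{(k)} + \sym\nabla w^{(k)} = \frac 12\nabla v^{(k-1)}\otimes\nabla v^{(k-1)} + \sym\nabla w^{(k-1)} + a_k^2\,\eta_k\otimes\eta_k + E_k,$$
where $\|E_k\|_0$ is controlled by $\lambda_k^{-1}$ times a constant depending only on $a_k$ and $v^{(k-1)}$. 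The frequencies $\lambda_k$ are to be chosen recursively, after $v^{(k-1)},w^{(k-1)}$ are constructed, so that $\|E_k\|_0 < \delta/N$ and, simultaneously, the $C^0$ errors in (\ref{stepresult2}) and the second (1/$\lambda$) terms in (\ref{stepresult3}) are bounded by $\varepsilon/N$. This is always possible because at each step the constant depends only on the already-fixed data. Set $\tilde v:=v^{(N)}$, $\tilde w:=w^{(N)}$; smoothness is preserved throughout by the explicit ansatz (\ref{step-ansatz}).

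Telescoping the identity above and using $\sum_k a_k^2\eta_k\otimes\eta_k=\mathcal{D}_0 = \mathcal{D}-\delta\mathrm{Id}_2 = A - \bigl(\tfrac12\nabla v\otimes\nabla v + \sym\nabla w\bigr) - \delta\mathrm{Id}_2$, I obtain
$$\tilde{\mathcal{D}} = A - \Bigl(\frac 12\nabla\tilde v\otimes\nabla\tilde v+\sym\nabla\tilde w\Bigr) = \delta\mathrm{Id}_2 - \sum_{k=1}^{N} E_k.$$
Since $\|\sum E_k\|_0 < \delta$, this gives both the positivity $\tilde{\mathcal{D}}>0$ and the smallness $\|\tilde{\mathcal{D}}\|_0<2\delta<\varepsilon$ required in (\ref{stage4})--(\ref{stage12}), while the $C^0$ approximation bounds in (\ref{stage12}) follow from summing (\ref{stepresult2}) over $k$.

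Finally, for the gradient bounds (\ref{stage3}), I would sum the pointwise estimates in (\ref{stepresult3}). The ``bad'' rank-one terms contribute
$$|\nabla\tilde v(x)-\nabla v(x)|\le C\sum_{k=1}^{N} a_k(x) + \text{(small)},$$
and the key observation is that $\sum_k a_k(x)^2 = \mathrm{tr}\,\mathcal{D}_0(x)\le 2\|\mathcal{D}\|_0$, so by Cauchy--Schwarz
$$\sum_{k=1}^{N} a_k(x) \le N^{1/2}\Bigl(\sum_k a_k(x)^2\Bigr)^{1/2} \le C\,N_0^{1/2}\|\mathcal{D}\|_0^{1/2},$$
which is precisely the factor appearing in the first bound of (\ref{stage3}). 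The bound for $\nabla\tilde w-\nabla w$ is obtained analogously, after using $\|\nabla v^{(k-1)}\|_0\le \|\nabla v\|_0 + CN_0^{1/2}\|\mathcal{D}\|_0^{1/2}$ uniformly in $k$, which yields an extra factor of $\|\nabla v\|_0+\|\mathcal{D}\|_0^{1/2}$ and a second $N_0^{1/2}$ from Cauchy--Schwarz, giving the claimed $N_0$.

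The main subtlety is the recursive choice of the frequencies $\lambda_k$: the constant governing $\|E_k\|_0$ involves $\|\nabla^2 v^{(k-1)}\|_0$, which is \emph{not} controlled by the previous step's $C^1$ estimates and can blow up at each iteration. One must therefore select each $\lambda_k$ after $v^{(k-1)}$ is explicitly constructed, large enough to absorb the arbitrarily large derivatives of $v^{(k-1)}$. The uniform $C^1$ estimates in (\ref{stage3}), in contrast, are $\lambda$-independent and survive the process.
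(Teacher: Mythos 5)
Your proof follows the paper's argument very closely: decompose (a slightly shrunk version of) $\mathcal{D}$ by Lemma~\ref{decompose}, run the Nash--Kuiper step of Proposition~\ref{step} once for each rank-one direction with recursively chosen frequencies, and estimate $\sum_k a_k(x)$ by Cauchy--Schwarz. The only real difference from the paper is cosmetic: you subtract $\delta\,\mathrm{Id}_2$ to create the buffer of positive definiteness, whereas the paper rescales $b_k$ by $(1-\delta)^{1/2}$ so that the residual $\delta\mathcal{D}>c\delta\,\mathrm{Id}_2$ supplies the buffer. Both variants work.

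There is, however, one genuine error. You claim that Lemma~\ref{decompose}(ii) provides a finite index set $J(K)$ ``of cardinality at most $N_0$'' and enumerate $J(K)=\{1,\ldots,N\}$ with $N\le N_0$. This is not what the lemma says: $N_0$ in part~(i) bounds the number of indices $k$ with $\phi_k(G)\neq 0$ \emph{at each fixed} $G$, while part~(ii) merely asserts $J(K)$ is finite, with no a priori bound relating $|J(K)|$ to $N_0$. In general $N=|J(K)|$ can greatly exceed $N_0$. Consequently, your Cauchy--Schwarz inequality $\sum_{k=1}^N a_k(x)\le N^{1/2}(\sum_k a_k(x)^2)^{1/2}$ only gives the prefactor $N^{1/2}$, not $N_0^{1/2}$; the jump to $N_0^{1/2}$ in the very next inequality is unjustified as written. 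The correct argument, used by the paper, is to apply Lemma~\ref{decompose}(i) \emph{pointwise}: at each $x$, at most $N_0$ of the $a_k(x)$ are nonzero, so $\sum_{k=1}^N a_k(x)=\sum_{k:\,a_k(x)\neq 0}a_k(x)\le N_0^{1/2}\bigl(\sum_k a_k(x)^2\bigr)^{1/2}$. The rest of your argument, including the recursive choice of $\lambda_k$ to absorb the uncontrolled $\|\nabla^2 v^{(k-1)}\|_0$, is sound and matches the paper.
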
 
\begin{proof}  
{\bf 1.} Note that the image $\mathcal{D}(\bar\Omega)$ is a compact
subset of $\mathbb{R}^{2\times 2}_{sym, >}$. By Lemma \ref{decompose}
and  rearranging the indices, if needed, so that
$J(\mathcal{D}(\bar\Omega)) = \{1\ldots N\}$ in (ii), we get:
\bee\label{Bdecompose} 
\forall x\in\bar\Omega \qquad \mathcal{D}(x)= \sum_{k=1}^N
b_k(x)^2\eta_k \otimes \eta_k \quad \mbox{ where } \quad b_k =
\phi_k\circ\mathcal{D}\in\mathcal{C}^\infty(\bar\Omega). 
\eee 
Let now $a_k = (1-\delta)^{1/2}b_k$, with $\delta>0$ so small that:
\begin{equation}\label{1}
\mathcal{D} - \sum_{k=1}^Na_k^2\eta_k\otimes\eta_k =
\delta\mathcal{D}\quad\mbox{ and } \quad \delta\|\mathcal{D}\|_0<\frac{\varepsilon}{2}.
\end{equation}
We set $v_1=v$, $w_1=w$. For $k=1\ldots N$  we inductively define
$v_{k+1}\in\mathcal{C}^\infty(\bar\Omega)$ and $w_{k+1}\in\mathcal{C}^\infty(\bar\Omega, \mathbb{R}^2)$,
by means of Proposition \ref{step} applied to $v_k,
w_k, a_k, \eta_k$ and with $\lambda_k > 1$ sufficiently large
as indicated below. We then finally set $\tilde v = v_{N+1}$ and
$\tilde w=w_{N+1}$.

\smallskip

{\bf 2.} To prove the estimates (\ref{stage4}) - (\ref{stage3}), we start by
observing that since by Lemma \ref{decompose} (i) at most $N_0$ terms
in the expansion (\ref{Bdecompose}) are nonzero, there holds:
\begin{equation}\label{2}
\begin{split}
\sum_{k=1}^N a_k(x) \leq \sum_{k=1}^N b_k(x) & \leq
 N_0^{1/2}\Big(\sum_{k=1}^N b_k(x)^2\Big)^{1/2} = N_0^{1/2}
\big(\mbox{Trace }\mathcal{D}(x)\big)^{1/2} \\ &  \leq N_0^{1/2}
\big(\sqrt{2}~|\mathcal{D}(x)|\big)^{1/2} \leq C N_0^{1/2} \|\mathcal{D}\|_0^{1/2}.
\end{split}
\end{equation}
Further, by (\ref{stepresult1}) and (\ref{1}):
\begin{equation*}
\begin{split}
\tilde{\mathcal{D}} & = \mathcal{D} - \Big(\big(\frac{1}{2}\nabla
\tilde v\otimes\nabla\tilde v + \sym\nabla\tilde w\big) - \big(\frac{1}{2}\nabla
v\otimes\nabla v + \sym\nabla w\big)\Big) \\ &
= \mathcal{D} - \sum_{k=1}^N \Big(\big(\frac{1}{2}\nabla
v_{k+1}\otimes\nabla v_{k+1} + \sym\nabla w_{k+1}\big) - \big(\frac{1}{2}\nabla
v_k\otimes\nabla v_k + \sym\nabla w_k\big)\Big) \\ &
= \Big(\mathcal{D} - \sum_{k=1}^N a_k^2\eta_k\otimes\eta_k\Big)  \\ &
\qquad - 
\sum_{k=1}^N \Big(\big(\frac{1}{2}\nabla
v_{k+1}\otimes\nabla v_{k+1} + \sym\nabla w_{k+1}\big) - \big(\frac{1}{2}\nabla
v_k\otimes\nabla v_k + \sym\nabla w_k +
a_k^2\eta_k\otimes\eta_k\big)\Big) \\ & 
= \delta \mathcal{D} + \sum_{k=1}^N \mathcal{O}\Big(\frac{1}{\lambda_k}
\big(\|a_k\|_0 \|\nabla a_k\|_0 + \|\nabla a_k\|_0^2 + \|a_k\|_0 \|\nabla^2v_k\|_0\big)\Big).
\end{split}
\end{equation*}
Choosing at each step $\lambda_k$ sufficiently large
with respect to the given $a_k$ and the already generated $v_k$, we
may ensure the smallness of the error term in the right hand side
above and hence the positive definiteness of $\tilde{\mathcal{D}}$ in
(\ref{stage4}), because of the uniform positive definiteness of:
$\delta\mathcal{D} > c\delta\mbox{Id}_2$ in $\bar\Omega$. Likewise,
the first inequality in (\ref{stage12}) follows already when the error
is smaller than $\epsilon/2$.

The same reasoning proves the error bounds on $\tilde v-v$ and $\tilde
w - w$ in (\ref{stage12}), in view of (\ref{stepresult2}):
\begin{equation*}
\begin{split}
\tilde v(x) - v(x) & = \sum_{k=1}^N (v_{k+1}(x) - v_k(x)) = \sum_{k=1}^N \mathcal{O}\Big(\frac{1}{\lambda_k}
\|a_k\|_0\Big), \\
\tilde w(x) - w(x) & = \sum_{k=1}^N (w_{k+1}(x) - w_k(x)) = \sum_{k=1}^N \mathcal{O}\Big(\frac{1}{\lambda_k}
\big(\|a_k\|_0^2 +  \|\nabla a_k\|_0 \|\nabla v_k\|_0 \big)\Big).
\end{split}
\end{equation*}

\smallskip

{\bf 3.} To obtain the first error bound in (\ref{stage3}), use (\ref{stepresult3}) and (\ref{2}):
\begin{equation*}
|\nabla\tilde v(x) - \nabla v(x)| \leq \sum_{k=1}^N |\nabla
v_{k+1}(x) -\nabla v_k(x)| \leq C\sum_{k=1}^N a_k(x) + \sum_{k=1}^N \mathcal{O}\Big(\frac{1}{\lambda_k}
\|a_k\|_0^2 \Big)\leq C N_0^{1/2} \|\mathcal{D}\|_0^{1/2},
\end{equation*}
where again, by adjusting $\lambda_k$ at each step, we ensure the
controllability of the error term with respect to the nonnegative
quantity $N_0^{1/2} \|\mathcal{D}\|_0^{1/2}$. Likewise:
\begin{equation*}%\label{cos}
\forall k=1\ldots N \qquad |\nabla v_k(x)| \leq |\nabla v(x)| + \sum_{i=1}^{k-1} |\nabla
v_{i+1}(x) -\nabla v_i(x)| \leq \|\nabla v\|_0 + C N_0^{1/2} \|\mathcal{D}\|_0^{1/2},
\end{equation*}
and obviously by (\ref{2}):
\begin{equation*}%\label{cos2}
a_k(x) \leq \sum_{i=1}^{k-1} a_i(x) \leq C N_0^{1/2} \|\mathcal{D}\|_0^{1/2},
\end{equation*}
which yield by (\ref{2}):
\begin{equation*}
\begin{split}
\sum_{k=1}^N a_k(x) \big(\|a_k\|_0 + \|\nabla v_k\|_0\big) & \leq
C\Big(\|\nabla v\|_0 + N_0^{1/2}\|\mathcal{D}\|_0^{1/2}\Big)
\sum_{k=1}^N a_k(x)  \leq C N_0\big(\|\nabla v\|_0 +
\|\mathcal{D}\|_0^{1/2}\big) \|\mathcal{D}\|_0^{1/2}.
\end{split}
\end{equation*}
Consequently and by (\ref{stepresult3}), there follows the last gradient error bound in
(\ref{stage3}):
\begin{equation*}
\begin{split}
|\nabla\tilde w(x) - \nabla w(x)| & \leq \sum_{k=1}^N |\nabla
w_{k+1}(x) -\nabla w_k(x)| \\ & \leq C\sum_{k=1}^N a_k(x) \big(\|a_k\|_0 +
\|\nabla v_k\|_0\big) \\ & \qquad 
+ \sum_{k=1}^N \mathcal{O}\Big(\frac{1}{\lambda_k}\big (
\|a_k\|_0\|\nabla a_k\|_0  + \|a_k\|_0\|\nabla^2 v_k\|_0 + \|\nabla
a_k\|_0\|\nabla v_k\|_0\big) \Big) \\ & \leq C N_0\big(\|\nabla v\|_0 +
\|\mathcal{D}\|_0^{1/2}\big) \|\mathcal{D}\|_0^{1/2}.
\end{split}
\end{equation*}
This concludes the proof of the stage approximation construction.
\end{proof}  

\bigskip  
   
We now finally give:

\medskip

\noindent {\bf Proof of Theorem \ref{weak2oi}.}  

{\bf 1.} Fix $\varepsilon >0$. It suffices to construct
$v\in\mathcal{C}^1(\bar\Omega)$ and $w\in\mathcal{C}^1(\bar\Omega,
\mathbb{R}^2)$ such that:
\begin{equation}\label{3}
A_0 = \frac{1}{2} \nabla v\otimes\nabla v + \sym\nabla w \qquad
\mbox{in } \bar\Omega
\end{equation}
and:
\begin{equation}\label{4}
\|v-v_0\|_0 + \|w-w_0\|_0 < \varepsilon.
\end{equation}
The exact solution $(v, w)$ of (\ref{3}) will be obtained as the
$\mathcal{C}^1$ limit of sequences of succesive approximations
$\{v_k\in\mathcal{C}^\infty(\bar\Omega),~
w_k\in\mathcal{C}^\infty(\bar\Omega, \mathbb{R}^2)\}_{k=0}^\infty$,
where $v_0$ and $w_0$ are given in the statement of the Theorem and
satisfy (\ref{wA-inequ}), while $v_{k+1}$ and $w_{k+1}$ are defined
inductively by means of Proposition \ref{stage} applied to $v_k, w_k$
and $\varepsilon_k>0$, under the following requirement:
\bee\label{eps} 
\sum_{k=1}^\infty \varepsilon_k < \varepsilon \quad \mbox{ and }  \quad
\sum_{k=1}^\infty \varepsilon_k^{1/2} < 1.
\eee 

In agreement with our notation convention, we introduce the $k$-th
deficit $\mathcal{D}_k$, which is positive definite by (\ref{stage4}):
$$\forall k\geq 0 \qquad \mathcal{D}_k := A_0 - \big(\frac{1}{2}
\nabla v_k\otimes\nabla v_k + \sym\nabla w_k \big)\in
\mathcal{C}^\infty (\bar\Omega, \mathbb{R}^{2\times 2}_{sym, >}).$$ 
By (\ref{stage12}) it follows that:
$$\|v_k - v\|_0 + \|w_k - w\|_0 \leq \sum_{i=0}^{k-1}\|v_{i+1} -
v_i\|_0 + \sum_{i=0}^{k-1}\|w_{i+1} - w_i\|_0 <
\sum_{i=1}^{k-1}\epsilon_i < \sum_{i=1}^\infty\epsilon_i.$$
Thus, $\{v_k\}_{k=0}^\infty$ and $\{w_k\}_{k=0}^\infty$ converge
uniformly in $\bar\Omega$, respectively, to $v$ and $w$ which satisfy
(\ref{4}) in view of (\ref{eps}).

\smallskip

{\bf 2.} We now show that this convergence is in $\mathcal{C}^1$. Indeed, by (\ref{stage12}):
$\|\mathcal{D}_k\|_0<\epsilon_k$, so by (\ref{stage3}):
\bee\label{5}
\|\nabla v_{k+m} - \nabla v_{k}\|_0 \leq \sum_{i=k}^{m-1} \|\nabla
v_{i+1} - \nabla v_i\|_0  \leq C N_0^{1/2} \sum_{i=k}^{m-1} \|\mathcal{D}_i\|_0^{1/2}
\leq C N_0^{1/2} \sum_{i=k}^{m-1} \varepsilon^{1/2}_{i}.
\eee 
In particular, in view of (\ref{eps}) the sequence $\{\|\nabla
v_k\|_0\}_{k=0}^\infty$ is bounded, so we further have:
\bee\label{6}
\begin{split}
\|\nabla w_{k+m} - \nabla w_{k}\|_0 & \leq \sum_{i=k}^{m-1} \|\nabla
w_{i+1} - \nabla w_i\|_0  \\ & \leq C N_0 \sum_{i=k}^{m-1} \big(\|\nabla
v_i\|_0 + \|\mathcal{D}_i\|_0^{1/2}\big) \|\mathcal{D}_i\|_0^{1/2}
\leq \tilde C N_0\sum_{i=k}^{m-1} \varepsilon^{1/2}_{i},
\end{split}
\eee 
where the constant $\tilde C$ is independent of $k$ and $m$. Through
the above assertions (\ref{5}) and (\ref{6}), in view of the second
condition in (\ref{eps}), we conclude that $\{v_k\}_{k=1}^\infty$ and
$\{w_k\}_{k=0}^\infty$ are Cauchy sequences that converge in
$\mathcal{C}^1(\bar\Omega)$ to $v\in \mathcal{C}^1(\bar\Omega)$ and
$w\in \mathcal{C}^1(\bar\Omega, \mathbb{R}^2)$, respectively. Finally:
$$\|A_0 - \big(\frac{1}{2} \nabla v\otimes\nabla v + \sym\nabla w
\big)\|_0 = \lim_{k\to\infty}\|\mathcal{D}_k\|_0 \leq
\lim_{k\to\infty}\varepsilon_k = 0$$
implies (\ref{3}) and completes the proof of Theorem \ref{weak2oi}.
\endproof

\medskip

\begin{remark}\label{remi}
In addition to the uniform convergence postulated in
Theorem \ref{weak2oi}, one also has:
$$\forall n \qquad \|\nabla v_n\|_0\leq \|\nabla v_0\|_0 + CN_0^{1/2}.$$
Using notation as in the proof above and
recalling (\ref{5}) and (\ref{eps}), this bound follows by:
$$\|\nabla v - \nabla v_0\|_0 = \lim_{k\to\infty} \|\nabla v_k -
\nabla v_0\|_0 \leq \lim_{k\to\infty}
\Big(CN_0^{1/2}\sum_{i=0}^{k-1}\epsilon_i^{1/2}\Big) \leq CN_0^{1/2}.$$
\end{remark}

\section{The $\mathcal{C}^{1,\alpha}$ approximations - a proof of
  Theorem \ref{weakMA},  preliminary results and some heuristics
  towards the proof of Theorem \ref{w2oi-hld}.} \label{C10}

Theorem \ref{weakMA} follows easily from Theorem \ref{w2oi-hld}, that will be proved in the next section.

\medskip

\noindent {\bf Proof of Theorem \ref{weakMA}.}

\noindent Since $\mathcal{C}^1(\bar\Omega)$ is dense in
$\mathcal{C}^0(\bar\Omega)$, we may without loss of generality assume
that $v_0\in\mathcal{C}^1(\bar\Omega)$. Set $w_0=0$ and $A_0 =
(\lambda+c)\mbox{Id}\in\mathcal{C}^{0,\beta}(\bar\Omega,\mathbb{R}^{2\times
2}_{sym})$ where $c$ is a constant and $\lambda$ is constructed as follows.

Extend the function $f$ to $f\in L^p(\Omega_\epsilon)$ defined on an open smooth set
$\Omega_\epsilon \supset \bar\Omega$ and solve:
$$ - \Delta\lambda = f \quad \mbox{ in } \Omega_\epsilon, \qquad \lambda = 0 
\quad \mbox{ on } \partial\Omega_\epsilon.$$
Since $\lambda\in W^{2,p}(\Omega_\epsilon)$, then Morrey's Theorem
implies that $\lambda\in\mathcal{C}^{0,\beta}(\bar\Omega)$ for every
$\beta \in (0,1)$ when $p\geq 2$, and for $\beta = 2- \frac 2p$ when $p\in (1, 2)$.
Also, for $c$ large enough, condition (\ref{wA-inequ-holder})
on the positive definiteness of the defect is satisfied. On the other hand:
$$-\mbox{curl }\mbox{curl} A_0 = -\Delta(\lambda + c) = f,$$
so the result follows directly from Theorem \ref{w2oi-hld}, since
$\frac{2-2/p}{2}\geq \frac 17$ is equivalent to $p\geq \frac 76$.
\endproof

\bigskip

Our next simple Corollary concerns the steady-state Euler equations
with the exchanged roles of the given pressure $q$ and the unknown
forcing term $\nabla^\perp g$.

\begin{corollary}\label{corfluid}
Let $\Omega\subset\R^2$ be an open and bounded domain. Let
$q\in\mathcal{C}^{0,\beta}(\bar\Omega)$ for some $\beta\in (0,1)$ and
fix $\epsilon > 0$.
Then for every exponent $\alpha$ in the range: $0 < \alpha < \min\{\frac 17,
  \frac{\beta}{2}\}$, there exist sequences
$\{u_n\in\mathcal{C}^{0,\alpha}(\bar\Omega, \R^2)\}_{n=1}^\infty$ and
$\{g_n\in\mathcal{C}^{0,\alpha}(\bar\Omega)\}_{n=1}^\infty$  solving
in $\Omega$ the following system:
\begin{equation}\label{cpies}
\mathrm{div} (u_n\otimes u_n) - \nabla q = \nabla^\perp g_n, \qquad
\mathrm{div }~ u_n = 0,
\end{equation}
and such that $u_n = \nabla^\perp v_n$ and $g_n = \mathrm{curl }~ w_n$, where each
$v_n\in\mathcal{C}^{1,\alpha}(\bar\Omega)$ and
$w_n\in\mathcal{C}^1(\bar\Omega, \R^2)$, while the sequence
$\{v_n\}_{n=1}^\infty$ is dense in $\mathcal{C}^0(\bar\Omega)$ and
$\|w_n\|_0 < \epsilon$ for every $n\geq 1$.
\end{corollary}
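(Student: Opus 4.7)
The plan is to deduce the corollary from Theorem \ref{w2oi-hld} via a stream-function reformulation. The key idea is that any solution $(v,w)$ of the Monge--Amp\`ere compatibility relation $\frac{1}{2}\nabla v\otimes\nabla v + \sym \nabla w = \lambda\,\mathrm{Id}_2$ produces a solution of the target Euler-type system \eqref{cpies} via $u = \nabla^\perp v$ and $g = \mathrm{curl}\, w$, provided $\lambda$ is chosen so that $\nabla q = 2\nabla \lambda$. I would accordingly set $A_0 = (q/2 + c)\,\mathrm{Id}_2$, where the constant $c>0$ is fixed large enough to make the deficit in \eqref{wA-inequ-holder} strictly positive definite; note that $A_0 \in \mathcal{C}^{0,\beta}(\bar\Omega, \R^{2\times 2}_{sym})$ inherits the H\"older regularity of $q$.

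The algebraic core of the verification is the following. Writing $J = \bigl(\begin{smallmatrix} 0 & -1 \\ 1 & 0 \end{smallmatrix}\bigr)$ and observing $\nabla^\perp v \otimes \nabla^\perp v = J(\nabla v \otimes \nabla v)J^T$, the compatibility relation transforms, using $J\,\mathrm{Id}_2\,J^T = \mathrm{Id}_2$, into
$$u \otimes u = 2\lambda\, \mathrm{Id}_2 - 2 J (\sym \nabla w) J^T.$$
A direct coordinate computation yields the identity $\mathrm{div}\bigl(J (\sym \nabla w) J^T\bigr) = -\tfrac12 \nabla^\perp(\mathrm{curl}\, w)$, so taking the distributional divergence of the display gives
$$\mathrm{div}(u \otimes u) = 2\nabla \lambda + \nabla^\perp(\mathrm{curl}\, w) = \nabla q + \nabla^\perp g,$$
which together with the automatic relation $\mathrm{div}\, u = 0$ is exactly \eqref{cpies}. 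The regularity $u \in \mathcal{C}^{0,\alpha}(\bar\Omega, \R^2)$ and $g \in \mathcal{C}^{0,\alpha}(\bar\Omega)$ is then inherited from $v, w \in \mathcal{C}^{1,\alpha}$.

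To ensure density and smallness, I would fix an enumeration $\{V^{(k)}\}_{k\in\mathbb N}$ of a countable subset of $\mathcal{C}^1(\bar\Omega)$ which is dense in $\mathcal{C}^0(\bar\Omega)$. For each pair $(k, m)$, apply Theorem \ref{w2oi-hld} with $v_0 = V^{(k)}$, $w_0 = 0$, and $A_0 = (q/2 + c_{k,m})\,\mathrm{Id}_2$ for $c_{k,m}$ large enough to guarantee \eqref{wA-inequ-holder}, and with prescribed approximation error $\eta = \min\{1/m, \epsilon\}$ (as afforded by the construction underlying Theorem \ref{weak2oi}). Re-indexing the resulting families $\{(v^{(k,m)}, w^{(k,m)})\}$ as $\{(v_n, w_n)\}_{n\in \mathbb N}$ and defining $u_n, g_n$ as above, the density of $\{v_n\}$ in $\mathcal{C}^0(\bar\Omega)$ follows by a triangle inequality combining the density of $\{V^{(k)}\}$ with $\|v^{(k,m)} - V^{(k)}\|_0 < 1/m$, while $\|w_n\|_0 < \epsilon$ holds directly by construction. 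The only minor subtlety is the distributional interpretation of \eqref{cpies} at the low regularity $\mathcal{C}^{1,\alpha}$, but this is harmless since the underlying matricial identity is pointwise in $\mathcal{C}^{0,\alpha}(\bar\Omega, \R^{2\times 2}_{sym})$ and the divergence acts on it term by term.
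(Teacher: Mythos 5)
Your proof is correct and takes essentially the same route as the paper's: both set $A_0$ to be a constant-shifted scalar multiple of $q\,\mathrm{Id}_2$, invoke Theorem \ref{w2oi-hld} with $w_0=0$, and then conjugate the resulting compatibility identity by $J$ (equivalently, take the cofactor of both sides) before taking a divergence. Your single identity $\mathrm{div}\bigl(J(\sym\nabla w)J^T\bigr) = -\tfrac12\nabla^\perp(\mathrm{curl}\,w)$ merely packages together the Piola identity $\mathrm{div}\,\mathrm{cof}\,\nabla w = 0$ and the complementary relation for $\mathrm{cof}\,(\nabla w)^T$ that the paper states as two separate facts.
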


\begin{proof}
As before, since $\mathcal{C}^1(\bar\Omega)$ is dense in
$\mathcal{C}^0(\bar\Omega)$, it is enough to take
$v_0\in\mathcal{C}^1(\bar\Omega)$ and approximate it by a sequence
$\{v_n\in\mathcal{C}^{1,\alpha}(\bar \Omega)\}_{n=1}^\infty$ with the
properties as in the statement of Corollary. Let $w_0 = 0$ and let $c>0$ be a
sufficiently large constant, so that $ (q+c)\mbox{Id}_2 - \nabla
v_0\otimes\nabla v_0$ is strictly positive definite in
$\bar\Omega$. By Theorem \ref{w2oi-hld}, there exists sequences
$v_n\in\mathcal{C}^{1,\alpha}(\bar\Omega)$ and
$w_n\in\mathcal{C}^{1,\alpha}(\bar\Omega, \R^2)$ which converge
uniformly to $v_0$ and $w_0$ and which satisfy:
$$(q+c)\mbox{Id}_2 = \nabla v_n \otimes\nabla v_n + 2\sym\nabla w_n
\quad \mbox{ in } \bar\Omega.$$
Taking the cofactor of both sides in the above matrix identity, we get:
$$(q+c)\mbox{Id}_2 = \nabla^\perp v_n \otimes\nabla^\perp v_n +
2\mbox{cof} \big(\sym\nabla w_n\big).$$
Taking the row-wise divergence, we obtain (\ref{cpies}) with
$u_n=\nabla ^\perp v_n$ and $g_n=\mbox{curl } w_n$, since
$\mbox{div }\mbox{cof }\nabla w_n = 0$, while
$\big(\mbox{div }\mbox{cof }(\nabla w_n)^T\big)^\perp = -\nabla
(\mbox{curl } w_n)$.
\end{proof}

\bigskip

Towards a proof of Theorem \ref{w2oi-hld} we will derive a sequence of
approximation results, and then combine them with Theorem
\ref{weak2oi} in section \ref{nareszcie}.
For completeness, we  first prove a simple, useful:

\begin{lemma}\label{interpol}
Let $\Omega\subset\mathbb{R}^2$ be an open and bounded domain. Given
are functions: $f\in \mathcal{C}^N(\bar \Omega, \R^n)$ and
$\psi\in\mathcal{C}^\infty(\R^n, \R^m)$. Then:
$$\forall k=0\ldots N\qquad \|\psi\circ f \|_k\leq M \|f\|_k,$$ 
where the constant $M>0$ depends on the dimensions $n$, $m$, the
differentiability order $N$, the domain $\Omega$, the norm $\|\psi\|_N$ on the compact set $f(\bar\Omega)$
and the norm $\|f\|_0$, but it does not depend on the higher norms of $f$.
\end{lemma}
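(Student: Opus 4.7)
The plan is to combine the higher-order chain rule (Faà di Bruno) with Gagliardo--Nirenberg interpolation inequalities on $\bar\Omega$. The case $k=0$ is immediate: since $f(\bar\Omega)\subset\mathbb{R}^n$ is compact and its size is controlled by $\|f\|_0$, the bound $\|\psi\circ f\|_0\leq \sup_{f(\bar\Omega)}|\psi|$ is absorbed into the announced constant $M$.

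For $1\leq k\leq N$ and any multi-index $\alpha$ with $|\alpha|=k$, Faà di Bruno's formula expresses $D^\alpha(\psi\circ f)$ as a finite linear combination, with combinatorial coefficients depending only on $n$, $m$ and $N$, of terms of the form
\begin{equation*}
D^p\psi(f(x))\cdot\prod_{i=1}^p D^{\beta_i}f(x), \qquad 1\leq p\leq k,\quad |\beta_i|\geq 1,\quad \sum_{i=1}^p |\beta_i|=k.
\end{equation*}
The prefactor $D^p\psi(f(x))$ is dominated pointwise on $\bar\Omega$ by $\sup_{f(\bar\Omega)}|D^p\psi|\leq \|\psi\|_N$, which is where the dependence on $\|\psi\|_N$ and (implicitly, through $f(\bar\Omega)$) on $\|f\|_0$ enters the constant $M$.

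It then suffices to bound each product $\prod_i\|D^{\beta_i}f\|_0$ linearly in $\|f\|_k$. For this I would invoke the standard Gagliardo--Nirenberg inequality on the bounded Lipschitz domain $\Omega$:
\begin{equation*}
\|D^j f\|_0 \leq C(k,\Omega)\,\|f\|_k^{j/k}\,\|f\|_0^{1-j/k}, \qquad 0\leq j\leq k.
\end{equation*}
Applying this with $j=|\beta_i|$ and multiplying over $i=1,\dots,p$, the exponents of $\|f\|_k$ sum to $\sum_i|\beta_i|/k=1$, while the exponents of $\|f\|_0$ sum to $p-1\leq N-1$. Thus every Faà di Bruno term is controlled by $C\,\|\psi\|_N\,\|f\|_k\,\|f\|_0^{p-1}$, and summing over the finitely many such terms and over $|\alpha|\leq k$ yields $\|\psi\circ f\|_k\leq M\|f\|_k$ as desired. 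The only genuinely delicate point is verifying that the interpolation exponents balance to give exactly linear (rather than superlinear) growth in $\|f\|_k$; this is automatic thanks to the identity $\sum_i|\beta_i|=k$ built into Faà di Bruno's formula.
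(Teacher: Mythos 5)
Your proof is correct and follows essentially the same route as the paper's: Faà di Bruno's formula to expand $\nabla^k(\psi\circ f)$, pointwise control of $D^p\psi\circ f$ via $\|\psi\|_N$ on the compact image $f(\bar\Omega)$, and the interpolation inequality $\|f\|_j\le C\|f\|_0^{1-j/k}\|f\|_k^{j/k}$ applied factor by factor, with the balance $\sum_i|\beta_i|=k$ ensuring the exponents of $\|f\|_k$ sum to exactly one. The only cosmetic difference is the indexing of the Faà di Bruno terms (you use partitions into multi-indices $\beta_i$ with $\sum|\beta_i|=k$, the paper uses the multiplicity vector $m$ with $\sum i m_i=k$), which are equivalent parametrizations.
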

\begin{proof}
The statement is obvious for $k=0$. 
Fix $k\in\{1\ldots N\}$ and let $m=(m_1, \cdots, m_k)$ be any $k$-tuple
of nonnegative integers such that $ \sum_{i=1}^k i m_i =k$. Denoting
$|m| = \sum_{i=1}^k m_i $ and using the interpolation inequality \cite{Adams}:
$$ \forall i=1\ldots k \qquad \|f\|_i \le M_0\|f\|^{1-i/k}_{0}
\|f\|^{i/k}_{k},$$
valid with a constant $M_0>0$ depending on $n, N$ and $\Omega$, we get:
$$ \prod_{i=1}^k \|\nabla^i f\|^{m_i}_0 \leq M_0^{|m|} \prod_{i=1}^k \|f\|_0^{m_i-
  im_i/k}\|f\|_k^{im_i/k} =  M_0^{|m|}\|f\|^{|m|-1} _0 \|f\|_k.  $$
with $|m|:= m_1+\cdots + m_j$. 
Calculating the partial derivatives in $\nabla^k (\psi\circ f)$ by the
Fa\`a di Bruno formula, gives hence the desired estimate:
$$ \|\nabla^k (\psi\circ f)\|_0 \leq M \sum_{m} \prod_{i=1}^k
\|\nabla^i f\|^{m_i}_0 \leq M \|f\|_k. $$  
Above, the summation extends over all multiindices $m=(m_1, \cdots, m_k)$ with the
properties listed at the beginning of the proof.
\end{proof}

\medskip

We recall the following estimates which have been proved in \cite{CDS}:

\begin{lemma}\label{stima}
Let $\varphi\in\mathcal{C}_c^\infty(B(0,1),\mathbb{R})$ be a standard
mollifier supported on the ball $B(0,1)\subset\mathbb{R}^n$, that is a
nonnegative, smooth and radially symmetric function such that
$\int_{\mathbb{R}^n}\varphi = 1$. Denote: 
$$\forall l\in (0,1) \qquad \varphi_l (x) =
\frac{1}{l^n}\varphi(\frac{x}{l}).$$
Then, for every $f,g\in\mathcal{C}^0(\mathbb{R}^n)$ there holds:
\begin{eqnarray}
\label{a} \forall k,j\geq 0\quad & & \|f\ast\varphi_l\|_{k+j} \leq \frac{C}{l^k}\|f\|_j\\
\label{b}   \forall k\geq 0 \quad & & \|f\ast\varphi_l -
  f\|_k\leq \frac{C}{l^{k-2}}\|f\|_2\\
\label{c}   \forall \alpha\in (0,1] \quad & & \|f\ast\varphi_l -
  f\|_0\leq {C}{l^\alpha}\|f\|_{0,\alpha}\\
\label{ba}   \forall \alpha\in (0,1] \quad & & \|f\ast\varphi_l\|_1\leq \frac{C}{l^{1-\alpha}}\|f\|_{0,\alpha}\\
\label{d}   \forall k\geq 0 \quad \forall \alpha\in (0,1]\quad & & \|(fg)\ast\varphi_l -
  (f\ast\varphi_l) (g\ast\varphi_l)\|_k\leq \frac{C}{l^{k-2\alpha}}\|f\|_{0,\alpha} \|g\|_{0,\alpha},
\end{eqnarray}
with the uniform constants $C>0$ depending only on the smoothness
exponents $k$, $j$, $\alpha$.
\end{lemma}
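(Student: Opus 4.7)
\textbf{Proof proposal for Lemma \ref{stima}.} The plan is to treat the five estimates in order, exploiting that all needed tools are elementary: Young's convolution inequality, the fact that $\int \nabla^k\varphi_l=0$ whenever $k\geq 1$ (since $\nabla^k\varphi_l$ is a derivative of a compactly supported function), the rescaling identity $\nabla^k\varphi_l(y)=l^{-(n+k)}(\nabla^k\varphi)(y/l)$, and the H\"older bound $|f(x-y)-f(x)|\le |y|^\alpha\|f\|_{0,\alpha}$.

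For (a), distribute derivatives: $\nabla^{k+j}(f*\varphi_l)=(\nabla^j f)*(\nabla^k\varphi_l)$, and then apply Young with $\|\nabla^k\varphi_l\|_{L^1}=l^{-k}\|\nabla^k\varphi\|_{L^1}\le C l^{-k}$. For (c), write directly
\[
(f*\varphi_l)(x)-f(x)=\int\varphi(y)\bigl(f(x-ly)-f(x)\bigr)\,dy,
\]
and bound the integrand pointwise by $|ly|^\alpha\|f\|_{0,\alpha}$. For (b), the case $k=0$ uses Taylor expansion together with the symmetry $\int\varphi(y)y\,dy=0$, which kills the first-order term and leaves a $O(l^2\|f\|_2)$ remainder; the case $k=1$ follows by applying the $k=0$ argument (with the simpler Lipschitz bound $|f(x-ly)-f(x)|\le l|y|\|f\|_1$) to $\nabla f$; the case $k\ge 2$ is obtained by combining (a) with the trivial bound $\|f\|_k\le \|f\|_2$ in the appropriate range, or directly from $\|f*\varphi_l\|_k\le Cl^{2-k}\|f\|_2$. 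For (ba), use that $\int\nabla\varphi_l=0$ so that
\[
\nabla(f*\varphi_l)(x)=\int\bigl(f(x-y)-f(x)\bigr)\nabla\varphi_l(y)\,dy,
\]
and estimate $\int|y|^\alpha|\nabla\varphi_l(y)|\,dy=l^{\alpha-1}\int|z|^\alpha|\nabla\varphi(z)|\,dz$ by change of variables.

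The main work is (d), which is the Constantin--E--Titi commutator estimate. The plan is to isolate the algebraic identity
\[
(fg)*\varphi_l-(f*\varphi_l)(g*\varphi_l)=\int\varphi_l(y)\bigl(f(\cdot-y)-f\bigr)\bigl(g(\cdot-y)-g\bigr)dy-\bigl(f-f*\varphi_l\bigr)\bigl(g-g*\varphi_l\bigr),
\]
which is verified by expanding both sides. For $k=0$, each term is bounded by $Cl^{2\alpha}\|f\|_{0,\alpha}\|g\|_{0,\alpha}$: the integral directly, and the product via (c) applied to $f$ and $g$ separately. For general $k$, apply $\nabla^k$ to the identity, use Leibniz on the product term $(f-f*\varphi_l)(g-g*\varphi_l)$ (noting that derivatives falling on $f*\varphi_l$ are controlled by the refined bound $\|f*\varphi_l\|_j\le Cl^{\alpha-j}\|f\|_{0,\alpha}$ for $j\ge 1$, which is (ba) plus (a)), and for the integral term move $\nabla^k$ into the $y$-integral and bound by change of variables as in (ba); every derivative costs at most a factor $1/l$, yielding the claimed power $l^{2\alpha-k}$. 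Bookkeeping the various terms arising from Leibniz is the only delicate step, but each summand produces the same scaling $l^{2\alpha-k}\|f\|_{0,\alpha}\|g\|_{0,\alpha}$, so the sum absorbs into the universal constant $C$.
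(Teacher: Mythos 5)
Your arguments for (a), (c) and (ba) are correct and essentially identical to the paper's. For (b) the paper's route is a single Taylor computation against $\nabla^k\varphi_l$ with the linear term killed by parity; your case-by-case version ($k=0$ via symmetry, $k=1$ via Lipschitz on $\nabla f$, $k=2$ via (a)) covers the same ground, although the remark ``the trivial bound $\|f\|_k\le\|f\|_2$'' is backwards and the triangle-inequality route does not control $\|f*\varphi_l-f\|_k$ for $k>2$ (neither does the paper's computation, strictly speaking; in practice the lemma is only invoked with $k\le 2$).

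The real issue is (d). The paper does not prove it at all: it simply cites \cite[Lemma 1]{CDS}. You attempt a proof, and your pointwise Constantin--E--Titi identity is correct and settles the case $k=0$. But the step ``apply $\nabla^k$ to the identity and use Leibniz'' has a genuine gap for $k\ge 1$. The two terms on the right of your identity, $\int\varphi_l(y)\big(f(\cdot-y)-f\big)\big(g(\cdot-y)-g\big)\,dy$ and $(f-f*\varphi_l)(g-g*\varphi_l)$, are individually only as regular as the un-mollified $f$, $g$ (which appear as standalone factors), even though their sum is smooth; so one cannot differentiate the pieces separately. Concretely, Leibniz on $(f-f_l)(g-g_l)$ produces $\nabla^j(f-f_l)=\nabla^jf-\nabla^jf_l$ for $j\ge1$, and $\nabla^jf$ is not controlled by $\|f\|_{0,\alpha}$; the ``refined bound'' you quote controls only $\nabla^jf_l$, which is half the term. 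The same obstruction prevents pushing $\nabla^k$ onto $\varphi_l$ inside the integral, since the integrand is not a function of $x-y$ alone. The standard resolution (used in \cite{CDS} and \cite{CoTi}) is the \emph{symmetrized double-integral} identity
\begin{equation*}
(fg)*\varphi_l - (f*\varphi_l)(g*\varphi_l) = \frac12\iint \varphi_l(y)\varphi_l(z)\big(f(\cdot-y)-f(\cdot-z)\big)\big(g(\cdot-y)-g(\cdot-z)\big)\,\mathrm{d}y\,\mathrm{d}z,
\end{equation*}
whose right-hand side depends on $x$ only through $x-y$ and $x-z$. Applying $\nabla_x^k$ then places every derivative on the two mollifier kernels, giving $\sum_{j}l^{-j}\cdot l^{-(k-j)}\cdot(2l)^{2\alpha}\|f\|_{0,\alpha}\|g\|_{0,\alpha}=Cl^{2\alpha-k}\|f\|_{0,\alpha}\|g\|_{0,\alpha}$, as required. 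Your single-integral identity is the right tool at $k=0$ but does not extend to $k\ge1$ without this symmetrization.
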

\begin{proof}
The estimate (\ref{a}) follows directly from the definition of
convolution. To prove (\ref{b}), note that for every $x\in\mathbb{R}^n$:
\begin{equation*}
\begin{split}
|\nabla^k (f\ast&\varphi_l - f)(x)|  =
\Big|\int_{\mathbb{R}^n} \varphi_l(y) \big(\nabla^k f(x-y) - \nabla^k
f(x)\big)\mbox{d}y\Big| \\ & = \Big|\int_{\mathbb{R}^n} \nabla^k\varphi_l(y) \big(f(x-y) - f(x)\big)\mbox{d}y\Big|
= \frac{1}{l^k} \Big|\int_{\mathbb{R}^n}
\frac{1}{l^n}\nabla^k\varphi(\frac{y}{l}) \big(\nabla f(x)\cdot y +
r_x(y) \big)\mbox{d}y\Big| \\ & 
= \frac{1}{l^k} \Big|\int_{\mathbb{R}^n} \frac{1}{l^n}\nabla^k\varphi(\frac{y}{l}) r_x(y) \mbox{d}y\Big|
\leq \frac{C}{l^k}\sup_{x\in\mathbb{R}^n;~ |y|<l} |r_x(y)|\leq \frac{C}{l^{k-2}} \|f\|_2,
\end{split}
\end{equation*}
where we integrated by parts, discarded the contribution with the
symmetric term $\nabla f(x)\cdot y$ which integrates to $0$, and
estimated the Taylor's formula remainder term: 
$$r_x(y) = f(x-y) - f(x) - \nabla f(x)\cdot y = \|f\|_2 \mathcal{O}(|y|^2).$$
The proof of (\ref{c}) follows similarly by:
\begin{equation*}
\begin{split}
|\nabla^k (f\ast\varphi_l - f)(x)|  & =
\Big|\int_{\mathbb{R}^n} \varphi_l(y) |y|^\alpha \frac{f(x-y) -
  f(x)}{|y|^\alpha}~\mbox{d}y\Big| \\ & \leq C l^\alpha \|f\|_{0,\alpha}
\int_{\mathbb{R}^n} \varphi_l(y) \mbox{d}y\leq C l^\alpha \|f\|_{0,\alpha},
\end{split}
\end{equation*}
while for (\ref{ba}) we write:
\begin{equation*}
\begin{split}
|\nabla (f\ast\varphi_l)(x)| & = \Big|\int_{\mathbb{R}^n}
f(x-y)  \frac{1}{l^{n+1}} \nabla\varphi_l(\frac{y}{l}) ~\mbox{d}y\Big| 
= \frac{1}{l}\Big|\int_{\mathbb{R}^n}
\frac{f(x-y) - f(x)}{|y|^\alpha}  \frac{|y|^\alpha}{l} \frac{1}{l^n} \nabla\varphi_l(\frac{y}{l}) ~\mbox{d}y\Big| 
\\ & \leq C l^{\alpha-1} \|f\|_{0,\alpha}
\int_{\mathbb{R}^n} \frac{1}{l^n} \big|\nabla\varphi_l(\frac{y}{l})\big| ~\mbox{d}y\leq
\frac{C}{l^{1-\alpha}} \|f\|_{0,\alpha}.
\end{split}
\end{equation*}
Finally, for the crucial commutator estimate (\ref{d}) we refer to
\cite[Lemma 1]{CDS}.
\end{proof}

\bigskip

\noindent {\bf A heuristic overview of the next two sections.} 

\noindent Let us attempt to follow the construction in sections
\ref{C1} and \ref{C22}, but with the goal of controlling 
the higher H\"older norms of the iterations, and hence also quantifying the growth of the $\mathcal{C}^2$
norms of  $v, w$. Let  $A\in \mathcal{C}^\infty (\bar \Omega, \R^{2\times 2}_{sym})$  be the
target matrix field and let $v_1\in \mathcal{C}^\infty(\bar \Omega)$,  
$w_1\in \mathcal{C}^\infty (\bar \Omega, \R^2)$ be given at an input
of a \lq stage'. As in Proposition \ref{stage}, we decompose the defect $\mathcal{D}= A-
(\frac 12 \nabla v_1 \otimes \nabla v_1 + \sym \nabla w_1)$ into a
linear combination  $\sum_{k=1}^N a^2_k\eta_k\otimes\eta_k$ of rank-one symmetric matrices
with smooth coefficients given by Lemma \ref{decompose}.  We define:
\begin{equation*}\label{stp-anz-hld0} 
\begin{split}
v_{k+1}   (x) & = v_k(x) + \frac{1}{\lambda} \Gamma_1(x, \lambda x\cdot \eta_k), \\
w_{k+1}   (x) & =  w_k(x) - \frac{1}{\lambda} 
\Gamma_1 (x, \lambda x \cdot \eta_k) \nabla v_k(x) + \frac {1}\lambda
\Gamma_2(x, \lambda x\cdot \eta_k) \eta_k. 
\end{split}
\end{equation*} 
This yields, by applying Lemma \ref{interpol}  to $\psi(x)=x^2$ and $f=a_k$:
\begin{equation*}
\begin{split} 
& \forall m: 0\ldots 3 \qquad \|\nabla^m v_{k+1} - \nabla^m v_k\|_0 \le
C \sum_{i+j=m; ~0\leq i,j\leq m} \|a_k\|_i \lambda ^{j-1},\\
& \forall m: 0\ldots 2 \qquad \|\nabla^m w_{k+1} - \nabla^m w_k\|_0
\le C \sum_{i+j=m;~ 0\leq i,j\leq m}  \|a_k\|_i \lambda ^{j-1}   \\ &
\qquad\qquad\qquad\qquad\qquad \qquad\qquad\qquad \qquad + C
\sum_{i+j+ s=m;~0\leq i,j,s\leq m}  \|a_k\|_i \lambda^{j-1}  \|\nabla^{s+1} v_k\|_{0},
\end{split}
\end{equation*}
On the other hand, applying Lemma \ref{interpol} to $\psi =
\phi_k$ defined in Lemma \ref{decompose} and to $f= \mathcal{D}$, we get: 
$$ \forall k:1\ldots N \qquad   \|a_k\|_2 \le C (\|v_1\|^2_{3} + \|w_1\|_{3} + \|A\|_2).  $$ 

Now, in order to control the $\mathcal{C}^{1,\alpha}$ norm of $v_{N+1}$  
through interpolation, we need  to control the norm $\|v_{N+1}\|_2$, which in
turn depends on $\|a_k\|_2$.  The above estimate shows that at the end of each stage,
the $\mathcal{C}^2$ norm of  $a_k$ is determined by the $\mathcal{C}^3$ norms of the given
$v_1$ and $w_1$ of the previous stage.  Further, the   
$\mathcal{C}^2$ norm of $w_{N+1}$ is only controlled by the $\mathcal{C}^{3}$ norm of $v_0$
and also of all the $a_k$'s. One might hope to control $\|a_k\|_3$  
if the deficit $\mathcal{D}$ is small enough, but the dependence of
$\|w_{N+1}\|_2$ on $\|v_0\|_3$ cannot be easily bypassed.  
Recalling that we need infinitely many stages in the construction,
this implies that a direct estimate cannot be obtained in this manner,
unless we deal with analytic data similar as in \cite{bori2}.
We thus need to modify the previous simplistic approach. 

%As in \cite{CDS}, 
The appropriate modification is achieved by introducing a mollification before each stage.
Indeed, we note that the loss of derivatives in the
above estimates is accompanied by a similar gain in the powers of
$\lambda$, in a manner that the total order of derivatives, plus
the order of powers needed to control $\|v_{N+1}\|_2$ and
$\|w_{N+1}\|_2$  is constant. If we replace $v_1$ and $w_1$ by their mollifications
on the scale $l \sim \lambda^{-1}$, each derivative loss can be
estimated  by  one power of $\lambda$, and  
$\|v_0\|_2$ and $\|w_0\|_2$ will control $\|v_{N+1}\|_2$ and
$\|w_{N+1}\|_2$. One problem still remains to be taken care of: does
the deficit $\mathcal{D}$ decrease at the end of each stage? As
the calculation below will show, a mollification of order
$\lambda^{-1}$ does not suffice to this end,
and we need to mollify at a larger  scale of $l> \lambda^{-1}$. 

This is indeed how we want proceed. 
In practice, we let the mollification scale to be $l= \delta/M$ and we treat
$\nabla v$ ``like $a$'', controlling its $j$-th norm by $\delta l^{-j}$. 
We then ``sacrifice'' one $l$ in order to gain one $\delta$; instead of $\|\nabla (v\ast
\varphi_l)\|_j \le C \|v\|_1 l^{-j}$, we use $\|\nabla (v\ast
\varphi_l)\|_j \le C (\|v\|_2 l) l^{-j}$, choosing $l$ such that
$l\|v\|_2 < \delta$ and obtaining the desired bound (\ref{stephld2}). 

Finally, note that the loss of $N$ powers of
$\lambda l>1$ in the control of the $\mathcal{C}^2$  
norms at the end of each stage, is the main reason why the described
scheme does not deliver better than $\mathcal{C}^{1,1/7}$ 
estimates, even for the optimal $N=3$ from the decomposition
in Lemma \ref{decomposeId}. 

%\textcolor{red}{ We might want later at the stage level to look at $D^2 v_k$ here and recover a term 
% with $\partial_{tt} \Gamma_1 \eta_k \otimes \eta_k$ to improve on
%Holder regularity. This could be helpful or not?}  

\section{The $\mathcal{C}^{1,\alpha}$ approximations - a \lq step' and
  a \lq stage'  in a proof of Theorem \ref{w2oi-hld}.}
  
In this section, we develop the approximation technique that will be
used for a proof of Theorem \ref{w2oi-hld} in the next section. 
The first result is a variant of Proposition \ref{step} in which we
accomplish the \lq step' of the Nash-Kuiper construction
with extra estimates on the higher derivatives.
 
\begin{proposition}\label{step-hld}
Let $\Omega\subset \R^2$ be an open, bounded set. Given are functions:
$v \in \mathcal{C}^{3}(\bar\Omega)$, $w\in\mathcal{C}^{2} (\bar\Omega,
\R^2)$, a nonnegative function $a \in \mathcal{C}^{3}(\bar \Omega)$
and a unit vector $\eta\in \R^2$. Let $\delta, l \in (0,1)$ be two
parameter constants such that:
\bee\label{step-a0}
\|a\|_m \le \frac{\delta}{l^m} \quad \forall m=0 \ldots 3, 
\quad \mbox{ and } \quad
\| \nabla v\|_{m}   \le \frac{\delta}{l^m} \quad  \forall m=1,2.
\eee
Then for every $\lambda > 1/l$ there exist approximating functions $\tilde
  v_\lambda\in \mathcal{C}^{3}(\bar \Omega)$ and $\tilde w_\lambda\in
  \mathcal{C}^{2}(\bar \Omega, \R^2)$ satisfying the following bounds,
  with a universal constant $C>0$ independent of all parameters:
\bee \label{stephld2}
\Big\|\big(\frac 12 \nabla \tilde v_\lambda \otimes \nabla
\tilde v_\lambda + \sym \nabla \tilde w_\lambda\big) - \big(\frac 12 \nabla v
\otimes \nabla v + \sym \nabla w + a^2
\eta \otimes \eta\big)\Big\|_0 \leq C \frac{\delta^2}{\lambda l}, 
\eee 
\bee\label{stephld1v}
\|\tilde v_\lambda - v\|_m  \leq C\delta  \lambda^{m-1}  \qquad
\forall m=0 \ldots 3,
\eee
\bee\label{stephld1w}
\|\tilde w_\lambda - w\|_m \leq C \delta  \lambda^{m-1} \big(1+
\|\nabla v\|_0\big) \qquad  \forall m=0\ldots 2.
\eee
\end{proposition}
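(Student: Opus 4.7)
The plan is to reuse the ansatz of Proposition \ref{step}, namely $\tilde v_\lambda(x) = v(x) + \frac{1}{\lambda}\Gamma_1(x, \lambda x \cdot \eta)$ and $\tilde w_\lambda(x) = w(x) - \frac{1}{\lambda}\Gamma_1(x, \lambda x \cdot \eta)\nabla v(x) + \frac{1}{\lambda}\Gamma_2(x, \lambda x \cdot \eta)\eta$, where $\Gamma_1, \Gamma_2$ are the one-dimensional profiles from Lemma \ref{convex}. The task then reduces to upgrading the elementary $\mathcal{C}^0$ control of Proposition \ref{step} in two ways: (i) exploiting the scaling hypothesis (\ref{step-a0}) to convert the residual bound (\ref{stepresult1}) into (\ref{stephld2}), and (ii) carefully tracking higher derivatives of $\Gamma_i(x,\lambda x \cdot \eta)$. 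Throughout, the key algebraic fact is $\lambda l > 1$, which lets us trade a factor of $l^{-1}$ for a factor of $\lambda$ without loss.

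For the defect estimate (\ref{stephld2}) no new cancellation is required: the boxed and double-boxed terms in the proof of Proposition \ref{step} combine exactly as before, producing the residual (\ref{stepresult1}) of the form $\frac{C}{\lambda}\|a\|_0\bigl(\|\nabla a\|_0 + \|\nabla^2 v\|_0\bigr) + \frac{C}{\lambda^2}\|\nabla a\|_0^2$. Substituting $\|a\|_0 \le \delta$, $\|\nabla a\|_0 \le \delta/l$, and $\|\nabla^2 v\|_0 \le \delta/l$ from (\ref{step-a0}) produces $\frac{C\delta^2}{\lambda l} + \frac{C\delta^2}{\lambda^2 l^2}$, and the second term is absorbed into the first because $\lambda l > 1$.

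For the higher-norm estimates (\ref{stephld1v})--(\ref{stephld1w}), I would expand $\nabla^m(\tilde v_\lambda - v)$ and $\nabla^m(\tilde w_\lambda - w)$ via the multivariate chain rule applied to $\Gamma_i(x,\lambda x \cdot \eta)$, viewed as a composition in $x$ and $t = \lambda x \cdot \eta$, together with the Leibniz rule for the products with $\nabla v$ appearing in $\tilde w_\lambda$. Using the explicit formulas $\Gamma_1(x,t) = \frac{a(x)}{\pi}\sin(2\pi t)$ and $\Gamma_2(x,t) = -\frac{a(x)^2}{4\pi}\sin(4\pi t)$, a term in $\nabla^m(\tilde v_\lambda - v)$ arising from $j$ differentiations that pass through the $t$-slot and $m-j$ acting directly on $a$ picks up a prefactor $\lambda^{j-1}$ and is bounded by $\lambda^{j-1}\|a\|_{m-j} \le \delta \lambda^{j-1} l^{-(m-j)} \le C\delta \lambda^{m-1}$, upon using $\lambda l > 1$ to convert each $l^{-1}$ into a $\lambda$; summing over the finitely many combinations yields (\ref{stephld1v}). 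The argument for (\ref{stephld1w}) is analogous: the Leibniz expansion of $\frac{1}{\lambda}\Gamma_1 \nabla v$ produces terms with $\nabla^{m-i}(\nabla v)$ controlled either by $\|\nabla v\|_0$ (when $m-i=0$, the source of the $\|\nabla v\|_0$ factor in (\ref{stephld1w})) or by $\delta/l^{m-i}$ via (\ref{step-a0}); the contribution of $\frac{1}{\lambda}\Gamma_2\,\eta$ uses the bound $\|a^2\|_j \le C\delta^2/l^j$ and fits inside the same envelope without generating any $\|\nabla v\|_0$ factor.

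The main obstacle is not conceptual but organizational: one must enumerate all the terms produced by the Leibniz and chain rules and verify, case by case, that the conversion $l^{-1} \leftrightarrow \lambda$ closes each estimate at the announced order $\delta \lambda^{m-1}$. The restriction $\lambda > 1/l$ in the hypothesis is precisely what makes this trade-off lossless, and the appearance of $(1 + \|\nabla v\|_0)$ in (\ref{stephld1w}) signals that the $\mathcal{C}^0$ norm of $\nabla v$ is the one quantity we deliberately refuse to scale by $\delta$.
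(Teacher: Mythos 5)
Your proposal is correct and follows essentially the same route as the paper: the same one-dimensional oscillation ansatz, the same substitution of the scaling hypothesis (\ref{step-a0}) into the residual bound (\ref{stepresult1}), and the same Leibniz/chain-rule bookkeeping with the trade $l^{-1}\le\lambda$ to close each estimate at order $\delta\lambda^{m-1}$. The only cosmetic difference is that you bound $\|a^2\|_j\le C\delta^2/l^j$ by Leibniz directly, while the paper invokes Lemma \ref{interpol} with $\psi(x)=x^2$ to get $\|a^2\|_j\le C\|a\|_j\le C\delta/l^j$; both suffice.
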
  
\begin{proof} 
We define $\tilde v_\lambda$, $\tilde w_\lambda$ as in the proof of
Proposition \ref{step}:
\begin{equation*}
\begin{split}
\tilde v_\lambda(x) & = v(x) + \frac{1}{\lambda}  \Gamma_1(x, \lambda x\cdot \eta), \\
\tilde w_\lambda(x) & =  w(x) - \frac{1}{\lambda} \Gamma_1(x,
\lambda x \cdot \eta) \nabla v(x) + \frac {1}\lambda \Gamma_2(x, \lambda
x\cdot \eta) \eta. 
\end{split}
\end{equation*}
Firstly, (\ref{stephld2}) follows immediately from (\ref{stepresult1})
in view of (\ref{step-a0}), because $\lambda l > 1$:
\begin{equation*}
\frac{1}{\lambda} \|a\|_0 \big(\|\nabla a\|_0 +  \|\nabla^2 v\|_0\big)
+ \frac{1}{\lambda^2} \|\nabla a\|^2_{0} 
\leq  2\frac{\delta}{\lambda} \frac{\delta}{l} + \frac{1}{\lambda^2}
\frac{\delta^2}{l^2} \leq 3 \frac{\delta^2}{\lambda l}.
\end{equation*}
To check (\ref{stephld1v}), we compute directly as in Lemma \ref{convex}:
\begin{equation*}
\nabla^m(\tilde v_\lambda - v)\|_0  \leq  \frac{C}{\lambda}
\|\nabla^m\Gamma_1(x, \lambda x\cdot \eta)\|_0 \leq  \frac{C}{\lambda}
\sum_{i+j=m;~ 0\leq i,j\leq m}\|a\|_j\lambda^j \leq 
\frac{C}{\lambda} \sum_{i=0}^m\frac{\delta}{l^i}\lambda^{m-i} \leq
C\delta \lambda^{m-1}
\end{equation*}
by (\ref{step-a0}) and noting again $\lambda l > 1$.
Similarly:
\bees
\begin{split} 
\|\nabla^m(\tilde w_\lambda - w)\|_0  & \leq   \frac{C}{\lambda} \Big(
\|\nabla^m\Gamma_2(x, \lambda x\cdot \eta)\|_0 +
\|\nabla^m\Gamma_1(x, \lambda x\cdot \eta)\nabla v\|_0 \Big) \\ &
\leq \frac{C}{\lambda} \Big( \sum_{i+j=m, ~ 0\le i,j \le m}  \|a^2\|_i
\lambda ^{j}   + \sum_{i+j+ s=m, ~ 0\le i,j,s\le m}  \|a\|_i
\lambda^{j}  \|\nabla v\|_{s} \Big) \\ & 
\leq \frac{C}{\lambda} \Big( \sum_{i=1}^m\frac{\delta}{l^i}\lambda^{m-i} +
\sum_{0\leq i+s\leq m, ~ 0\le i,s \le m}\frac{\delta}{ l^i}
\lambda^{m-(i+s)}\frac{\delta}{l^s} +  \sum_{i+j=m, ~ 0\le i,j\le m}
\frac{\delta}{l^i} \lambda^{j} \|\nabla v\|_0\Big) \\ &
 \leq \frac{C}{\lambda} \Big(\sum_{i=1}^m \frac{\delta}{l^i}
 \lambda^{m-i} \Big) \big( 1+ 1 + \|\nabla v\|_0\big) 
\leq C \delta \lambda^{m-1} (1 + \|\nabla v\|_0), 
\end{split}   
\eees 
where we applied Lemma \ref{interpol}  to $\psi(x)=x^2$ and $f=a$  
in view of (\ref{step-a0}) yielding $\|a\|_0\leq 1$, so that: 
$\|a^2\|_i \le C \|a\|_i\leq C\delta/l^i$.
This achieves (\ref{stephld1w}) and completes the proof of Proposition.
\end{proof} 

\medskip

We now accomplish the \lq stage' in the H\"older regular approximation construction.

\begin{proposition}\label{stg-hld}
Let $\Omega\subset\mathbb{R}^2$ be an open, bounded domain. Let 
$v\in \mathcal{C}^2(\bar \Omega)$, $w\in \mathcal{C}^2(\bar \Omega, \R^2)$ 
and $A\in \mathcal{C}^{0,\beta}(\bar \Omega, \R^{2\times 2}_{sym})$
for some $\beta\in (0,1)$, be such that the deficit $\mathcal{D}$ is appropriately small:
\begin{equation}\label{assumpkot}
\mathcal{D} = A - \big(\frac{1}{2}\nabla v\otimes\nabla v +
\sym\nabla w\big), \qquad 0 < \|\mathcal{D}\|_0 < \delta_0\ll 1.
\end{equation}
Then, for every two parameter constants $M, \sigma$ satisfying:
\begin{equation}\label{assumpkot2}
M > \max\{\|v\|_2, \|w\|_2, 1\} \quad \mbox{ and } \quad \sigma>1,
\end{equation}
there exists  $ \tilde v \in \mathcal{C}^2(\bar
\Omega)$ and $\tilde w  \in \mathcal{C}^2(\bar \Omega, \R^2)$ such
that  the following error bounds hold for $\tilde v$, $\tilde w$ and
the new deficit $\tilde{\mathcal{D}} = A -
  \big(\frac{1}{2}\nabla \tilde v\otimes\nabla \tilde v + \sym\nabla \tilde w\big)$:
\bee\label{stg-hld1}
\|\tilde{\mathcal{D}}\|_0 \le C \Big(\frac{\|A\|_{0,\beta}}{M^\beta}\|\mathcal{D}\|_0^{\beta/2} +
\frac{1}{\sigma} \|\mathcal{D}\|_0 \Big),    
\eee
\bee\label{stg-hld2v}
\|\tilde v - v\|_1  \leq C \|\mathcal{D}\|_0^{1/2} \quad \mbox{ and }
\quad  \|\tilde w - w\|_1 \leq C (1+ \|\nabla v\|_0) \|\mathcal{D}\|_0^{1/2} , 
\eee
\bee\label{stg-hld3}
\|\tilde v \|_2   \leq C   M \sigma^3  \quad \mbox{ and }
\quad   \|\tilde w\|_2 \le  C (1+ \|\nabla v\|_0) M \sigma^3.
\eee
The constant $C>0$ is universal and independent of all parameters.
\end{proposition}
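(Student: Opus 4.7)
My plan is to adapt the $\mathcal{C}^1$ iteration of Proposition \ref{stage} by prepending a mollification step that recovers the higher-derivative control required as input to Proposition \ref{step-hld}, and then applying that \lq step' proposition $N_0$ times with a carefully tuned sequence of frequencies. Three auxiliary scales enter: a mollification scale $l$, an amplitude scale $\delta \sim \|\mathcal{D}\|_0^{1/2}$, and a sequence of oscillation frequencies $\{\lambda_k\}$ tied to $\sigma$.

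For the mollification, I would fix $l \sim \|\mathcal{D}\|_0^{1/2}/M$ with a small universal constant, and replace $v,w$ by $\hat v = v\ast\varphi_l$ and $\hat w = w\ast\varphi_l$. By Lemma \ref{stima}, $\|\hat v\|_2,\|\hat w\|_2 \leq M$ and $\|\hat v\|_{m+2}, \|\hat w\|_{m+2} \leq CM/l^m$, which after setting $\delta = C\|\mathcal{D}\|_0^{1/2}$ verifies the hypothesis (\ref{step-a0}) of Proposition \ref{step-hld}. The cost of mollifying manifests through two errors: the H\"older mollification error $\|A - A\ast\varphi_l\|_0 \leq Cl^\beta\|A\|_{0,\beta}$ from (\ref{c}), and the commutator-type error $\|(\nabla v\otimes\nabla v)\ast\varphi_l - \nabla\hat v\otimes\nabla\hat v\|_0 \leq C l^2 M^2$ from (\ref{d}) applied at Lipschitz exponent $\alpha=1$. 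With our choice of $l$, the former evaluates to $\|A\|_{0,\beta}\|\mathcal{D}\|_0^{\beta/2}/M^\beta$, matching the first term of (\ref{stg-hld1}); the latter is of order $\|\mathcal{D}\|_0$ and can be absorbed into the second error term by a further tightening of $l$ when $\sigma$ is large.

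Next, after a small positive shift by $\rho\,\mathrm{Id}_2$ to enforce positive-definiteness of the matrix to be decomposed (with $\rho \lesssim \|\mathcal{D}\|_0/\sigma$), I would decompose the adjusted mollified deficit via Lemma \ref{decompose} as $\sum_{k=1}^{N_0} a_k^2\eta_k\otimes\eta_k$. Lemma \ref{interpol} applied to the smooth profile functions $\phi_k$ produces $\|a_k\|_m \lesssim \delta/l^m$ for $m \leq 3$. Setting $v_1 = \hat v$, $w_1 = \hat w$, I would inductively apply Proposition \ref{step-hld} with data $(v_k,w_k,a_k,\eta_k)$ and frequencies $\lambda_k$ to produce $\tilde v = v_{N_0+1}$ and $\tilde w = w_{N_0+1}$. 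Telescoping the step errors from (\ref{stephld2}) and adding the mollification errors yields the deficit bound (\ref{stg-hld1}); summing first-derivative increments from (\ref{stephld1v})--(\ref{stephld1w}) at $m=1$ gives (\ref{stg-hld2v}); summing the corresponding second-derivative increments at $m=2$ gives (\ref{stg-hld3}).

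The delicate part is the coupled bookkeeping for the choices of $l$ and the $\lambda_k$'s that simultaneously (i) keeps the hypotheses of each step valid as $\|v_k\|_2$ inflates through successive applications, (ii) produces the advertised $\frac{1}{\sigma}\|\mathcal{D}\|_0$ decay in the cumulative step error, and (iii) accumulates to the $M\sigma^3$ bound of (\ref{stg-hld3}). Each step increments $\|v\|_2$ by roughly $\delta\lambda_k \sim \sigma\|v_k\|_2$, and this multiplicative growth propagated over the three directions of the optimal pointwise decomposition of Lemma \ref{decomposeId} is what delivers the stated power of $\sigma$. The absence of any positivity assumption on $\mathcal{D}$ in (\ref{assumpkot}) is the remaining structural subtlety, handled by the identity shift $\rho\,\mathrm{Id}_2$ described above, whose magnitude is calibrated to fit within the target error bound.
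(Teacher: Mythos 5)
Your high-level architecture is right: mollify at scale $l\sim \|\mathcal{D}\|_0^{1/2}/M$, shift to positive definiteness, decompose into rank-one terms, then iterate the \lq step' of Proposition \ref{step-hld} with geometrically scaled frequencies $\lambda_k\sim \sigma^k/l$. This matches the paper in outline. However there are two substantive gaps in how you handle the positivity shift and the number of decomposition terms, and one accounting slip about the commutator error.

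First, the shift. You propose adding $\rho\,\mathrm{Id}_2$ to the mollified deficit with $\rho\lesssim \|\mathcal{D}\|_0/\sigma$, "calibrated to fit within the target error bound." This cannot work: since nothing is assumed about the sign of $\mathcal{D}$, the mollified deficit $\mathfrak{D}$ may have eigenvalues close to $-\|\mathcal{D}\|_0$, and to render $\mathfrak{D}+\rho\,\mathrm{Id}_2$ positive definite you would need $\rho\gtrsim\|\mathcal{D}\|_0$, which is far larger than what your scheme can afford as a residual. The paper circumvents this by \emph{not} treating the shift as a residual at all: it subtracts the linear map $x\mapsto \frac{2(\|\mathfrak{D}\|_0+\|\mathcal{D}\|_0)}{r_0}\,x$ from the mollified $\mathfrak{w}$, which modifies $\sym\nabla \mathfrak{w}$ by $-\frac{2(\|\mathfrak{D}\|_0+\|\mathcal{D}\|_0)}{r_0}\mathrm{Id}_2$ and hence shifts the deficit to $\mathfrak{D}'=\mathfrak{D}+\frac{2(\|\mathfrak{D}\|_0+\|\mathcal{D}\|_0)}{r_0}\mathrm{Id}_2 > 0$. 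The oscillations then cancel $\mathfrak{D}'$ exactly up to the step errors, so the shift magnitude $\sim\|\mathcal{D}\|_0$ never appears in the final residual; it only shows up in $\|\tilde w - w\|_1$ and $\|\tilde w\|_2$, where it is harmless. Without this device your plan cannot close.

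Second, the decomposition lemma. You invoke Lemma \ref{decompose} and write $N_0$ directions, but later speak of "the three directions of the optimal pointwise decomposition of Lemma \ref{decomposeId}." These are not interchangeable here. The $\mathcal{C}^2$ bound (\ref{stg-hld3}) is $CM\sigma^3$, and the $3$ is precisely the number of one-dimensional oscillation steps: each application of Proposition \ref{step-hld} multiplies the $\mathcal{C}^2$ size by roughly $\sigma$. If you iterate $N_0$ (generically $\geq 6$) times, you obtain $CM\sigma^{N_0}$, which is strictly worse and feeds directly into the exponent bookkeeping in Theorem \ref{Holderdelta_01}, destroying the $1/7$ threshold. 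After the positivity shift, the matrix field $G(x)=\mathrm{Id}_2+\frac{r_0}{2(\|\mathfrak{D}\|_0+\|\mathcal{D}\|_0)}\mathfrak{D}(x)$ takes values in a ball of radius $r_0$ around $\mathrm{Id}_2$, so the single decomposition of Lemma \ref{decomposeId} applies uniformly and gives exactly three terms; Lemma \ref{decompose}, designed for the whole cone $\mathbb{R}^{2\times 2}_{\mathrm{sym},>}$, is neither needed nor affordable.

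Finally, a smaller point: you treat the commutator error $\|(\nabla v\otimes\nabla v)\ast\varphi_l - \nabla\hat v\otimes\nabla\hat v\|_0\leq Cl^2M^2\sim\|\mathcal{D}\|_0$ as a residual that must be "absorbed into the second error term by a further tightening of $l$." But shrinking $l$ to $\|\mathcal{D}\|_0^{1/2}/(M\sigma^{1/2})$ in order to beat this down to $\|\mathcal{D}\|_0/\sigma$ would inflate $\|\tilde v\|_2$ by another factor of $\sigma^{1/2}$, again breaking (\ref{stg-hld3}). In fact no tightening is needed: the commutator error is a contribution to $\mathfrak{D}$ (it is precisely what one bounds to get $\|\mathfrak{D}\|_m\leq C\|\mathcal{D}\|_0 / l^m$), and since the oscillation steps cancel $\mathfrak{D}'$ exactly, this term never survives to the final deficit $\tilde{\mathcal{D}}$. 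The only genuine residuals in (\ref{stg-hld1}) are $\|A - \mathfrak{A}\|_0 \leq Cl^\beta\|A\|_{0,\beta}$ and the three step errors of size $\delta^2/(\lambda_k l_k)\sim\|\mathcal{D}\|_0/\sigma$ from (\ref{stephld2}).
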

\begin{proof}
Analogously to \cite[Proposition 4]{CDS}, the proof is split into three parts.

{\bf 1. Mollification.}  Let $\varphi\in\mathcal{C}_c^\infty(B(0,1))$
be the standard mollifier in $2$d, as in Lemma \ref{stima}. Since
$v$, $w$ and $A$ can be extended on the whole $\mathbb{R}^2$, with all
their relevant norms increased at most $C$ times ($C$ depends here on
the curvature of the boundary $\partial\Omega$), we may define:
$$ {\mathfrak v} = v \ast \varphi_l, \qquad  {\mathfrak w} := w
\ast \varphi_l, \qquad   {\mathfrak A}  := A \ast \varphi_l \qquad
\mbox{with } \quad l=\frac{\|\mathcal{D}\|_0^{1/2}}{M} <1. $$
Applying Lemma \ref{stima} and noting (\ref{assumpkot2}), we
immediately get the following uniform error bounds for $ {\mathfrak
  v}$, $ {\mathfrak w}$, $ {\mathfrak A}$ and for the induced deficit 
$ {\mathfrak D} = {\mathfrak A} - \big(\frac{1}{2} \nabla {\mathfrak
  v}\otimes\nabla {\mathfrak v} + \sym\nabla {\mathfrak w}\big)$:
\bee\label{mol1}
\begin{split}
  \| {\mathfrak v}  - v \|_1  + \| {\mathfrak w} - w\|_1  & \leq C l
 (\|v\|_2 + \|w\|_2) \leq C \|\mathcal{D}\|_0^{1/2}, \\
  \| {\mathfrak A} - A\|_0 & \leq C l^\beta \|A\|_{0,\beta}, \\
 \|{\mathfrak D}\|_m & \leq 
\|\mathcal{D}\ast\varphi_l\|_m + 
\| (\nabla v\ast \varphi_l)\otimes ( \nabla v\ast \varphi_l)
- (\nabla v \otimes \nabla v) \ast \varphi_l \|_m \\ & \leq
\frac{C}{l^m} \|\mathcal{D}\| + \frac{C}{l^{m-2}}
\|v\|_2^2 \leq  \frac{C}{l^m} \|\mathcal{D}\|_0 \qquad \forall m=0\ldots 3.
\end{split}
\eee 
In the proof of the last inequality above, we used (\ref{d}) with the H\"older exponent $\alpha=1$.

We note that so far we have simply exchanged the lower regularity
fields $v$, $w$, $A$ with their smooth approximations, at the expense
of the error that,  as we shall see below, is compatible with
the that postulated in (\ref{stg-hld1}) - (\ref{stg-hld3}). The
following estimate, however, reflects the advantage of averaging
through mollification that results in the control of $\mathcal{C}^3$
norm of ${\mathfrak v}$ by the $\mathcal{C}^2$ norm:
\bee\label{step-v}
\forall m=1,2\qquad \| \nabla {\mathfrak v} \|_{m} \leq \|{\mathfrak v}\|_{m+1} \leq 
\frac{C}{l^{m-1}}\|v\|_2 \leq \frac{C}{l^{m-1}}\|\mathcal{D}\|_0^{1/2}, 
\eee 
where again we used Lemma \ref{stima} and (\ref{assumpkot2}). Note
that the scaling bound (\ref{step-v}) is consistent with the second
requirement in (\ref{step-a0}) of Proposition \ref{step-hld}.
We also record the following simple bound:
\bee\label{step-w}
\|\mathfrak w\|_2 \le C \|w\|_2 \le CM.
\eee
 
\smallskip

{\bf 2. Modification and positive definiteness.} 
Contrary to the \lq stage' construction in the  proof of Proposition \ref{stage}, we do not know
whether the original defect $\mathcal{D}$ (and hence the induced
defect ${\mathfrak D}$) is positive definite, so that Lemma
\ref{decompose} could be used. In any case, we need to keep the
number of terms in the decomposition (\ref{Bdecompose}) into rank-one matrices
as small as possible. 

We now further modify ${\mathfrak w}$ in order to use the optimal
decomposition in (\ref{N=3Id}). Let $r_0$ be as in Lemma
\ref{decomposeId} and define:
$${\mathfrak w}'= {\mathfrak w}  - 2\frac{(\|{\mathfrak D}\|_0 +
    \|\mathcal{D}\|_0)}{r_0}id_2, \qquad \quad 
 {\mathfrak D}' = {\mathfrak A} - \big(\frac{1}{2} \nabla {\mathfrak
  v}\otimes\nabla {\mathfrak v} + \sym\nabla {\mathfrak w}'\big). $$
Clearly, by (\ref{mol1}) we get:
\begin{equation}\label{res}
\|{\mathfrak w}' - {\mathfrak w} \|_2 \leq C (\|{\mathfrak D}\|_0 +
    \|\mathcal{D}\|_0)\leq C \|\mathcal{D}\|_0. 
\end{equation}

Note now that:
$$ {\mathfrak D}'(x) = 2\frac{(\|{\mathfrak D}\|_0 +
    \|\mathcal{D}\|_0)}{r_0}\mbox{Id}_2 + {\mathfrak D}(x) = 
2\frac{(\|{\mathfrak D}\|_0 + \|\mathcal{D}\|_0)}{r_0}
\Big(\mbox{Id}_2 + \frac{r_0}{2(\|{\mathfrak D}\|_0 +
    \|\mathcal{D}\|_0)}{\mathfrak D}\Big) \qquad \forall x\in\bar\Omega. $$
By Lemma \ref{decomposeId} we may apply (\ref{N=3Id}) to the scaled defect
$ G= {\rm Id}_2 + \frac{r_0}{2(\|{\mathfrak D}\|_0 +  \|\mathcal{D}\|_0)}{\mathfrak D}$
and arrive at:
\begin{equation}\label{mo}
{\mathfrak D}'(x) = \sum_{k=1}^3 2\frac{(\|{\mathfrak D}\|_0 +
  \|\mathcal{D}\|_0)}{r_0} \Phi_k(G(x)) \xi_k \otimes \xi_k  = 
\sum_{k=1}^3 a_k^2(x) \xi_k \otimes \xi_k 
\qquad \forall x\in\bar\Omega,
\end{equation}
where $\Big\{a_k = \big(2\frac{(\|{\mathfrak D}\|_0 + \|\mathcal{D}\|_0)}{r_0} 
\Phi_{k}\circ G\big)^{1/2}\Big\}_{k=1}^3$ are positive smooth functions on $\bar \Omega$. 
We claim that:   
\bee\label{step-a}
\forall k=1\ldots 3 \quad \forall m=0\ldots 3
\qquad \|a_k\|_m \leq  \frac{C}{l^m}\|\mathcal{D}\|_0^{1/2}.
\eee 
Indeed, for $m=0$ this inequality follows directly by
$\|{\mathfrak D}\|_0 \leq C\|\mathcal{D}\|_0$. For $m=1\ldots 3$ we
use Lemma \ref{interpol} to each $\psi=\Phi_k^{1/2}$ and $f=G$, where
noting that $\|G\|_0\leq C$ and recalling (\ref{mol1}) yields:
\bee\label{step-am}
\begin{split}
\|a_k\|_m & \leq \Big(2\frac{(\|{\mathfrak D}\|_0 +
  \|\mathcal{D}\|_0)}{r_0}\Big)^{1/2} C \|G\|_m \\ & \leq C(\|{\mathfrak D}\|_0 +
  \|\mathcal{D}\|_0)^{1/2}\Big(C + \frac{r_0}{2(\|{\mathfrak D}\|_0 +
  \|\mathcal{D}\|_0)}\|{\mathfrak D}\|_m\Big) \\ & \leq 
C \Big((\|{\mathfrak D}\|_0 +
  \|\mathcal{D}\|_0)^{1/2} + \frac{1}{(\|{\mathfrak D}\|_0 +
  \|\mathcal{D}\|_0)^{1/2}}\frac{1}{l^m}\|\mathcal{D}\|_0\Big) \leq 
C \Big( \|\mathcal{D}\|_0^{1/2} + \frac{1}{l^m}\|\mathcal{D}\|_0^{1/2}\Big)
\end{split}
\eee
and hence achieves (\ref{step-a}). Note that the scaling bound
(\ref{step-a}) is consistent with the first requirement in (\ref{step-a0}) of Proposition \ref{step-hld}.

\smallskip 

{\bf 3. Iterating the one-dimensional oscillations.} 
We set $v_1={\mathfrak v}$, $w_1={\mathfrak w}$ and inductively define
$v_{k+1}\in\mathcal{C}^3(\bar\Omega)$ and $w_{k+1}\in\mathcal{C}^2(\bar\Omega, \mathbb{R}^2)$
for $k=1,2,3$ by means of Proposition \ref{step-hld} applied to $v_k$,
$w_k$, the function $a_k$ and the unit vector $\xi_k$ appearing in
(\ref{mo}), with the parameters:
$$l_k = \frac{l}{\sigma^{k-1}} <1, \qquad \lambda_k =
\frac{1}{l_{k+1}} > \frac{1}{l_k},$$
and with the remaining three parameters:
\begin{equation}\label{ko2}
\delta_3 \geq \delta_2\geq\delta_1 = \max_{m=1,2} \big\{l^m\|\nabla {\mathfrak v}\|_m\big\}
+ \max_{m=0\ldots 3, ~ k=1\ldots 3} \big\{l^m\|a_k\|_m\big\} 
\end{equation}
as indicated below. We then finally set: $\tilde v = v_4$ and $\tilde w = w_4$.

We start by checking that the assumptions of Proposition
\ref{step-hld} are satisfied. Namely, we claim that $\delta_k, l_k\in
(0,1)$ together with:
\begin{equation}\label{2.23}
\|a_k\|_m \leq\frac{\delta_k}{l_k^m} \quad \forall m=0\ldots 3 \qquad
\mbox{ and } \qquad \|\nabla v_k\|_m \leq \frac{\delta_k}{l_k^m} \quad
\forall m=1,2,
\end{equation}
at each iteration step $k=1,2,3$, if only the constant $\delta_0$ in
(\ref{assumpkot}) is appropriately small.

Indeed, $\delta_1\leq C \|\mathcal{D}\|_0^{1/2}$ in view of (\ref{step-v}) and (\ref{step-a}),
so $\delta_1<1$ if only $\delta_0\ll 1$. Further, by the definition
(\ref{ko2}) it follows that: 
$\|a_k\|_m = \frac{1}{l^m} l^m \|a_k\|_m \leq \frac{\delta_1}{l^m}\leq
\frac{\delta_k}{l_k^m}$, so the first assertion in (\ref{2.23})
holds. For the second assertion, we see directly that it holds when
$k=1$, as: $\|\nabla v_1\|_m = \frac{1}{l^m} l^m \|\nabla {\mathfrak
  v}\|_m \leq \frac{\delta_1}{l^m}$. On the other hand, using
induction on $k$ and exploiting (\ref{stephld1v}), we get:
\begin{equation*}
\begin{split}
\|\nabla v_{k+1}\|_m & \leq  \|\nabla v_k\|_m + \|\nabla v_{k+1} -
\nabla v_k\|_m \leq \frac{\delta_k}{l_k^m} + 
C \delta_{k}\lambda_k^m \\ & \leq \delta_k\Big(\frac{1}{l_{k+1}^m} +
\frac{C}{l_{k+1}^m}\Big) = C\frac{\delta_k}{l_{k+1}^m} \leq
\frac{\delta_{k+1}}{l_{k+1}^m} \quad \qquad \forall
m=1,2 \quad \forall k=1,2.
\end{split}
\end{equation*}
The proof of (\ref{2.23}) is now complete for the choice $\delta_{k+1}
= C\delta_k$, where $C>1$ is, as always, an appropriately large
universal constant. Consequently: $\delta_2, \delta_3\leq
C\|\mathcal{D}\|_0^{1/2}<1$ if only $\delta_0\ll 1$.

\smallskip

{\bf 4.} We now directly verify the concluding estimates of Proposition
\ref{stg-hld}. We have, in view of the definition of $ {\mathfrak D}'$ and (\ref{mo}):
\begin{equation*}
\begin{split}
\tilde{\mathcal{D}} & = A -  {\mathfrak A} + {\mathfrak D}' + \big(\frac{1}{2} \nabla
v_1\otimes \nabla v_1 + \sym\nabla w_1\big) - \big(\frac{1}{2} \nabla
v_4\otimes \nabla v_4 + \sym\nabla w_4\big) \\ & = A - {\mathfrak A} - \sum_{k=1}^3
\Big(\big(\frac{1}{2} \nabla
v_{k+1}\otimes \nabla v_{k+1} + \sym\nabla w_{k+1}\big) - \big(\frac{1}{2} \nabla
v_k\otimes \nabla v_k + \sym\nabla w_k + a_k\xi_k\otimes \xi_k\big)\Big),
\end{split}
\end{equation*}
and thus by (\ref{mol1}), (\ref{stephld2}) and the definition of $l$, there
follows (\ref{stg-hld1}):
\begin{equation*}
\begin{split}
\|\tilde{\mathcal{D}}\|_0 & \leq \| A -  {\mathfrak A}\|_0 + C
\sum_{k=1}^3\frac{\delta_k^2}{\lambda_k l_k} \leq C\Big(l^\beta
\|A\|_{0,\beta} + \delta_3^2\sum_{k=1}^3\frac{1}{\lambda_k l_k}\Big)
\\ & \leq C \Big(\frac{\|\mathcal D\|_0^{\beta/2}}{M^\beta} \|A\|_{0,\beta}
+ 3 \frac{\delta_3^2}{\sigma}\Big) \leq C \Big(\frac{\|\mathcal D\|_0^{\beta/2}}{M^\beta} \|A\|_{0,\beta}
+ \frac{1}{\sigma}\|\mathcal{D}\|_0\Big).
\end{split}
\end{equation*}
We now check (\ref{stg-hld2v}), using (\ref{mol1}), (\ref{res}) and (\ref{stephld1w}):
\bee\label{gou}
\begin{split}
\|\tilde v - v\|_1 & \leq \|{\mathfrak v} - v\|_1 +
\sum_{k=1}^3\|v_{k+1}- v_k\|_1 \leq C\|\mathcal{D}\|_0^{1/2} + C
\sum_{k=1}^3\delta_k \leq C\|\mathcal{D}\|_0^{1/2} \\ 
\|\tilde w - w\|_1 & \leq \|{\mathfrak w} - w\|_1 + \|{\mathfrak w}' - {\mathfrak w}\|_1 +
\sum_{k=1}^3\|w_{k+1}- w_k\|_1 \\ & \leq C\Big(\|\mathcal{D}\|_0^{1/2} +  \|\mathcal{D}\|_0
+ \sum_{k=1}^3\delta_k (1+ \|\nabla v_k\|_0)\Big) \leq C\|\mathcal{D}\|_0^{1/2} 
\Big(1+ \sum_{k=1}^3 \|\nabla v_k\|_0\Big)  \\ & \leq 
C\|\mathcal{D}\|_0^{1/2}  \Big(1+ \|\nabla v\|_0 + \|{\mathfrak v} -
v\|_1 + \sum_{k=1}^2 \| v_{k+1}-v_k\|_1\Big)  \\ & \leq
C\|\mathcal{D}\|_0^{1/2} \big(1+ \|\nabla v\|_0 + \|\mathcal{D}\|_0^{1/2} \big)  
\leq C\|\mathcal{D}\|_0^{1/2} \big(1+ \|\nabla v\|_0 \big).  
\end{split}
\eee
Finally, the first bound in (\ref{stg-hld3}) follows by
(\ref{step-v}) and (\ref{stephld1v}):
\begin{equation*}
\begin{split}
\|\tilde v\|_2 & \leq \|{\mathfrak v}\|_2 + \sum_{k=1}^3\|v_{k+1} -
v_k\|_2 \leq \frac{C}{l}\|\mathcal{D}\|_0^{1/2} + 
C \sum_{k=1}^3\delta_k\lambda_k \\ & \leq \frac{C}{l}\|\mathcal{D}\|_0^{1/2} + C\delta_3
\sum_{k=1}^3\frac{\sigma^k}{l} 
\leq \frac{C}{l}\|\mathcal{D}\|_0^{1/2} (1+\sigma^3) \leq CM\sigma^3,
\end{split}
\end{equation*}
while the second bound is obtained by:
\begin{equation*}
\begin{split}
\|\tilde w\|_2 & \leq \|{\mathfrak w}\|_2 + \|{\mathfrak w}' - {\mathfrak w}\|_2
+ \sum_{k=1}^3\|w_{k+1} - w_k\|_2 \leq C \Big(M + \|\mathcal{D}\|_0+ 
\sum_{k=1}^3\delta_k\lambda_k (1+\|\nabla v_k\|_0)\Big) \\ & \leq C
\Big(M + \delta_3\sum_{k=1}^3\frac{\sigma^3}{l} (1+\|\nabla
v_k\|_0)\Big) \leq C M \Big(1 + \sigma^3 + \sigma^3 \sum_{k=1}^3\|\nabla v_k\|_0\Big) 
\\ & \leq CM\sigma^3\Big(1+ \sum_{k=1}^3\|\nabla v_k\|_0\Big) \leq
CM\sigma^3\big(1+\|\nabla v\|_0\big).
\end{split}
\end{equation*}
in view of (\ref{step-w}), (\ref{res}) and reasoning as in (\ref{gou}).
\end{proof}
 
\section{The $\mathcal{C}^{1,\alpha}$ approximations - a proof of
  Theorem \ref{w2oi-hld}.}\label{nareszcie} 

We are now in a position to state the final intermediary approximation
result, parallel to {\cite[Theorem 1]{CDS}}.  

\begin{theorem}\label{Holderdelta_01}
Assume that $\Omega\subset\mathbb{R}^2$ is an open, bounded
domain. Given are functions $v\in\mathcal{C}^2(\bar\Omega)$, $w \in
\mathcal{C}^2(\bar \Omega, \R^2)$ and $A \in
\mathcal{C}^{0,\beta}(\bar \Omega, \R^{2\times 2}_{sym})$ for some
$\beta\in (0,1)$, such that the deficit $\mathcal{D}$ below is
appropriately small:
\begin{equation}\label{delta_0}
\mathcal{D} = A - \big(\frac 12 \nabla  v \otimes \nabla v + \sym\nabla
w\big), \qquad 0<\|\mathcal{D}\|_0<\delta_0\ll  1. 
\end{equation}
Fix the exponent:
\begin{equation}\label{expo}
0<\alpha< \min \Big\{\frac 17, \frac{\beta}2\Big\}.
\end{equation}
Then, there exist  $\bar v \in \mathcal{C}^{1,\alpha}(\bar \Omega)$
and $\bar  w\in \mathcal{C}^{1,\alpha}(\bar \Omega, \R^2)$  such that:
\bee\label{equm}
\frac 12 \nabla \bar v  \otimes \nabla \bar v + \sym \nabla \bar w = A, 
\eee 
\bee\label{estmv}
 \|\bar v - v\|_1  \le C \|\mathcal{D}\|^{1/2}_0 \qquad \mbox{ and }
\qquad \|\bar w - w \|_1  \leq C (1+ \|\nabla \tilde v\|_0) \|\mathcal{D}\|^{1/2}_0,
\eee 
where $C>0$ is a constant depending on $\alpha$ but independent of all other parameters.
\end{theorem}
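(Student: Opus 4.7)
The plan is to iterate Proposition \ref{stg-hld} to generate sequences $\{v_n\}_{n\geq 1}\subset\mathcal{C}^2(\bar\Omega)$ and $\{w_n\}_{n\geq 1}\subset\mathcal{C}^2(\bar\Omega,\R^2)$, starting from $(v_1,w_1)=(v,w)$, so that the corresponding defects $\mathcal{D}_n = A - (\frac{1}{2}\nabla v_n\otimes\nabla v_n + \sym\nabla w_n)$ decay geometrically while the $\mathcal{C}^2$ norms of $v_n,w_n$ grow at a controlled geometric rate. An interpolation between $\mathcal{C}^1$ and $\mathcal{C}^2$ will then show that $\{(v_n,w_n)\}$ is Cauchy in $\mathcal{C}^{1,\alpha}$, producing the desired solution $(\bar v,\bar w)$ of (\ref{equm}).

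To set up the iteration, I would fix $\mu>1$ large and the growth exponent $\kappa = \max\{3,\,(2-\beta)/(2\beta)\}$, and prescribe the geometric targets $\delta_n := \|\mathcal{D}\|_0\,\mu^{-(n-1)}$ and $M_n := M_1\mu^{\kappa(n-1)}$, where $M_1$ majorizes $\max\{\|v\|_2,\|w\|_2,1\}$. At the $n$-th stage, I would apply Proposition \ref{stg-hld} with parameters $M_n$ and a constant $\sigma_n \equiv \sigma$ chosen so that both contributions to the new defect in (\ref{stg-hld1}) are at most $\delta_{n+1}/2$. A short calculation shows this requires $\sigma \gtrsim \mu$ (to absorb the $\sigma^{-1}\|\mathcal{D}\|_0$ term) and $M_n^\beta \gtrsim \mu^{n(1-\beta/2)}$ (to absorb the mollification error $\|A\|_{0,\beta}M^{-\beta}\|\mathcal{D}\|_0^{\beta/2}$), forcing $\kappa\geq(2-\beta)/(2\beta)$. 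Simultaneously, to ensure that $M_{n+1}$ majorizes the $\mathcal{C}^2$ output (\ref{stg-hld3}), which is of order $M_n\sigma^3(1+\|\nabla v_n\|_0)$, one needs $\mu^\kappa\gtrsim\sigma^3\gtrsim\mu^3$, forcing $\kappa\geq 3$. With $\kappa$ as chosen and $\mu,\sigma$ taken sufficiently large universally, all constraints become compatible and the iteration is well-defined; note also that $\delta_n\leq\delta_1<\delta_0$ keeps us in the regime (\ref{assumpkot}) throughout.

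For the convergence, from (\ref{stg-hld2v}) and the geometric decay of $\delta_n$ I would obtain $\sum_n \|v_{n+1}-v_n\|_1 \leq C\sum_n \delta_n^{1/2} \leq C\|\mathcal{D}\|_0^{1/2}$, which via telescoping both establishes the first bound in (\ref{estmv}) and shows that $\|\nabla v_n\|_0$ remains uniformly bounded, yielding the $\bar w$ bound in (\ref{estmv}) analogously. For the H\"older regularity, I would interpolate:
\begin{equation*}
\|v_{n+1}-v_n\|_{1,\alpha} \leq C\|v_{n+1}-v_n\|_1^{1-\alpha}\|v_{n+1}-v_n\|_2^{\alpha}\leq C\delta_n^{(1-\alpha)/2}M_{n+1}^{\alpha},
\end{equation*}
which is a geometric series with ratio $\mu^{\kappa\alpha - (1-\alpha)/2}$, summable precisely when $\alpha<1/(2\kappa+1) = \min\{1/7,\beta/2\}$, matching (\ref{expo}). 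The same argument applied to $\{w_n\}$ using the second bound in (\ref{stg-hld2v}) and (\ref{stg-hld3}) produces a limit $\bar w\in\mathcal{C}^{1,\alpha}$.

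The main obstacle will be the simultaneous balancing of the three parameters $\mu$, $\kappa$, $\sigma$ so that the stage estimates and the H\"older summability condition hold, and verifying that the threshold produced this way is precisely $\min\{1/7,\beta/2\}$; here the competition between the two lower bounds on $\kappa$ is the source of the twofold form of the exponent in (\ref{expo}). Once the Cauchy property in $\mathcal{C}^{1,\alpha}$ is established, the identity $\frac{1}{2}\nabla\bar v\otimes\nabla\bar v + \sym\nabla\bar w = A$ follows by passage to the $\mathcal{C}^0$ limit, since $\|\mathcal{D}_n\|_0\leq\delta_n\to 0$.
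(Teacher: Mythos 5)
Your proposal follows essentially the same route as the paper's proof: iterate Proposition~\ref{stg-hld}, prescribe geometric decay of $\|\mathcal{D}_n\|_0$ and controlled geometric growth of the $\mathcal{C}^2$ norms, and pass to the $\mathcal{C}^{1,\alpha}$ limit by interpolating between the convergent $\mathcal{C}^1$ data and the divergent $\mathcal{C}^2$ data. The bookkeeping is slightly different --- you track the per-stage decay ratio $\mu$ and growth exponent $\kappa$ directly, while the paper works with a single parameter $\sigma$ and a decay exponent $s$, with $M_k$ growing like $\sigma^{3k}$; the two are related by $\mu=\sigma^s$, $\kappa\approx 3/s$, and the admissible $\alpha$ range coincides since $\sup_{s<\min\{1,6\beta/(2-\beta)\}} s/(6+s) = \inf_{\kappa>\max\{3,(2-\beta)/(2\beta)\}} 1/(2\kappa+1) = \min\{1/7,\beta/2\}$.

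There is, however, one genuine slip in the parameter setup. You fix $\kappa=\max\{3,(2-\beta)/(2\beta)\}$ exactly and then assert that taking $\mu,\sigma$ ``sufficiently large universally'' makes all constraints compatible. This fails at the critical value. The stage requires $\sigma\geq C_1\mu$ so that $C\sigma^{-1}\|\mathcal{D}_n\|_0\leq\delta_{n+1}/2$, and simultaneously $\mu^\kappa\geq C_2\sigma^3(1+\|\nabla v_n\|_0)$ so that $M_{n+1}$ majorizes $\|v_{n+1}\|_2$ from \eqref{stg-hld3}. Combining these gives $\mu^{\kappa-3}\geq C_3$ with $C_3>1$, which is impossible when $\kappa=3$ exactly. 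You must take $\kappa$ \emph{strictly} larger than $\max\{3,(2-\beta)/(2\beta)\}$; given any target $\alpha<\min\{1/7,\beta/2\}$, choose $\kappa$ in the nonempty open interval $\big(\max\{3,(2-\beta)/(2\beta)\},\ (1-\alpha)/(2\alpha)\big)$, which leaves room for $\mu$ to absorb the constants while still giving a contractive geometric series. This is precisely the role of the strict inequality $s<\min\{1,\tfrac{6\beta}{2-\beta}\}$ in the paper --- note the term $C\sigma^{s-1}$ in \eqref{ges}, which decays to zero only when $s<1$ strictly. With this adjustment your argument is correct and produces the stated threshold.
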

\begin{proof} 
The exact solution to (\ref{equm}) will be obtained as the
$\mathcal{C}^{1,\alpha}$ limit of sequences of successive
approximations $\{v_k\in \mathcal{C}^2(\bar\Omega), ~ w_k \in
\mathcal{C}^2(\bar \Omega, \R^2)\}_{k=1}^\infty$. 

\smallskip

{\bf 1. Induction on stages.}  
We set $v_0=v$ and $w_0=w$. Given $v_k$ and $w_k$, define $v_{k+1}$ and $w_{k+1}$ by
applying Proposition \ref{stg-hld} with parameters $\sigma$ and
$M_k$ that will be appropriately chosen below and that satisfy:
\begin{equation}\label{lew}
  M_k>\max\{\|v_k\|_2, \|w_k\|_2, 1\} \qquad \mbox{ and } \qquad \sigma >1.
\end{equation}
Following our notational convention, we define the $k$-th deficit
$\mathcal{D}_k = A - \big(\frac 12 \nabla  v_k \otimes \nabla v_k +
\sym\nabla w_k \big)$. In view of Proposition  \ref{stg-hld}, we get:
\bee\label{stgs1}
\|\mathcal{D}_{k+1}\|_0 \leq C  \Big(\frac{\|A\|_{0,\beta}}{M_k^\beta} \|\mathcal{D}_k\|_0^{\beta/2}
+ \frac{1}{\sigma} \|\mathcal{D}_k\|_0\Big),
\eee
\bee\label{stgs2v}
\|v_{k+1} - v_k\|_1  \leq C \|\mathcal{D}_k\|_0^{1/2} \qquad \mbox{
  and }\qquad
\|w_{k+1} - w_k\|_1 \leq C (1+ \|\nabla v_n\|_0) \|\mathcal{D}_k\|_0^{1/2},
\eee
\bee\label{stgs3}
\|v_{k+1} \|_2   \leq C   M_k \sigma^3  \qquad  \mbox{ and } \qquad 
\|w_{k+1}\|_2 \le  C(1+ \|\nabla v_k\|_0) M_k \sigma^3,
\eee
provided that (\ref{assumpkot}) holds for each $\mathcal{D}_k$. We
shall now validate this requirement, with the parameters:
\begin{equation}\label{Mn}
M_k = \Big({\mathfrak C} (1+\|\nabla v_0\|_0) \sigma^3\Big)^k M_0.
\end{equation}
In fact, we will inductively prove that one can have:
\bee\label{indstep} 
\|\mathcal{D}_k\|_0  \leq  \frac{1}{\sigma^{sk}} \|\mathcal{D}\|_0   
\qquad \mbox{ with any } \qquad 0<s<\min\big\{1, \frac{6\beta}{2-\beta}\big\}.
\eee 

Fix $s$ as indicated in (\ref{indstep}). Clearly, (\ref{indstep}) and
(\ref{lew}) hold for $k=0$. By (\ref{stgs1}) and the induction
assumption we obtain the bound:
\begin{equation}\label{ges} 
\sigma^{s(k+1)}\frac{\|\mathcal{D}_{k+1}\|_0}{\|\mathcal{D}\|_0} \leq
\frac{C \|A\|_{0,\beta} \|\mathcal{D}\|_0^{\beta/2 -1}
  \sigma^s}{M_0^\beta} \frac{1}{{\mathfrak C}^{k\beta}}
\Bigg(\frac{\sigma^{(1-\beta/2)(s-\frac{6\beta}{2-\beta})}}{(1+\|\nabla
  v_0\|_0)^\beta}\Bigg)^k + C\sigma^{s-1}.
\end{equation}
We see that in view of the condition on $s$ in (\ref{indstep}), both
$\sigma^{s-1}$ and $\sigma^{(1-\beta/2)(s-\frac{6\beta}{2-\beta})}$
are smaller than $1$. Further, it is possible to choose $\sigma>1$ so
that the second term in (\ref{ges}) be smaller than $1/2$ and that the
quotient term in parentheses above is also smaller than $1$. Then,
choose $M_0$ so that (\ref{lew}) holds for $k=0$ together with:
$$\frac{C \|A\|_{0,\beta} \|\mathcal{D}\|_0^{\beta/2 -1}
  \sigma^s}{M_0^\beta} <\frac{1}{2}.$$
This results in the first term in (\ref{ges}) being smaller than $1/2$
if only ${\mathfrak C}\geq 1$. Consequently, we get
$\sigma^{s(k+1)}{\|\mathcal{D}_{k+1}\|_0}/{\|\mathcal{D}\|_0} \leq 1$
as needed in (\ref{indstep}).

Observe now that by (\ref{stgs2v}) and by the established (\ref{indstep}):
\bee\label{kruk}
\begin{split}
\forall k\geq 0 \qquad \|\nabla v_{k} \|_0 & \leq \|\nabla v_0\|_0 +   
\sum_{i=0}^{k-1} \|v_{i+1} - v_i\|_1 \leq  \|\nabla v_0\|_0 + C \sum_{i=0}^{k-1}
\|\mathcal{D}_i\|_0^{1/2} \\ & \leq  \|\nabla v_0\|_0 +  C \Big(\sum_{i=0}^\infty
\frac{1}{\sigma^{si/2}}\Big)\|\mathcal{D}\|_0^{1/2} = 
\|\nabla v_0\|_0 +  \frac{C}{1-\sigma^{-s/2}}\|\mathcal{D}\|_0^{1/2}
\\ & \leq \|\nabla v_0\|_0 +  C \|\mathcal{D}\|_0^{1/2},
\end{split}
\eee
if only, say, $\sigma^s>4$ which can be easily achieved through the
choice of $\sigma$. Now, by (\ref{stgs3}) and (\ref{kruk}):
\begin{equation*}
\frac{\|v_{k+1}\|_2}{M_{k+1}} \leq \frac{1}{{\mathfrak C}}
\frac{C}{(1+\|\nabla v_0\|_0)} \quad \mbox{ and } \quad
\frac{\|w_{k+1}\|_2}{M_{k+1}} \leq \frac{1}{{\mathfrak C}}
\frac{C (1+\|\nabla v_k\|_0)}{(1+\|\nabla v_0\|_0)}\leq \frac{1}{{\mathfrak C}}
\frac{C ( 1+\|\nabla v_0\|_0+ \|\mathcal{D}\|_0^{1/2})}{(1+\|\nabla v_0\|_0)}.
\end{equation*}
Hence, taking the constant ${\mathfrak C}\gg 1$ large enough, we see that both
quantities above can be made smaller than $1$, proving therefore the
required (\ref{lew}).

\smallskip
  
{\bf 2. $\mathcal{C}^{1, \alpha}$ control of the approximating sequences $v_n$ and $w_n$.}
Let now $\alpha$ be an exponent as in (\ref{expo}). Choose $s$
satisfying (\ref{indstep}) and:
\begin{equation}\label{rob}
\alpha (6+s) - s <0.
\end{equation}
It is an easy calculation that $s$ satisfying (\ref{indstep}) and
(\ref{rob}) exists if and only if the exponent $\alpha$ is in the
range (\ref{expo}). Indeed, (\ref{rob}) is equivalent to
$\alpha<\frac{s}{6+s}$, while (\ref{indstep}) is equivalent to:
$$0<\frac{s}{6+s}<\min\Big\{\frac{1}{7}, \frac{\beta}{2}\Big\}.$$

We will prove that sequences $\{v_k, w_k\}_{k=0}^\infty$ are Cauchy in
$\mathcal{C}^{1,\alpha}(\bar\Omega)$. Firstly, by \eqref{stgs2v},
\eqref{kruk},\eqref{indstep}:
\bee\label{gli}
\begin{split}
\|v_{k+1} - v_{k}\|_1 & \leq C \|\mathcal{D}_k\|_0^{1/2}\leq \frac{C}{\sigma^{sk/2}}\|\mathcal{D}\|_0^{1/2}, \\ 
\|w_{k+1} - w_k\|_1 & \leq C (1+ \|\nabla v_k\|_0)
\|\mathcal{D}_k\|_0^{1/2} \leq \frac{C}{\sigma^{sk/2}}\big( 1+ \|\nabla
  v_0\|_0 + \|\mathcal{D}\|_0^{1/2}\big)\|\mathcal{D}\|_0^{1/2},
\end{split} 
\eee
so we see right away that they are Cauchy in
$\mathcal{C}^1(\bar\Omega)$. On the other hand, by \eqref{stgs3},
\eqref{kruk}, \eqref{indstep}:
\begin{equation*}
\|v_{k+1} - v_{k}\|_2  + \|w_{k+1}- w_k\|_2  \leq  C(1+ \|\nabla v_k\|_0) M_k \sigma^3 \leq C 
\big( 1+ \|\nabla  v_0\|_0 +
\|\mathcal{D}\|_0^{1/2}\big)\Big({\mathfrak C} (1+\|\nabla v_0\|_0)\sigma^{3}\Big)^k M_0,
\end{equation*}
so the sequences have the tendency to diverge in $\mathcal{C}^2(\bar\Omega)$.
Interpolating now the $\mathcal{C}^{1,\alpha}$ norm by \cite{Adams}:
$$ \|f\|_{0,\alpha} \le \|f\|_1^\alpha \|f\|^{1-\alpha}_0,$$ 
we obtain:
\bee\label{wro}
\begin{split}
\|\nabla(v_{k+1}  - v_k)\|_{0,\alpha}   + \|\nabla(w_{k+1} -
w_k)\|_{0,\alpha} & \leq C_0^{\alpha} (C_0\sigma^3)^{k\alpha}
M_0^\alpha \cdot C_0^{1-\alpha} \frac{1}{\sigma^{sk(1-\alpha)/2}} \\ &
= C_0 M_0^\alpha (C_0^\alpha)^h \Big( \sigma^{\frac{\alpha(6+s)-s}{2}}\Big)^k,
\end{split} 
\eee
where by $C_0$ we denoted an upper bound of all quantities involving
$C$, $v_0$, $\mathcal{D}$. It is clear that choosing $\sigma$
sufficiently large (so that $C_0 \sigma^{3-s/2}<1$), the resulting
bound (\ref{wro}) implies that $\{\nabla v_k, \nabla
w_k\}_{k=0}^\infty$  are Cauchy in $\mathcal{C}^{0,
  \alpha}(\bar\Omega)$, provided that (\ref{rob}) holds. We see that
the choice of exponent range in (\ref{expo}) so that the above
construction technique works, is optimal.

\smallskip

{\bf 3.} Concluding, we see that $\{v_k, w_k\}_{k=0}^\infty$  converge
to some $\bar v\in \mathcal{C}^{1,\alpha}(\bar \Omega)$ and $\bar w \in
\mathcal{C}^{1,\alpha} (\bar \Omega, \R^2)$. Since the defects in the
approximating sequence obeys: $\lim_{k\to\infty}\|\mathcal{D}_k\|_0=0$
by (\ref{indstep}), we immediately get (\ref{equm}). Additionally, by (\ref{gli}):
\begin{equation*}
\begin{split}
\|\bar v -  v\|_1 & \leq \sum_{k=0}^\infty \|v_{k+1}
- v_k\|_1 \leq C \Big(\sum_{k=0}^\infty \frac{1}{\sigma^{sk/2}}\Big)
\|\mathcal{D}\|_0^{1/2} = \frac{C}
{1-\sigma^{-s/2}}\|\mathcal{D}\|_0^{1/2}\leq C \|\mathcal{D}\|_0^{1/2}\\
\|\bar w - w\|_1 & \leq \sum_{k=0}^\infty \|w_{k+1}
- w_k\|_1 \leq C \Big(\sum_{k=0}^\infty \frac{1}{\sigma^{sk/2}}\Big)
(1+ \|\nabla v\|_0) \|\mathcal{D}\|_0^{1/2} \leq 
C (1+ \|\nabla v\|_0)  \|\mathcal{D}\|_0^{1/2}.
\end{split}
\end{equation*}
completing the proof of (\ref{estmv}).
\end{proof}
 
\bigskip

We are now ready to give:

\bigskip

\noindent {\bf Proof of Theorem \ref{w2oi-hld}.}

\noindent Fix a sufficiently small $\varepsilon>0$. We will construct
$\bar v \in \mathcal{C}^{1,\alpha}(\bar\Omega)$ and $\bar
w\in\mathcal{C}^{1,\alpha}(\Omega, \R^2)$ such that:
\begin{equation}\label{10}
A_0 = \frac 12 \nabla \bar v\otimes \nabla \bar v +\sym\nabla \bar w
\qquad \mbox{ in } \bar\Omega
\end{equation}
and:
\begin{equation}\label{11}
\|\bar v - v_0\|_0 + \|\bar w - w_0\|_0 < \varepsilon.
\end{equation}
In order to apply Theorem \ref{Holderdelta_01}, we need to decrease
the deficit $A_0 - \big(\frac 12 \nabla v_0\otimes \nabla v_0 +\sym\nabla w_0\big)$
so that it obeys (\ref{delta_0}). This will be done in three
steps. 

First, let $ \tilde v_0 \in \mathcal{C}^\infty(\bar\Omega)$,
$\tilde w_0\in\mathcal{C}^\infty(\Omega, \R^2)$ and $\tilde
A_0\in\mathcal{C}^\infty(\bar\Omega, \R^{2\times 2}_{sym})$ be such that:
\begin{equation}\label{12}
\begin{split}
& \|\tilde v_0 - v_0\|_1 + \|\tilde w_0 - w_0\|_1 + \|\tilde A_0 - A_0\|_0 < \varepsilon^2\\
&\exists \tilde c_0>0 \qquad A_0 - \big( \frac 12 \nabla \tilde
v_0\otimes \nabla \tilde v_0 +\sym\nabla \tilde w_0 ) > \tilde c_0
\mbox{Id}_2 \qquad \mbox{ in } \bar\Omega.
\end{split}
\end{equation}
Second, by Theorem \ref{weak2oi} and Remark \ref{remi}, there exists $v \in \mathcal{C}^{1}(\bar\Omega)$ and 
$w\in\mathcal{C}^{1}(\Omega, \R^2)$ such that: 
\begin{equation}\label{13}
\begin{split}
& \tilde A_0 = \frac 12 \nabla  v\otimes \nabla v +\sym\nabla w \qquad \mbox{ in } \bar\Omega,\\
& \|v - \tilde v_0\|_0 + \|w - \tilde w_0\|_0 < \varepsilon^2 \qquad
\mbox{ and } \qquad \|\nabla v - \nabla\tilde v_0\|_0 \leq C.
\end{split}
\end{equation}
Third, let $\tilde v \in \mathcal{C}^{2}(\bar\Omega)$ and $\tilde
w\in\mathcal{C}^{2}(\Omega, \R^2)$ be such that:
\begin{equation}\label{14}
\|v - \tilde v\|_1 + \|w - \tilde w\|_1 < \varepsilon^2. 
\end{equation}

By (\ref{13}), (\ref{14}) and (\ref{12}), we get:
\begin{equation}\label{15}
\begin{split}
\|A_0 - \big( \frac 12 \nabla  &\tilde v\otimes \nabla \tilde v
+\sym\nabla \tilde w \big)\|_0 \\ & \leq \|A_0 - \tilde A_0\|_0 + 
\|\big( \frac 12 \nabla  \tilde v\otimes \nabla \tilde v
+ \sym\nabla \tilde w \big) - \big( \frac 12 \nabla v\otimes \nabla v
+ \sym\nabla w \big)\|_0 \\ & \leq  \|A_0 - \tilde A_0\|_0 + \big(\|\nabla
v\|_0 + \|\nabla \tilde v\|_0\big)\|\nabla v - \nabla\tilde v\|_0 +
\|\nabla w- \nabla\tilde w\|_0 \\ & \leq \varepsilon^2 + \big(2\|\nabla
v_0\|_0 + 2\varepsilon^2 + C\big)\varepsilon^2 +\varepsilon^2 <\delta_0,
\end{split}
\end{equation}
as required in Theorem \ref{Holderdelta_01}, if only $\varepsilon$ is small enough.
We now apply Theorem \ref{Holderdelta_01} to $\tilde v$, $\tilde w$
and the original field $A_0$, and get $\bar v \in \mathcal{C}^{1,\alpha}(\bar\Omega)$ and $\bar
w\in\mathcal{C}^{1,\alpha}(\Omega, \R^2)$ satisfying (\ref{10}) and
such that:
\begin{equation*}
\begin{split}
\|\bar v - v_0 \|_0 + \|\bar w - w_0 \|_0 & \leq
C \big(1 + \|\nabla \tilde v\|_0\big) \|A_0 - \big( \frac 12 \nabla  \tilde v\otimes \nabla \tilde v
+\sym\nabla \tilde w \big)\|_0 + 3\varepsilon^2 \\ & \leq C \big(1+\varepsilon^2
+ \|\nabla v_0\|_0\big)^2 \varepsilon^2 + 3\varepsilon^2,
\end{split}
\end{equation*}
by (\ref{estmv}), (\ref{15}), (\ref{14}), (\ref{13}) and
(\ref{12}). Clearly (\ref{11}) follows, if $\varepsilon$ is small enough. 
\endproof
  
%\textcolor{red}{????: Write after  checking references for Laplacian= measure:  
%the optimal Monge-Ampere solution theorem. Here $f$ can be a measure!, 
%the 2nd derivative of $\mathcal{C}^{0,\alpha}$.}

\bigskip

The following  Corollary is of independent interest: 

\begin{corollary}\label{ss}
Let $\Omega, f, p, \alpha$ be as in the statement of Theorem \ref{weakMA}. Let $q\geq 2$.
Then, for all $v_0\in W^{1,q}(\Omega)$, there exists a sequence
$v_n\in \mathcal{C}^{1,\alpha}(\bar\Omega)$ weakly converging to $v_0$ in $W^{1,q}(\Omega)$, and such that:
$\Det \nabla^2 v_n = f$ in $\Omega$.
\end{corollary}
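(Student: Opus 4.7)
The plan is to obtain the required sequence by a diagonal extraction from the approximants produced by Theorem \ref{weakMA} applied to smooth approximants of $v_0$, together with a uniform sup-norm gradient bound coming from Remark \ref{remi}. Since $\mathcal{C}^\infty(\bar\Omega)$ is dense in $W^{1,q}(\Omega)$, first choose $\{\tilde v_k\}_{k=1}^\infty\subset\mathcal{C}^\infty(\bar\Omega)$ with $\tilde v_k\to v_0$ strongly in $W^{1,q}(\Omega)$, so that $M:=\sup_k\|\nabla\tilde v_k\|_{L^q(\Omega)}<\infty$. For each fixed $k$, apply Theorem \ref{weakMA} with initial datum $\tilde v_k$ to produce a sequence $\{v_{k,n}\}_{n=1}^\infty\subset\mathcal{C}^{1,\alpha}(\bar\Omega)$ with $\Det\nabla^2 v_{k,n}=f$ and $v_{k,n}\to\tilde v_k$ uniformly on $\bar\Omega$ as $n\to\infty$. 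A diagonal selection $n(k)$ with $\|v_{k,n(k)}-\tilde v_k\|_0<1/k$ then yields candidates $V_k:=v_{k,n(k)}$; by standard weak-compactness arguments, these will converge weakly to $v_0$ in $W^{1,q}(\Omega)$ once we verify $W^{1,q}$-boundedness of the family.

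The key auxiliary estimate is a uniform sup-norm gradient bound
\begin{equation*}
\|\nabla v_{k,n}-\nabla\tilde v_k\|_{0}\le C_\ast\qquad\text{for all }k,n,
\end{equation*}
where $C_\ast$ depends only on universal quantities such as $N_0$ from Lemma \ref{decompose} and on $\Omega$, but \emph{not} on $k$, $n$, nor on $\|\nabla\tilde v_k\|_{\infty}$. This follows by tracking the construction in the proof of Theorem \ref{w2oi-hld}: the intermediate $\mathcal{C}^1$-stage provided by Theorem \ref{weak2oi} contributes a gradient error at most $CN_0^{1/2}$ via the proof of Remark \ref{remi} (which actually produces $\|\nabla v-\nabla \tilde v_k\|_0\le CN_0^{1/2}$), while the preliminary smoothing, the $\mathcal{C}^2$-approximation and the H\"older refinement via Theorem \ref{Holderdelta_01} (whose output $\bar v$ obeys the crucial $\|\bar v-\tilde v\|_1\le C\|\mathcal{D}\|_0^{1/2}$) all contribute gradient errors that can be absorbed into an arbitrarily small number by tuning the internal parameters. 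Granting this bound, uniformly in $k$,
\begin{equation*}
\|\nabla V_k\|_{L^q}\le\|\nabla\tilde v_k\|_{L^q}+|\Omega|^{1/q}\|\nabla V_k-\nabla\tilde v_k\|_0\le M+C_\ast|\Omega|^{1/q}.
\end{equation*}

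To conclude, $V_k\to v_0$ strongly in $L^q(\Omega)$ since
\begin{equation*}
\|V_k-v_0\|_{L^q}\le|\Omega|^{1/q}\|V_k-\tilde v_k\|_0+\|\tilde v_k-v_0\|_{L^q}\le|\Omega|^{1/q}/k+\|\tilde v_k-v_0\|_{L^q}\to 0.
\end{equation*}
Combined with the $W^{1,q}$-boundedness just established, Banach--Alaoglu and the uniqueness of distributional limits yield the weak convergence $V_k\rightharpoonup v_0$ in $W^{1,q}(\Omega)$, while the identity $\Det\nabla^2 V_k=f$ holds by construction. The principal obstacle is precisely the verification of the uniform constant $C_\ast$: one must carefully audit each stage of the multi-step construction underlying Theorem \ref{weakMA} to confirm that no step introduces a dependence on $\|\nabla\tilde v_k\|_\infty$ or on higher $\mathcal{C}^m$-norms of $\tilde v_k$, which is essential since none of these are controlled during a generic $W^{1,q}$-approximation. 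Once this bookkeeping is done, the remainder of the argument is a routine diagonal extraction coupled with weak compactness.
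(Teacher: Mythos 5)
Your proposal is correct and follows essentially the same route as the paper's proof: take smooth approximants $\tilde v_k\to v_0$ in $W^{1,q}$, apply Theorem \ref{weakMA} to each, diagonalize, and use a uniform sup-norm gradient bound (traced back to Remark \ref{remi} together with the error terms in (\ref{estmv}), (\ref{12})--(\ref{15})) to establish $W^{1,q}$-boundedness, after which weak compactness and strong $L^q$-convergence identify the weak limit. The ``audit'' you flag as the principal obstacle is exactly what the paper carries out, arriving at the pointwise estimate $|\nabla v_n(x)|\le|\nabla\bar v_n(x)|+2\epsilon^2+C+C\delta_0^{1/2}$, with the constant $C$ coming from the universal $CN_0^{1/2}$ bound of Remark \ref{remi}; so your concern is legitimate but resolves exactly as you anticipate.
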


\begin{proof}
Let  $\bar v_n\in\mathcal{C}^1(\bar\Omega)$ converge to $v_0$ in
$W^{1,q}(\Omega)$. For every $\bar v_n$, consider the approximating sequence
$\{v_{n,k}\in\mathcal{C}^{1,\alpha}(\bar\Omega)\}_{k=1}^\infty$ as in
Theorem \ref{weakMA}, converging uniformly to $\bar v_n$. Define now
$\{v_n\}$ to be an appropriate diagonal sequence, so that it converges
to $v_0$ in $L^q(\Omega)$. We will check that $\{v_n\}$ is bounded in
$W^{1,q}$.

The boundedness of $\|v_n\|_{L^q}$ is clear from the convergence
statement. On the other hand, the proof of Theorem \ref{w2oi-hld} gives,
by (\ref{estmv}), (\ref{12}), (\ref{13}), (\ref{14}) and (\ref{15}):
$$|\nabla v_n(x)| \leq |\nabla \bar v_n(x)| + 2\epsilon^2 + C +
C\delta_0^{1/2} \leq |\nabla \bar v_n(x)| + C \qquad \forall x\in \Omega.$$
Consequently, $\|\nabla v_n\|_{L^q} \leq \|\nabla\bar
v_n\|_{L^q} + C\leq C$, which concludes the proof.
\end{proof}

\section{Rigidity results for $\alpha>2/3$ - a proof of Theorem
  \ref{rig1}.}
 
The crucial element in the proof of the rigidity Theorems \ref{rig1}
and \ref{rig2} is the following result, that is the \lq small slope
analogue' of \cite[Proposition 6]{CDS}: 

\begin{proposition}\label{deg}
Let $\Omega \subset \R^2$  be an open, bounded, simply connected domain.
Assume that for some
$\alpha\in (2/3, 1)$,  the function $v\in \mathcal{C}^{1,\alpha}(\bar \Omega)$ is a solution to: 
$$ \Det \nabla^2 v = f \qquad \mbox{ in } \bar\Omega, $$ 
where $f\in L^{p}(\Omega)$ and $p>1$.
Then the following degree formula holds true, for every open subset
$U$ compactly contained in $\Omega$ and every $g\in
L^\infty(\mathbb{R}^2)$ with $\mathrm{supp } ~g\subset \R^2\setminus \nabla v(\partial U)$:
\bee\label {area}
\int_U  (g \circ \nabla v)f  = \int_{\R^2} g(y) \deg(\nabla v, U, y)~ \mathrm{d}y. 
\eee
Above,  $\deg(\psi, U, y)$ denotes the Brouwer degree of a continuous
function $\psi:\bar U\to \R^2$ at a point $y\in \R^2 \setminus \psi(\partial U)$.  
\end{proposition}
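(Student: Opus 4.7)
The plan is to follow \cite[Proposition 6]{CDS}: apply the classical area formula to smooth mollifications $v_\ell=v\ast\varphi_\ell$ of $v$, and pass to the limit as $\ell\to 0$. The only obstruction to passing through the nonlinear quantity $\det\nabla^2 v_\ell$ is a commutator error whose control forces the threshold $\alpha>2/3$.

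First, by a standard density argument it suffices to treat $g\in\mathcal{C}_c^\infty(\R^2\setminus\nabla v(\partial U))$. Extending $v$ to a function in $\mathcal{C}^{1,\alpha}(\R^2)$, set $v_\ell:=v\ast\varphi_\ell$ for the standard mollifier of Lemma \ref{stima}. Since $\nabla v_\ell\to\nabla v$ uniformly on $\bar\Omega$ and since the compact set $\mathrm{supp}(g)$ is at positive distance from the compact set $\nabla v(\partial U)$, for all sufficiently small $\ell>0$ one has $\mathrm{supp}(g)\cap\nabla v_\ell(\partial U)=\emptyset$, and homotopy invariance of the Brouwer degree gives $\deg(\nabla v_\ell,U,y)=\deg(\nabla v,U,y)$ for every $y\in\mathrm{supp}(g)$. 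The classical area formula applied to the smooth map $\nabla v_\ell\colon\bar U\to\R^2$ then yields
\bees
\int_U(g\circ\nabla v_\ell)\det\nabla^2 v_\ell\,\mathrm{d}x=\int_{\R^2}g(y)\deg(\nabla v,U,y)\,\mathrm{d}y,
\eees
and it remains to show that the left-hand side converges to $\int_U(g\circ\nabla v)f\,\mathrm{d}x$.

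The key algebraic step exploits the quadratic structure of $\Det$. Setting
\bees
E_\ell:=\nabla v_\ell\otimes\nabla v_\ell-(\nabla v\otimes\nabla v)\ast\varphi_\ell,
\eees
and noting that $\cc$ commutes with convolution so that $\cc\big((\nabla v\otimes\nabla v)\ast\varphi_\ell\big)=-2(f\ast\varphi_\ell)$, one obtains $-2\det\nabla^2 v_\ell=-2(f\ast\varphi_\ell)+\cc E_\ell$. Testing against $g\circ\nabla v_\ell$ on $U$, the first contribution converges to $\int_U(g\circ\nabla v)f\,\mathrm{d}x$ by uniform convergence $g\circ\nabla v_\ell\to g\circ\nabla v$ and by $f\ast\varphi_\ell\to f$ in $L^p(\Omega)$, $p>1$. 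The argument is thus reduced to showing that the commutator term $\int_U(g\circ\nabla v_\ell)\,\cc E_\ell\,\mathrm{d}x$ vanishes as $\ell\to 0$.

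For this, the decisive geometric observation is that for $\ell$ small the function $g\circ\nabla v_\ell$ vanishes identically in an open neighbourhood of $\partial U$ inside $U$: continuity forces $\nabla v$ to be near the compact set $\nabla v(\partial U)$ close to $\partial U$, so uniform convergence pushes the same property onto $\nabla v_\ell$, forcing the composition with $g$ to be zero. This permits two integrations by parts with no boundary contribution, transferring $\cc$ onto the scalar factor. The chain rule gives $|\nabla^2(g\circ\nabla v_\ell)|\le C\big(|\nabla^2 v_\ell|^2+|\nabla^3 v_\ell|\big)$, while standard mollification bounds for $\nabla v\in\mathcal{C}^{0,\alpha}$ yield $\|\nabla^2 v_\ell\|_0\le C\ell^{\alpha-1}$ and $\|\nabla^3 v_\ell\|_0\le C\ell^{\alpha-2}$, whence $\|\nabla^2(g\circ\nabla v_\ell)\|_0\le C\ell^{\alpha-2}$. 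On the other hand, the commutator estimate \eqref{d} with $k=0$ gives $\|E_\ell\|_0\le C\ell^{2\alpha}\|\nabla v\|_{0,\alpha}^2$. Combining,
\bees
\Big|\int_U(g\circ\nabla v_\ell)\,\cc E_\ell\,\mathrm{d}x\Big|\le C\ell^{3\alpha-2},
\eees
which tends to $0$ precisely under the hypothesis $\alpha>2/3$, completing the proof.

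The main obstacle is this sharp balance of exponents: two derivatives must fall on $g\circ\nabla v_\ell$, at the cost $\ell^{\alpha-2}$, and the only compensating gain $\ell^{2\alpha}$ comes from the H\"older commutator, so the product has positive exponent exactly at the Proposition's threshold. The cancellation of boundary contributions, without which the integration by parts cannot be cleanly performed, relies on the geometric hypothesis $\mathrm{supp}(g)\cap\nabla v(\partial U)=\emptyset$ being stable under small uniform perturbations of $\nabla v$.
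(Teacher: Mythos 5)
Your proof is correct and reaches the same exponent threshold, but it is organized more economically than the paper's argument. The paper, following \cite[Proposition 6]{CDS}, first constructs an auxiliary potential $A\in W^{2,p}(\Omega)\cap\mathcal{C}^{0,\beta}(\bar\Omega)$ with $\cc A=-f$ and, using simple connectedness, a field $w$ with $A=\frac12\nabla v\otimes\nabla v+\sym\nabla w$; it then mollifies $v,w,A$ separately, splits the error into three terms, and after \emph{one} integration by parts estimates $\|\nabla^\perp(g\circ\nabla v_l)\|_0\cdot\|E_l\|_1\lesssim l^{\alpha-1}\cdot l^{2\alpha-1}$. Your observation is that $A$ and $w$ are dispensable: because the equation itself says $\cc(\nabla v\otimes\nabla v)=-2f$ in $\mathcal{D}'(\Omega)$ and convolution commutes with constant-coefficient differential operators (valid on $U\Subset\Omega$ for $\ell$ small), one has directly $\det\nabla^2 v_\ell=f\ast\varphi_\ell-\frac12\,\cc E_\ell$ with $E_\ell=\nabla v_\ell\otimes\nabla v_\ell-(\nabla v\otimes\nabla v)\ast\varphi_\ell$; indeed, in the paper's own computation the $\sym\nabla w_l$ contributions cancel identically and $A_l$ enters only through $\cc A_l=-f\ast\varphi_l$. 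You then integrate by parts \emph{twice}, trading $\|E_\ell\|_0\lesssim\ell^{2\alpha}$ against $\|\nabla^2(g\circ\nabla v_\ell)\|_0\lesssim\ell^{\alpha-2}$, which gives the identical $\ell^{3\alpha-2}$ and the same threshold $\alpha>2/3$. Your explicit remark that $g\circ\nabla v_\ell$ vanishes on a full neighbourhood of $\partial U$ for small $\ell$ (not merely on $\partial U$) is also a small improvement in clarity, since it makes the double integration by parts immune to any boundary-regularity issue for $U$. The one place you are too terse is the opening reduction to $g\in\mathcal{C}^\infty_c$: since $\mathrm{supp}\,g\subset\R^2\setminus\nabla v(\partial U)$ need not be bounded, one should first note (as the paper does) that $\deg(\nabla v,U,\cdot)$ vanishes on the unbounded component of $\R^2\setminus\nabla v(\partial U)$ and that $g\circ\nabla v$ is supported in $\nabla v(\bar U)$, so both sides of \eqref{area} are unchanged after multiplying $g$ by a cutoff equal to $1$ near the compact set $\nabla v(\bar U)$; only then does the smooth approximation and dominated convergence go through.
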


\begin{proof}
{\bf 1.} Fix $U$ and $g$ as in the statement of the Proposition. We first
recall \cite{Lloyd} that $\deg(\nabla v, U, \cdot)$  is well defined on the open
set $\R^2 \setminus \nabla v(\partial U)$. In fact, this function is
constant on each connected component $\{U_i\}_{i=0}^\infty$ of $\R^2
\setminus \nabla v(\partial U)$ and it equals $0$ on the only
unbounded component $U_0=\R^2\setminus\nabla v(\bar U)$. Thus, without
loss of generality, we may assume that $g$ is compactly supported and
that: $\mbox{supp } g\subset \bigcup_{k=1}^\infty U_k$. By compactness, there
must be: $\mbox{supp } g\subset \bigcup_{k=1}^N U_k$ for some $N$, and
consequently the integral in the right hand side of \eqref{area} is
well defined.

Let now $\{g_i\in\mathcal{C}_c^\infty(\bigcup_{k=1}^N
U_k)\}_{i=1}^\infty$ be a sequence pointwise converging to $g$ and
such that $\|g_i\|_0\leq \|g\|_{L^\infty}$ for all $i$. It is
sufficient to prove the formula
\eqref{area} for each $g_i$ and pass to the limit by  dominated convergence
theorem. To simplify the notation, we drop the index, and so in what
follows we are assuming that $g\in \mathcal{C}^\infty_c \big((\R^2\setminus \nabla
v(\partial U))\cap \nabla v(\bar U)\big)$. 

As in the proof of Theorem \ref{weakMA}, let $A\in W^{2,p}(\Omega)\cap
\mathcal{C}^{0,\beta}(\bar\Omega)$ be such that $\mbox{curl}\mbox{curl} A = -f$. 
Here, we take $\beta=\min\{2-\frac{2}{p}, \alpha\}\in (0,1)$.
%Indeed, if $p\geq 2$ the
%$A\in\mathcal{C}^{0,\beta}$ for all $\beta\in (0,1)$ and otherwise the
%critical H\"older regularity of $A$ is $\mathcal{C}^{0,2-2/p}\subset\mathcal{C}^{0,\alpha}$ since $p\geq
%  \frac{2}{2-\alpha}$.  
Consequently, in view of the simple connectedness of $\Omega$, there exists
  $w\in\mathcal{C}^{1,\beta}(\bar\Omega,\R^2)$ such that:
$$A = \frac 12  \nabla v\otimes \nabla v + \sym \nabla w. $$

For a standard 2d mollifier $\varphi \in \mathcal{C}_c^\infty(B(0,1))$
as in Lemma \ref{stima}, define:
$$ \forall l\in (0,1) \qquad v_{l} = v\ast \varphi_l, \quad w_{l} = w\ast \varphi_l, \quad
A_l= A\ast \varphi_l, $$ 
and apply the area formula (change of variable formula \cite{EG,
  AFP}) to the smooth functions $g$ and $\nabla v_l$:
\begin{equation}\label{ptak}
\int_U  (g \circ \nabla v_l) \det \nabla^2 v_l  = \int_{\R^2} g(y)
\deg(\nabla v_l, U, y)~\mbox{d}y.
\end{equation}
We see that $\nabla v_l$ converge uniformly to $\nabla v$, so 
the degrees converge pointwise \cite{Lloyd} and thus:
$$ \lim_{l\to 0} \int_{\R^2} g(y) \deg(\nabla v_l, U, y)~\mbox{d}y =
\int_{\R^2} g(y) \deg(\nabla v, U, y)~\mbox{d}y.$$
To conclude the proof in view of (\ref{ptak}), it suffices now to show that:
\bee\label{want}
\lim_{l\to 0} \int_U  (g \circ \nabla v_l) \det \nabla^2 v_l   = \int_U (g \circ \nabla v) f.
\eee 

\smallskip

{\bf 2.}
Following \cite{CDS, CoTi} we use a commutator estimate to
get (\ref{want}). As $f=-\mbox{curl }\mbox{curl} A$, we have:
\bee\label{zaza}
\begin{split}
\Big  |\int_U (g \circ \nabla v_l) &\det \nabla^2 v_l - (g
\circ \nabla v) f \Big |  \\  &
\leq  \Big | \int_U (g \circ \nabla v_l) \Big(\det \nabla^2 v_l + \mbox{curl }\mbox{curl} A_l
\Big)  \Big | + \Big | \int_U (g \circ \nabla v_l) \mbox{curl
}\mbox{curl} (A_l - A)  \Big | \\ & \qquad
+ \Big |  \int_U  \Big( (g \circ\nabla v_l)  -  (g \circ \nabla v)\Big) f \Big |.
\end{split}  
\eee
The second term above is bounded by $C\int_U |\nabla^2A_l - \nabla^2 A|
\leq C\|A_l - A\|_{W^{2,p}(\Omega)}$, hence it converges to $0$. The
third term also converges to $0$ by the dominated convergence theorem,
since $g\circ \nabla v_l$ converges to $g\circ \nabla v$. In order
to deal with the first term in (\ref{zaza}), observe that
$\det\nabla^2v_l = -  \mbox{curl }\mbox{curl} \big(\frac 12 \nabla
v_l\otimes\nabla v_l + \sym\nabla w_l\big)$ and integrate by parts, in
view of $g\circ \nabla v_l=0$ on $\partial U$:
\bee\label{zaza1}
\begin{split}
\Big  |\int_U (g \circ \nabla v_l) \big(&\det \nabla^2 v_l + \mbox{curl }\mbox{curl} A_l\big)
\Big |  \\ & =  \Big  |\int_U \Big\langle\nabla^\perp (g \circ \nabla
v_l), \mbox{curl} \big( \frac 12 \nabla v_l \otimes \nabla
 v_l + \sym \nabla w_l - A_l\big)\Big\rangle  \Big |  \\ & \leq
C \|\nabla g\|_0 \|\nabla^2v_l\|_0 \big\| \nabla v_l \otimes \nabla v_l  -  (\nabla v
\otimes \nabla v) \ast \varphi_l\big\|_1  \\ &
\leq C \frac{1}{l^{1-\alpha}}\|\nabla v\|_{0,\alpha}\cdot
\frac{1}{l^{1-2\alpha}} \|\nabla v\|_{0,\alpha}^2 = C \frac{1}{l^{2-3\alpha}}\|\nabla v\|_{0,\alpha}^3,
\end{split}   
\eee 
where we used  Lemma \ref{stima}.
Clearly, for $\alpha>2/3$ the right hand side in (\ref{zaza1})
converges to $0$ as $l\to 0$. By (\ref{zaza}), this implies
(\ref{want}) and concludes the proof.
\end{proof} 

\bigskip

Below, we present all the details of the proof of Theorem
\ref{rig1}. The proof of Theorem \ref{rig2} will be postponed to \cite{LPconvex}.

\bigskip

\noindent {\bf Proof of Theorem \ref{rig1}.}

\noindent 
{\bf 1.} By Proposition \ref{deg} it follows that for all  open sets
$U\subset\bar U\subset\Omega$:
\begin{equation}\label{kotek1}
\deg (\nabla v, U, y)=0 \qquad \forall y\in\mathbb{R}^2\setminus
\nabla v(\partial U). 
\end{equation}
We would like to conclude \cite{Po56, Po73} that the image set
$\nabla v(U)$ is of measure $0$. This will result in the
developability of $v$, by the main statement of \cite{Kor}.
However, we note that there exist a H\"older continuous vector field
whose local degree vanishes everywhere, but whose image is onto the
unit square \cite{LM-private}. Therefore, we will additionally exploit
the gradient structure of $\nabla v$, using ideas of  \cite[Chapter
2]{kirchthesis}, in combination with the commutator estimate
technique as in the proof of Proposition \ref{deg}.

Let $v_l = v\ast\varphi_l$ be as in the proof of Proposition \ref{deg}
and for every $\delta>0$ define:
$$ u_{l, \delta}(x_1, x_2) = \nabla v_l (x_1, x_2)  + \delta (-x_2,
x_1), \qquad u_{\delta}(x_1, x_2) = \nabla v (x_1, x_2)  + \delta (-x_2, x_1). $$  
Fix an open set $U$ with smooth boundary and compactly contained in
$\Omega$. Let $g\in\mathcal{C}_c^\infty\Big( (\R^2\setminus\nabla v(\partial
U))\cap\nabla v(\bar U)\Big)$, and use the change of variable formula
to $g$ and $u_{l,\delta}$:
\bee\label{deg-del1}
\int_U  (g \circ u_{l, \delta}) \big(\det \nabla^2 v_l + \delta^2\big)
= \int_{\R^2} g(y) \deg(u_{l, \delta}, U, y)~\mbox{d}y,
\eee
where we noted that $\det \nabla u_{l, \delta} = \det \nabla^2 v_l + \delta^2$.  
The integral in the right hand side of (\ref{deg-del1}) is well
defined for sufficiently small $l$ and $\delta$, because then
$y\in\mbox{supp } g$ implies $y\not\in u_{l,\delta}(\partial U)$.

Passing to the limit, we immediately obtain:
\bee\label{deg-del2}
\lim_{l\to 0} \int_{\R^2} g(y) \deg(u_{l, \delta}, U, y)~\mbox{d}y =
\int_{\R^2} g(y) \deg(u_{\delta}, U, y)~\mbox{d}y,
\eee 
while the left hand side of (\ref{deg-del1}) can be estimated by:
\bees
\Big  |\int_U  (g \circ u_{l, \delta}) \big(\det \nabla^2 v_l + \delta^2\big) - (g
\circ u_\delta) \delta^2 \Big | 
\leq  \Big  |\int_U  (g \circ u_{l, \delta}) \det \nabla^2 v_l \Big | 
+ \Big | \int_U  (g \circ u_{l, \delta} - g \circ u_\delta) \delta^2 \Big |.
\eees
The second term above clearly converges to $0$ as $l\to 0$, because
$u_{l,\delta}$ converge to $u_\delta$. The first term also converges
to $0$ as $\alpha >2/3$, where we reason exactly as in (\ref{zaza})
and (\ref{zaza1}), keeping in mind that $f=0$. We hence conclude:
$$ \lim_{l\to 0} \int_U  (g \circ u_{l, \delta}) \big(\det \nabla^2
v_l + \delta^2\big)  = \int_U (g \circ u_\delta) \delta^2. $$ 
In view of (\ref{deg-del2}) and (\ref{deg-del1}) this implies:
$$ \forall 0<\delta\ll 1\qquad \int_U  (g \circ u_{\delta}) \delta^2
= \int_{\R^2} g(y) \deg(u_{\delta}, U, y)~\mbox{d}y. $$
Consequently:
\begin{equation}\label{kotek}
\forall 0<\delta\ll 1 \quad \forall y\in u_\delta(U) \setminus  u_\delta(\partial U)
\qquad \deg(u_\delta, U, y) \geq 1.
\end{equation}

\smallskip

{\bf 2.} We now claim that:
\begin{equation}\label{mysz}
\nabla v(U) \subset \nabla v(\partial U).
\end{equation} 
To prove (\ref{mysz}) we argue by contradiction, assuming that for some $x_0\in U$ there
is: $y_0= \nabla v(x_0) \in \nabla v(U) \setminus  \nabla v (\partial U)$. 
We distinguish two cases:
\begin{itemize}
\item[(i)] There exist sequences $\{x_k \in U\}_{k=1}^\infty$ and
  $\delta_k\to 0^+$ as $k\to\infty$ such that $y_0 =
  u_{\delta_k}(x_k)$ for all $k$. Note that for large $k$, there must
  be $y_0\not\in u_{\delta_k} (\partial U)$ because $u_\delta$
  converges to $\nabla v$ as $\delta\to 0$ and $y_0\not\in \nabla
  v(\partial U)$. In view of (\ref{kotek}) we get: $\deg(u_{\delta_k},
  U, y_0)\geq 1$ contradicting (\ref{kotek1}).

\item[(ii)]
For all $\delta$ small enough, $y_0 \not\in u_\delta (U)$. Thus $\deg
(u_\delta, U, y_0) =0$, so by (\ref{kotek}) we get $y_0\in u_\delta
(\partial U) \subset\nabla v(\partial U) + CB(0, \delta)$,
contradicting that $y_0\not\in\nabla v(\partial U)$.
\end{itemize}

Our claim (\ref{mysz}) is now established. Since the set $\nabla
v(\partial U)$ is the image of a Hausdorff $1$d set $\partial U$ under
a $\mathcal{C}^{0,1/2}$ deformation $\nabla v$, it has Lebesgue
measure $0$ (see \cite[Lemma 4]{CDS}). Thus $\nabla v(U)$
must have measure $0$ for every smooth $U$ compactly contained in $\Omega$. The
same then must be true for the entire set $\Omega$, i.e.: $|\nabla
v(\Omega)| = 0$ and we consequently obtain:
\begin{equation}\label{kori}
\mbox{Int}\big(\nabla v(\Omega)\big) = \emptyset.
\end{equation}

\smallskip

{\bf 3.} By \cite[Corollary 1.1.2.]{Kor2}, condition (\ref{kori})
implies that every point $y\in\Omega$ has a convex open
neighbourhood $\Omega_{y}$ such that for every point
$x\in\Omega_{y}$ there is a line $L_x$ passing through $x$ so that
$\nabla v$ is constant on $L_x\cap\Omega_{y}$. The same result in the
present dimensionality has been first established in \cite{Kor}, see
also footnote on pg. 875 in \cite{Kor2} for an explanation.

We now prove that $v$ is developable. Fix $x_0\in\Omega$ and let
$[y,z]\subset \bar\Omega$ be the maximal segment passing through $x_0$
on which $\nabla v = \nabla v(x_0)$ is constant. Assume that $[y,z]$ does not extend to the boundary
$\partial\Omega$, i.e. $y\in\Omega$. We will prove that then
$\nabla v$ must be constant in an open neighbourhood of $x_0$. In
fact, we will show that:
\begin{equation}\label{setV}
V = \mbox{Int} \Big((\nabla v)^{-1} \big(\nabla v(x_0)\big)\Big)
\supset (y, z).
\end{equation}
Let $(p,q) = L_y\cap \Omega_y$. By the maximality of $[y, z]$, the segment 
$(p,q)$ is not an extension of (is not parallel to)  $[y,
z]$. Also, $\nabla v=\nabla v(x_0)$ on $(p,q)$. Take any $y_1\in
(y,z)\cap \Omega_y$ and define the open triangle
$T=\mbox{Int} \big(\mbox{span}\{p,q,y_1\}\big)$. It is easy to notice that every line
passing through any point $x\in T$ must intersect at least one of the
segments $(p,q)$ or $(y,y_1)$. Since $T\subset \Omega_y$, it follows
that  $\nabla v(x) = \nabla v(x_0)$. Hence:
$$(y, y_1)\subset T\subset V$$
and, in particular, the set $V$ in (\ref{setV}) is nonempty.

To prove (\ref{setV}) assume, by contradiction, that there exists
$y_2\in [y_1, z)$ so that:
\begin{equation}\label{conio}
(y, y_2)\subset V \quad \mbox{ but } \quad (y, y_3)\not\subset V \quad
\forall y_3\in (y_2, z).
\end{equation}
Now, the intersection $\Omega_{y_2}\cap V$ contains an open arc $C$
around the point $ (y, y_2)\cap\Omega_{y_2}$. As above, we argue that
every point in a sufficienty small open neighbourhood of the segment
$I=(y, z)\cap \Omega_{y_2}$ must have the property that every line
passing through it intersects $C$ or $I$, where $\nabla v = \nabla
v(x_0)$. Consequently $I\subset V$, contradicting (\ref{conio}) and
establishing (\ref{setV}).
%Korobkov claims that Pogorelov's result  
%\cite[provide exact references]{Po 56, Po 73} is enough here.
%Korobkov's sufficient condition is that $\nabla
%v(\Omega)$ is nowhere dense. This can come handy for considering
%$\alpha\le 2/3$. I believe we need to check the proofs of Pogorelov and
%Korobkov to push forward.
\endproof


\begin{thebibliography}{9999}
 
\bibitem{Adams} R. Adams and J. Fournier, \textit{Sobolev Spaces},
  2nd Edition, Academic Press 2003.

\bibitem{AFP} L. Ambrosio, N. Fusco and D. Pallara, \textit{Functions of Bounded Variation and
  Free Discontinuity Problems}, Oxford (2000).

\bibitem{bori1} Yu. F. Borisov, \textit{The parallel translation on a smooth surface. III.},
Vestnik Leningrad. Univ. {\bf 14} (1959) no. 1, 34--50.

\bibitem{bori2} Yu. F. Borisov,
\textit{Irregular surfaces of the class $\mathcal{C}^{1,\beta}$ with an analytic metric},
 (Russian) Sibirsk. Mat. Zh. {\bf 45} (2004), no. 1, 25--61; 
translation in Siberian Math. J. {\bf 45} (2004), no. 1, 19--52.

\bibitem{B1} T. Buckmaster, C. De Lellis, P. Isett, and L. Szekelyhidi, Jr., \textit{Anomalous dissipation for
$1/5$-H\"older Euler flows},  Annals of Mathematics, 2015.

\bibitem{B2} T. Buckmaster, C. De Lellis, and L. Szekelyhidi, Jr., \textit{Transporting microstructures and
dissipative Euler flows}, to appear.

\bibitem{B3} T. Buckmaster, C. De Lellis and L. Szekelyhidi Jr., \textit{Dissipative Euler
flows with Onsager-critical spatial regularity}, Communications on
Pure and Applied Mathematics, 2015.

%\bibitem{ChL}  S.S. Chern and R.K. Lashof, \textit{On the
%Total Curvature of Immersed Manifolds}, 
%American Journal of Mathematics,  {\bf 79}, (1957) No. 2., 306--318. 

\bibitem{C2} A. Choffrut and L. Szekelyhidi, Jr., \textit{Weak
    solutions to the stationary incompressible Euler equations}, to appear in SIAM Journal of Mathematical Analysis.

\bibitem{CoTi} P. Constantin, W. E and E. S. Titi,
  \textit{Onsager's conjecture on the energy conservation for
    solutions of Euler's equation}, Comm. Math. Phys. {\bf 165}
  (1994), no. 1, 207--209. 

\bibitem{CDS} S. Conti, C. De Lellis and L. Sz\'ekelyhidi Jr., \textit{$h$-principle
  and rigidity for $\mathcal{C}^{1,\alpha}$ isometric embeddings}, 
Proceedings of the Abel Symposium 2010.

\bibitem{1/5} C. De Lellis, D. Inauen and L. Sz\'ekelyhidi Jr., \textit{
A Nash-Kuiper theorem for $C^{1,\frac{1}{5}-\delta}$ immersions of
surfaces in $3$ dimensions}, to appear.

\bibitem{DS0} C. De Lellis and  L. Sz\'ekelyhidi Jr., \textit{The Euler equations as a differential inclusion},
Ann. of Math. (2) {\bf 170} (2009), no. 3, 1417--1436.

\bibitem{DS} C. De Lellis and  L. Sz\'ekelyhidi Jr.,
  \textit{Dissipative continuous Euler flows},  Invent. Math. {\bf 193} (2013), no. 2, 377--407.  

\bibitem{EG} L.C. Evans and R. Gariepy, \textit{Functions of Bounded Variation and
    Free Discontinuity Problems}, CRC Press (1991).
  
\bibitem{eyink} G.L. Eyink, \textit{Energy dissipation without viscosity in ideal hydrodynamics I. 
Fourier analysis and local energy transfer}, Physica D: Nonlinear Phenomena
{\bf 78}, Issues 3-4, (1994), Pages 222--240. 
  
\bibitem{FM} I. Fonseca and J. Mal\'y, \textit{From Jacobian to Hessian: distributional form and relaxation},  
Riv. Mat. Univ. Parma, {\bf 7},  (2005), 4* (2005) 45--74.

%\bibitem{FJMhier} G. Friesecke, R. James and S. M\"uller, \textit{A hierarchy 
%of plate models derived from nonlinear elasticity by gamma-convergence}, 
%Arch. Ration. Mech. Anal.,  {\bf 180} ,  (2006), no. 2, 183--236.

%\bibitem{fu1}  J.H.G. Fu, \textit{Monge-Amp\`ere functions I}, 
%{Indiana Univ. Math. Jour.}, {\bf 38}, (1989), no. 3, 745--771.

\bibitem{gromov} M. Gromov, \textit{Partial differential relations},
  Ergebnisse der Mathematik und ihrer Grenzgebiete (3)  [Results in
  Mathematics and Related Areas (3)], 9. Springer-Verlag, Berlin,
  1986. 

%\bibitem{HartmanNirenberg} P. Hartman and L. Nirenberg,
%\textit{On spherical image maps whose Jacobians do not change sign}, {Amer. J. Math.,}
%{\bf 81} (1959), 901--920.

%\bibitem{Ho0} P. Hornung, \textit{Approximating $W^{2,2}$ isometric immersions},
%C. R. Math. Acad. Sci. Paris, {\bf 346} (2008), no. 3-4, 189--192.
%
%\bibitem{Ho1} P. Hornung, \textit{Fine level set structure of flat isometric immersions},
%Arch. Rational Mech. Anal., {\bf 199} (2011), 943--1014.
%
%\bibitem{Ho2} P. Hornung, \textit{Approximation of flat $W^{2,2}$ isometric immersions
%by smooth ones},
%Arch. Rational Mech. Anal., {\bf 199} (2011), 1015--1067.

\bibitem{Is1} P. Isett, \textit{H\"older continuous Euler flows in three
    dimensions with compact support in time}, to appear.

\bibitem{Is2} P. Isett, \textit{Regularity in time along the coarse scale flow for the
 Euler equations}, to appear.

\bibitem{IV} P. Isett and V. Vicol, \textit{H\"older continuous
    solutions of active scalar equations}, to appear.

\bibitem{Iwa} T. Iwaniec, \textit{On the concept of weak Jacobian and Hessian}, Report Univ. Jyv\"askyl\"a {\bf 83} (2001),  181--205.

\bibitem{Je1} R.L. Jerrard, \textit{Some remarks on Monge-Amp\`ere
    functions}, Singularities in PDE and the calculus of variations,  
CRM Proc. Lecture Notes, {\bf 44}, (2008), 89--112. 

\bibitem{J2010} R.L. Jerrard, \textit{Some rigidity results related to
    Monge-Amp\`ere functions}, Canad. J. Math. {\bf 62}, no. 2,
  (2010), 320--354. 

\bibitem{JP} R.L. Jerrard and M.R. Pakzad, \textit{Sobolev spaces of
    isometric immersions of arbitrary dimension and co-dimension},
  Preprint (2014), {\tt http://arxiv.org/pdf/1405.4765v3.pdf}

\bibitem{PM1} S. Kim and B. Yan, \textit{Convex integration and
    infinitely many weak solutions to the Perona-Malik equation in all
    dimensions}, to appear.

\bibitem{PM2} S. Kim and B. Yan, \textit{Radial weak solutions for the
    Perona-Malik equation as a differential inclusion}, to appear.

\bibitem{kirchthesis} B. Kirchheim,
\textit{Geometry and {R}igidity of {M}icrostructures.}
Habilitation Thesis, Leip\-zig, (2001), Zbl pre01794210.

\bibitem{Kor} M.V. Korobkov, \textit{Properties of the $\mathcal{C}^1$-smooth
    functions with nowhere dense gradient range},   
Sibirsk. Mat. Zh. {\bf 48}  no. 6,  (2007),  1272--1284; translation
in Siberian Math. J. {\bf 48}, no. 6,  (2007), 1019--1028    

\bibitem{Kor2} M.V. Korobkov, \textit{Properties of $\mathcal{C}^1$-smooth
    mappings with one-dimensional gradient range}
Sibirsk. Mat. Zh. {\bf 50}  no. 5,  (2009),  874--886; translation
in Siberian Math. J. {\bf 50}, no. 5,  (2009), 874--886.  

%\bibitem{kirchhoff}
%G.~Kirchhoff. \textit{{\"U}ber das gleichgewicht und die bewegung einer elastischen
  %scheibe},
% \em {J}. {R}eine {A}ngew. {M}ath.},  (1850), 51--88.

\bibitem{kuiper} N.H. Kuiper, \textit{On $\mathcal{C}^1$-isometric
    imbeddings. {I}, {II}.},  Nederl. Akad. Wetensch. Proc. Ser. A. {\bf 58},   
(1955) 545--556, 683--689. 

\bibitem{LMP} M. Lewicka, L. Mahadevan and M.R. Pakzad,
\textit{The Monge-Amp\`ere constraint: matching of isometries, density
  and regularity and elastic theories of shallow shells}, to appear in
Annales de l'Institut Henri Poincare (C) Non Linear Analysis.

\bibitem{LPconvex} M. Lewicka and M.R. Pakzad,
\textit{Rigidity and convexity  of the very weak solutions to the Monge-Amp\`ere
  equation}, in preparation.

%\bibitem{lepa_noneuclid} M. Lewicka and M. Pakzad,
  %  \textit{Scaling laws for non-Euclidean plates and the $W^{2,2}$
   %   isometric  immersions of Riemannian metrics}, ESAIM:
   % Control, Optimisation and Calculus of Variations (2010).

\bibitem{LM-private} Z. Liu and J. Mal\'y, private communication. 

\bibitem{LP} Z. Liu and  M.R. Pakzad, \textit{Rigidity and regularity
    of co-dimension one Sobolev isometric immersions},  
to appear in Ann. Scuola Norm. Sup. Pisa Cl. Sci. (5).

\bibitem{Lloyd} N. Lloyd, \textit{Degree theory}, Cambridge University
  Press (1978).

%\bibitem{MalyMartio} {J. Mal\'y and O. Martio}, 
%\textit{Lusin's condition (N) and mappings of the class $W^{1,n}$,}
%\newblock {\em J. Reine Angew. Math.}, {\bf 458}, (1995), 19--36.
 
%\bibitem{MuPa2} S. M\"uller and M.R. Pakzad, \textit
%{Regularity properties of isometric immersions},  Math. Z., {\bf 251} (2005), no. 2, 313--331  .

\bibitem{Nash1} J. Nash, \textit{The imbedding problem for Riemannian
    manifolds}, Ann. Math., {\bf 63}, (1956), 20--63.  

\bibitem{Nash2} J. Nash, \textit{$\mathcal{C}^1$ isometric imbeddings}, Ann. Math., {\bf 60}, (1954), 383--396. 

\bibitem{Pak} M.R. Pakzad, \textit{On the Sobolev space of isometric immersions},
J. Differential Geom., {\bf 66},  (2004) no. 1, 47--69.

\bibitem{Po56} A.V. Pogorelov, \textit{Surfaces with bounded extrinsic curvature} (Russian), Kharhov, 1956.

\bibitem{Po73} A.V. Pogorelov, \textit{Extrinsic geometry of convex surfaces}, Translation of Mathematical
Monographs vol. 35, American Math. Soc., 1973.
 
\bibitem{Sve} V. \v{S}ver\'ak, \textit{On regularity for the Monge-Amp\`ere equation
without convexity assumptions}, preprint,  Heriot-Watt University (1991).

%\bibitem{} M. Spivak, \textit{A Comprehensive Introduction to Differential 
%Geometry, Vol V}, 2nd edition, Publish or Perish Inc., 1979.

\bibitem{Sz} L. Sz\'ekelyhidi Jr., \textit{From isometric imbeddings
    to turbulence} Max-Planck-Institut Lecture Notes (2013).

%\bibitem{TW} N.S. Trudinger and X.J. Wang, \textit{The Monge-Amp\`ere
  % equation and its geometric applications}, Handbook of Geometric
 % Analysis, International Press,  Vol I, 467-524, (2008).

 

\end{thebibliography}
\end{document}